\definecolor{red}{rgb}{1,0,0}
\definecolor{blue}{rgb}{0,0,1}
\definecolor{green}{rgb}{0,.6,0}
\newtheorem{thm}{Theorem}[section]
\newtheorem{cor}[thm]{Corollary}
\newtheorem{lem}[thm]{Lemma}
\newtheorem{prop}[thm]{Proposition}
\newtheorem{obs}[thm]{Observation}
\newcommand{\throt}{\operatorname{th}}
\newcommand{\throtplus}{\operatorname{th_+}}
\newcommand{\thp}[1]{\throtplus(#1)} 
\newcommand{\pt}{\operatorname{pt}}
\newcommand{\ptp}{\operatorname{pt_+}}
\newcommand{\ptpm}[2]{\ptp(#1 ; #2)}
\theoremstyle{definition}
\newtheorem{rem}[thm]{Remark}
\theoremstyle{definition}
\newtheorem{defn}[thm]{Definition}
\theoremstyle{definition}
\newtheorem{ex}[thm]{Example}
\newcommand{\G}{\mathcal{G}}
\newcommand{\F}{\mathcal{F}}
\newcommand{\Z}{\operatorname{Z}}
\newcommand{\ecc}{\operatorname{ecc}}
\newcommand{\diam}[1]{\operatorname{diam}\left(#1\right)}
\newcommand{\dist}{\operatorname{dist}}
\newcommand{\bit}{\begin{itemize}}
\newcommand{\eit}{\end{itemize}}
\newcommand{\ben}{\begin{enumerate}}
\newcommand{\een}{\end{enumerate}}
\newcommand{\beq}{\begin{equation}}
\newcommand{\eeq}{\end{equation}}
\newcommand{\bea}{\begin{eqnarray*}} % * means no number
\newcommand{\eea}{\end{eqnarray*}}
\newcommand{\bpf}{\begin{proof}}
\newcommand{\epf}{\end{proof}\ms}
\newcommand{\bmt}{\begin{bmatrix}}
\newcommand{\emt}{\end{bmatrix}}
\newcommand{\ms}{\medskip}
\newcommand{\cp}{\, \Box\,}
\newcommand{\lc}{\left\lceil}
\newcommand{\rc}{\right\rceil}
\newcommand{\lf}{\left\lfloor}
\newcommand{\rf}{\right\rfloor}
\newcommand{\noi}{\noindent}
\newcommand{\ceil}[1]{\lc #1 \rc}
\newcommand{\OL}{\overline}
\title{Throttling positive semidefinite zero forcing propagation time on graphs}
\author{Joshua Carlson\thanks{Department of Mathematics, Iowa State University, Ames, IA 50011, USA, %jmsdg7@iastate.edu, hogben@iastate.edu, jkritsch@iastate.edu, lorenkj@iastate.edu, msross@iastate.edu, sselken@iastate.edu, vvalle@iastate.edu
(jmsdg7, hogben, jkritsch, lorenkj, msross, sselken, vvalle)@iastate.edu.} \and Leslie Hogben\footnotemark[1]\ \thanks{American Institute of Mathematics, 600 E. Brokaw Road, San Jose, CA 95112, USA, hogben@aimath.org.} \and J\"urgen Kritschgau\footnotemark[1] \and Kate Lorenzen\footnotemark[1] \and Michael S. Ross\footnotemark[1] \and Seth Selken\footnotemark[1] \and Vicente Valle Martinez\footnotemark[1]}
\date{June 1, 2018}
\begin{document}
\maketitle
%\linenumbers

\vspace{-25pt}\begin{abstract} 
Zero forcing is a process on a graph  that colors vertices blue by starting with some of the vertices blue and applying a color change rule.
Throttling minimizes the sum of the size of the initial  blue vertex set and the number of the time steps needed to color the graph.  We study throttling for positive semidefinite zero forcing.
We establish a tight lower bound on the positive semidefinite throttling number as a function of the order, maximum degree, and positive semidefinite zero forcing number of the graph, and 
determine the positive semidefinite throttling numbers of paths, cycles, and full binary trees.  We characterize the graphs that have extreme  positive semidefinite throttling numbers.  \end{abstract}

\noi {\bf Keywords} Zero forcing, propagation time, throttling, positive semidefinite

\noi{\bf AMS subject classification} 05C57, 05C15, 05C50

%%%%%%%%%%%%%%%%%%%%%%%%%%%%%%%%%%%%%%%%%%%
\section{Introduction}\label{sintro}

Consider a process on a graph  wherein the vertices are colored either blue or white, and we repeatedly apply a color change rule that can change the color of a white vertex to blue but not vice versa. Natural questions arise such as the final state of the graph after this process and the time needed for this process to end. %, or the number of blue vertices needed to reach a given state. 
Butler and Young \cite{BY13throttling} studied  the relationship between the size of the initial set colored blue and the number of  time steps taken to color the entire graph. Motivating applications include studying the spread of information on a graph \cite{BY13throttling}, graph searching \cite{yang2012fast}, and control of quantum systems \cite{graphinfect, Sev}.

Throughout this paper we consider only simple (no loops or multiple edges) undirected finite graphs $G=(V(G),E(G))$.
The \textit{standard color change rule} consists of changing the color of a white vertex $w$ to blue  when $w$ is the only white neighbor of a blue  vertex $v$.  We then say that $v$ forces $w$. %and write $v\rightarrow w$. 
A subset $S$ of the vertices initially colored blue that can eventually force all vertices of $G$ %under the color change rule 
is called a \textit{standard zero forcing set}. 
The minimum cardinality of a standard zero forcing set for $G$ is the {\em standard zero forcing number} of $G$ and is denoted by $\Z(G)$ \cite{AIM08}.  
The number of time steps required for this process to color all vertices blue (performing all possible forces at each  step) is the \textit{standard propagation time} of a set $S$, denoted by $\pt(G,S)$ \cite{proptime}. %; in this process all forces that are possible based on the blue vertices after time step $t$  are performed simultaneously at time step $t+1$. 
Whenever $S$ is not a standard zero forcing set we let $\pt(G,S)=\infty$.
Note that each vertex forces at most one of its neighbors under the standard color change rule.
Starting at a blue vertex $v_1$, a sequence of forces $v_i\to v_{i+1}$ for $i=1,\dots,k-1$ implies the path $(v_1,\dots, v_k)$ is an induced path in $G$. Such a path is called a \textit{forcing chain}; whenever we refer to a forcing chain, we assume it is maximal. 

We study positive semidefinite  (PSD) throttling using positive semidefinite zero forcing, introduced in \cite{smallparam}.
Let $W_1,...,W_k$ be the sets of white vertices corresponding to the connected components of $G-S$, where $S$ is a set of blue vertices (it is possible that $k=1$). %Let $v$ in $S$ and $w$ in $W_i$. 
The \textit{positive semidefinite color change rule} consists of coloring $w_i\in W_i$ blue when $w_i$ is the only white neighbor of $v$ in $G[W_i \cup S]$, the subgraph of G induced by $W_i \cup S$. We then say that $v$ forces $w$ and write $v\rightarrow w$.  
A subset $S$ of the vertices initially colored blue that can eventually force all vertices of $G$ under the positive semidefinite color change rule is called a \textit{positive semidefinite zero forcing set}. 
The minimum cardinality of a positive semidefinite zero forcing set for $G$ is the {\em positive semidefinite zero forcing number} of $G$ and is denoted by $\Z_+(G)$.  
The number of time steps required for this process to color all vertices is the \textit{positive semidefinite propagation time} of set $S$, denoted by $\ptp(G;S)$;  if $S$ is not a positive semidefinite zero forcing set, then $\ptp(G;S)=\infty$.   The \textit{positive semidefinite propagation time} of graph $G$ is $\ptp(G)=\min\{\ptp(G;S)\!:\! G\mbox{ is a minimum PDS zero forcing set of }G\}$ \cite{PSDpropTime}.  
Given a graph $G$,  a positive semidefinite zero forcing set $S$, a list of forces $\F$, and a vertex  $x\in S$, define $V_x$ to be the set of vertices  $w$ such that  there is a sequence of forces  $x=v_1\to v_2\to\dots\to v_k=w$ in $\F$ (the empty sequence of forces is permitted, i.e., $x\in V_x$).
The {\em forcing tree $T_x$}  is the induced subgraph $T_x=G[V_x]$.  Note that for a given positive semidefinite zero forcing set $S$, there are usually choices to be made in selecting $\F$, and these choices affect the forcing tree $T_x$.  Whenever we refer to a forcing tree, we assume it is maximal.

 In \cite{BY13throttling} Butler and Young  define $\throt(G,S)=|S|+\pt(G,S)$ for  $S\subseteq V(G)$. The \textit{throttling number} of $G$ is $\throt(G)=\min \{ \throt(G,S)\!:\! S\text{ is a zero forcing set}\}$. 
At the AIM workshop {\em Zero forcing and its applications} \cite{AIM-ZF}, one of the problems posed was to study throttling numbers of variants of standard zero forcing.  We address this question for positive definite zero forcing by defining $\throtplus(G;S)=|S| + \ptp(G;S)$ and the \textit{positive semidefinite throttling number} of a graph $G$   as  %\vspace{-3pt}
\[ \throtplus(G) = \text{min} \left\{ \throtplus(G;S)\!  :  S \text{ is a PSD zero forcing set}    \right\}\!.%\vspace{-3pt} 
\]

We develop positive semidefinite analogs of many of the results in \cite{BY13throttling}, although in many case the results are strikingly different.  
  In Section \ref{bounds} we obtain a lower bound on the positive semidefinite throttling number. Unlike the case of standard throttling, where $\throt(G)\ge 2\sqrt{n} - 1$  for a graph $G$ of order $n$ \cite{BY13throttling}, the maximum degree plays a critical role in the lower bound  for positive semidefinite throttling.  Positive semidefinite throttling on a graph having maximum degree two behaves  like standard throttling (with the lower bound smaller by a factor of $\sqrt 2$, see Proposition \ref{d2bound}), whereas for a graph having maximum degree at least three the lower bound is logarithmic in the order  (see Theorem \ref{thmDeltaGreater2}). The positive semidefinite throttling numbers of paths and cycles are determined in Section \ref{pathsAndCycles}.   Section \ref{trees} contains results on positive semidefinite throttling numbers of trees, and we present a family of trees that shows the lower bound in Theorem \ref{thmDeltaGreater2} is tight.  In Section \ref{extreme} we characterize the graphs that have extreme (very low and very high) positive semidefinite throttling numbers.  In Section \ref{sweight} we discuss weighted positive semidefinite throttling, where a linear combination of $|S|$ and $\ptp(G;S)$ is minimized.  In some cases where we obtain results for which the standard throttling analog has not been done, we establish the standard throttling analogs in Section \ref{StndTh}.  This includes the determination of throttling numbers of cycles and some results for trees. %, and some results for graphs with extreme throttling numbers.  
The remainder of this introduction contains additional definitions and notation.

For $W\subseteq V(G)$, define the {\em complement}  $\overline W = V(G)\setminus W.$   Vertices $v$ and $u$ are {\em adjacent} (or are {\em neighbors}) if $\{u,v\}\in E(G)$; this relationship can be denoted by $v\sim u$. The {\em neighborhood} of $u$ is $N_G(u)=\{v\in V(G)\!:\! v\sim u\}$; when the graph $G$ is clear we write $N(v)$.    A set $W$ of vertices of $G$ is {\em  independent} if no vertex in $W$ is  adjacent to any other vertex of $W.$ The size of the largest independent set is called the \textit{independence number} of $G$ and is denoted by $\alpha(G)$. The maximum and minimum degree of vertices in $G$ are denoted by $\Delta(G)$ and $\delta(G)$, respectively.  
A path (respectively, cycle, complete graph, complete bipartite graph) of order $n$ is denoted by $P_n$ (respectively, $C_n, K_n, K_{p,q}$).  The {\em length} of a path is the number of edges in the path.

The {\em distance} between vertices $u$ and $v$, denoted by $\dist(u,v)$, is the length of the shortest path between $u$ and $v$. The {\em distance from a set $U\subseteq V(G)$ to a vertex $v$} is defined as $\dist(U,v)=\min\{\dist(u,v)\!:\! u \in U\}$. For $U, W \subseteq V(G)$, the {\em distance from $U$ to $W$}  is  $\dist(U\to W)=\max\{ \dist(U,w) \!:\! w\in W\}$.  The definition of the distance from $U$ to $W$ is for use  with zero forcing and follows \cite{PSDpropTime}, although we have changed the notation to emphasize that in general $\dist(U\to W)\ne \dist(W\to U)$. The {\em eccentricity} of vertex $u$ is $\ecc(u)=\max\{\dist(u,v)\!:\! v\in V(G)\}$.  The {\em diameter} of a graph $G$ is $\diam G=\max\{\ecc(v) \!:\! v\in V(G)\}$.   The center of a graph is the set of all vertices $u$ such that $\ecc(u)\le \ecc(v)$ for all $v\in V(G)$.   %the greatest distance to any other vertex $v$ is minimal. 
We say a vertex is  a center vertex if it belongs to the center of the graph.

%If $S$ is a PSD zero forcing set, then $\dist(S,\overline S)$ denotes the distance between its farthest white vertex and its closest blue vertex. 
Given a starting set of blue vertices $S=S_0$,  $S_t$ denotes the set of blue vertices after time step $t$, and   $S^{(t)}$ denotes the set of vertices that turn blue at time step $t$. %, with the convention $S_0=S$.
%We denote by $s_+(G)$ the set $\{S\subseteq V(T) \!:\! \thp{G;S}=\thp{G}\}$ of subsets of vertices that achieve the PSD throttling number.

%%%%%%%%%%%%%%%%%%%%%%%%%%%%%%%%
\section{Positive semidefinite throttling bounds}\label{bounds}

In \cite{BY13throttling}, Butler and Young constructed a zero forcing set $S$ on a  path $P_n$ such that $\pt(P_n,S) + |S|= \lc2\sqrt{n} - 1\rc$.  They showed $\throt(P_n)=\lc2\sqrt{n} - 1\rc$  by using the lower bound \vspace{-3pt}
\[
\throt(G) \geq 2\sqrt{n} - 1  \vspace{-3pt}
\]
for all graphs $G$ of order $n$.  The lower bound was obtained by minimizing $\pt(G,S) + |S|$ subject to the constraint 
\beq|S|\cdot (\pt(G,S) + 1) \geq n,\label{stdconst}\eeq which follows from the facts  that there are $|S|$ forcing chains and at each time step at most one force can take place in each forcing chain. %\cite{BY13throttling} .

We develop a related lower bound. % found in \cite{BY13throttling}. 
However,  \eqref{stdconst} does not hold for positive semidefinite throttling, because for positive semidefinite zero forcing we have forcing trees rather than forcing chains, and forcing trees can grow by more than one vertex in one time step. For example, consider the star on four vertices, $K_{1,3}$, and let $v$ be  the center. Then $S:=\{v\}$ is a positive semidefinite zero forcing set with propagation time equal to one. So 
$|S|\cdot (\ptp(K_{1,3};S) + 1) = 1\cdot(1+1) = 2 < 4 = n.$

\begin{obs}
For every graph $G$, $\throtplus(G) \leq \throt(G)$. 
\end{obs}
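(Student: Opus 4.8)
The plan is to observe that the minimum defining $\throtplus(G)$ ranges over a set of vertex subsets that includes every standard zero forcing set, and that on each such subset the positive semidefinite quantity is no larger than the standard one. First I would record the standard containment: every standard zero forcing set $S$ is a positive semidefinite zero forcing set. Indeed, if $v$ forces $w$ under the standard rule, then $w$ is the unique neighbor of $v$ in $G$ lying outside the current blue set; since any white neighbor of $v$ in an induced subgraph $G[W_i\cup S]$ is in particular a white neighbor of $v$ in $G$, the vertex $v$ has at most one white neighbor in each such induced subgraph, so the same force is legal under the positive semidefinite rule. In particular $\ptp(G;S)<\infty$ whenever $\pt(G,S)<\infty$.

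Next I would prove the step-by-step domination $\ptp(G;S)\le\pt(G,S)$ for every standard zero forcing set $S$. Run both processes from the same starting set $S$, and write $S_t^{\mathrm{std}}$ and $S_t^{\mathrm{psd}}$ for the blue sets after $t$ time steps under the standard and the positive semidefinite rules, respectively. I claim $S_t^{\mathrm{std}}\subseteq S_t^{\mathrm{psd}}$ for all $t$, by induction on $t$; the base case $t=0$ is equality. For the inductive step, suppose $w$ turns blue at step $t+1$ under the standard rule, say $v$ forces $w$. Then $v\in S_t^{\mathrm{std}}\subseteq S_t^{\mathrm{psd}}$, and $w$ is the only neighbor of $v$ in $G$ outside $S_t^{\mathrm{std}}$; by the inductive hypothesis $w$ is the only possible neighbor of $v$ outside $S_t^{\mathrm{psd}}$. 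If $w\in S_t^{\mathrm{psd}}$ we are done; otherwise $w$ lies in some component $W_j$ of $G-S_t^{\mathrm{psd}}$, and within $G[W_j\cup S_t^{\mathrm{psd}}]$ the vertex $v$ has $w$ as its unique white neighbor, so $v$ forces $w$ and $w\in S_{t+1}^{\mathrm{psd}}$. Hence $S_{t+1}^{\mathrm{std}}\subseteq S_{t+1}^{\mathrm{psd}}$, and in particular the positive semidefinite process has colored all of $V(G)$ by time $\pt(G,S)$, giving $\ptp(G;S)\le\pt(G,S)$.

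Finally I would assemble the pieces: for every standard zero forcing set $S$,
\[
\throtplus(G;S)=|S|+\ptp(G;S)\le |S|+\pt(G,S)=\throt(G,S),
\]
and since $S$ is also a positive semidefinite zero forcing set, $\throtplus(G)\le\throtplus(G;S)$. Choosing $S$ to be a standard zero forcing set attaining $\throt(G)$ yields $\throtplus(G)\le\throt(G)$. I do not anticipate a serious obstacle; the only point requiring care is the inductive claim $S_t^{\mathrm{std}}\subseteq S_t^{\mathrm{psd}}$, and there the key observation is simply that passing from $G$ to an induced subgraph $G[W_i\cup S]$ can only shrink the white neighborhood of a vertex, so a legal standard force is never lost.
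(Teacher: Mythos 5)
Your argument is correct and is precisely the justification the paper leaves implicit by stating this as an unproved Observation: every standard zero forcing set is a positive semidefinite zero forcing set, and the monotone coupling $S_t^{\mathrm{std}}\subseteq S_t^{\mathrm{psd}}$ gives $\ptp(G;S)\le\pt(G,S)$, from which the inequality on throttling numbers follows by minimizing over the larger collection of sets. No gaps; the one point needing care (that restricting to an induced subgraph $G[W_i\cup S]$ only shrinks white neighborhoods, so a legal standard force remains legal) is exactly the point you verified.
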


\begin{obs}
If $\Delta(G)=0$, then the only positive semidefinite zero forcing set is $V(G)$, so $\throtplus(G)=n$.
\end{obs}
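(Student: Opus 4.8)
The plan is to observe that the hypothesis $\Delta(G)=0$ pins down $G$ completely and makes the positive semidefinite color change rule inapplicable. First I would note that $\Delta(G)=0$ means $E(G)=\emptyset$, so $N_G(v)=\emptyset$ for every $v\in V(G)$; in other words $G$ is the edgeless graph on $n$ vertices.

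Next I would unwind the definition of the positive semidefinite color change rule for an arbitrary blue set $S$ with $S\neq V(G)$. Writing $W_1,\dots,W_k$ for the white vertex sets of the connected components of $G-S$, a white vertex $w_i\in W_i$ can be forced blue only if it is the unique white neighbor of some $v$ in $G[W_i\cup S]$. Since $G$ has no edges, no vertex has any neighbor in any induced subgraph, so this condition is never satisfied. Hence no force is ever performed, $S_t=S$ for all $t\ge 0$, and $S$ fails to be a positive semidefinite zero forcing set unless $S=V(G)$.

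Therefore the only positive semidefinite zero forcing set is $V(G)$ itself, and for it $\ptp(G;V(G))=0$ since all vertices are blue at time $0$. Consequently $\throtplus(G)=|V(G)|+\ptp(G;V(G))=n+0=n$, as claimed. There is essentially no obstacle here; the only point requiring a moment's care is the degenerate convention $\ptp(G;V(G))=0$, which is consistent with the definition of positive semidefinite propagation time given in the introduction.
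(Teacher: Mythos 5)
Your proof is correct and is exactly the definitional unwinding the paper intends for this observation (which it states without proof): with no edges, no vertex can ever perform a force, so $V(G)$ is the only positive semidefinite zero forcing set and $\throtplus(G)=n+0=n$. Nothing further is needed.
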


%Let $\Delta(G)$ denote the maximum degree of G. Then we have the following. 

\begin{prop}
Suppose $G$ is a  graph of order $ n$ with $k$  isolated vertices and $\Delta(G) = 1$. Then\vspace{-3pt} 
\bea\throtplus(G) = \frac{n-k}{2} + k + 1.  \eea 
\end{prop}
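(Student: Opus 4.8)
The plan is to first pin down the structure of $G$. Since $\Delta(G) = 1$, the graph is a disjoint union of $k$ isolated vertices and $m$ copies of $K_2$, where $n = k + 2m$, so $m = \frac{n-k}{2}$; and $m \ge 1$ precisely because $\Delta(G) \ne 0$. With this description in hand, the whole argument reduces to understanding PSD zero forcing on $K_1$'s and $K_2$'s separately.

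For the upper bound I would exhibit an explicit set: take all $k$ isolated vertices together with one endpoint of each of the $m$ edges, so $|S| = k + m$. Removing $S$ leaves the $m$ remaining endpoints as isolated white vertices, and in each case the chosen endpoint is adjacent only to its partner, so it forces that partner in a single time step; hence $\ptp(G;S) = 1$ and $\throtplus(G) \le |S| + \ptp(G;S) = k + m + 1 = \frac{n-k}{2} + k + 1$.

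For the matching lower bound, let $S$ be an arbitrary PSD zero forcing set. Each isolated vertex has no neighbor and so can never be forced, hence lies in $S$; and for each edge $\{a,b\}$, if both $a$ and $b$ were white then neither could ever be forced (forcing $a$ would require $b$ blue and vice versa), so $S$ meets every edge. Therefore $|S| \ge k + m$. Now split on $\ptp(G;S)$: if $\ptp(G;S) \ge 1$ then $\throtplus(G;S) = |S| + \ptp(G;S) \ge k + m + 1$; if $\ptp(G;S) = 0$ then $S = V(G)$, so $|S| = n = 2m + k \ge m + k + 1$ using $m \ge 1$, and again $\throtplus(G;S) \ge m + k + 1$. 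Combining with the upper bound yields $\throtplus(G) = \frac{n-k}{2} + k + 1$. There is no serious obstacle here; the only points needing a line of care are the observation that $S$ must meet every edge (the PSD analog of the fact that every forcing chain needs a seed) and the degenerate case $\ptp(G;S) = 0$, where the bound $|S| \ge k + m$ alone is insufficient and one instead uses $|S| = n$ together with $m \ge 1$.
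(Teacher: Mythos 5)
Your proposal is correct and follows essentially the same route as the paper: an explicit set consisting of all isolated vertices plus one endpoint per edge for the upper bound, and a case split on $\ptp(G;S)\in\{0,1\}$ for the lower bound. Your version is slightly more explicit about why $|S|\ge k+\frac{n-k}{2}$ and why the $\ptp=0$ case still satisfies the bound (via $m\ge 1$), but the substance is identical.
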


\bpf 
%Suppose $\Delta(G) = 1$. 
For any set $S \subseteq V(G)$, necessarily $\pt_+(G;S) \in \{0, 1, \infty\}$. If $\pt_+(G;S) = 0$, then $|S| = n$ and $|S| + \pt_+(G;S) = n$. If $\pt_+(G;S) = 1$, then $|S| \geq \frac{n - k}{2}  + k$. Define $\hat S$ to be the set of all isolated vertices together with one vertex from each copy of $K_2$. Then $|\hat S| + \pt_+(G;\hat S) = \frac{n-k}{2} + k + 1$. %Thus the minimum value of $|S| + \pt_+(G;S)$ is $\frac{n - k}{2}  + k + 1$. 
\epf

\begin{lem}\label{constraint}
Suppose $G$ is a  graph of order $n$ and $S$ is a positive semidefinite zero forcing set of $G$.  Then \vspace{-3pt}
\bea  
n \leq \left\{ \begin{array}{ll}|S|\left(1 + 2 \ptp(G;S)\right)  & \text{if } \Delta(G) = 2\\[.2mm]
|S|\left(1 + \frac{\Delta(G)(\Delta(G) - 1)^{\ptp(G;S)} - \Delta(G)}{\Delta(G) - 2}\right) & \text{if }\Delta(G) > 2
\end{array}\right. .\vspace{-3pt}
\eea 
\end{lem}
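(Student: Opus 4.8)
The plan is to bound, for each $x\in S$, the number of vertices in the forcing tree $T_x$ after $t := \ptp(G;S)$ time steps, and then sum over $x\in S$. Fix a list of forces $\F$ realizing the propagation time $t$, and for $x\in S$ let $T_x$ be the associated forcing tree rooted at $x$. Since $S$ is a PSD zero forcing set and every vertex lies in some $V_x$, we have $n = |V(G)| = |\bigcup_{x\in S} V_x| \le \sum_{x\in S}|V_x|$. So it suffices to show that $|V_x| \le 1 + 2t$ when $\Delta(G)=2$, and $|V_x| \le 1 + \frac{\Delta(\Delta-1)^t - \Delta}{\Delta-2}$ when $\Delta>2$, where $\Delta = \Delta(G)$.

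First I would set up the level structure of $T_x$: let $L_0 = \{x\}$ and let $L_j$ be the set of vertices of $T_x$ that turn blue at time step $j$ along this tree (equivalently, vertices at tree-distance... careful: a force $v\to w$ at step $j$ means $w$ enters at step $j$, and such forces chain into an induced path from $x$, so the "levels" correspond to the number of forces applied). The key structural fact is: if $v\in L_j$ with $j\ge 1$, then $v$ was forced by its parent $u\in L_{j-1}$, and $u$ has already used up one of its $\le\Delta$ neighbors as the vertex that forced $u$ (for $j-1\ge 1$) — actually more carefully, a vertex $u$ can force at most one new vertex per time step within $T_x$ but can force different vertices at different time steps. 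The cleanest bound: each vertex of $T_x$ has degree at most $\Delta$ in $G$, hence at most $\Delta$ children if it is the root $x$, and at most $\Delta - 1$ children if it is a non-root vertex (one neighbor is its parent). This gives $|L_1|\le \Delta$ and $|L_j| \le \Delta(\Delta-1)^{j-1}$ for $j\ge 1$, so $|V_x| = \sum_{j=0}^{t}|L_j| \le 1 + \Delta\sum_{j=0}^{t-1}(\Delta-1)^j$, which is exactly $1 + 2t$ when $\Delta=2$ and $1 + \Delta\cdot\frac{(\Delta-1)^t - 1}{\Delta - 2} = 1 + \frac{\Delta(\Delta-1)^t - \Delta}{\Delta-2}$ when $\Delta>2$. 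Summing over $x\in S$ and using $n\le\sum_{x\in S}|V_x|$ finishes it.

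The main obstacle I anticipate is justifying rigorously that a non-root vertex of $T_x$ has at most $\Delta-1$ children \emph{across all time steps}, not just one child per step. The subtlety is that PSD forcing lets a vertex $v$ force into different components $W_i$ at different times, so $v$ could conceivably force several vertices over the course of the process; the bound $\Delta-1$ comes not from a per-step limit but from the fact that the children of $v$ in the forcing tree are distinct neighbors of $v$ in $G$, and one neighbor (the parent) is already spoken for. I would need to confirm that a vertex never forces its own parent and that the forcing-tree edges are genuine edges of $G$ incident to $v$ — both follow from the definitions of $V_x$ and the induced-subgraph structure of $T_x$ given in the introduction. A secondary point to handle cleanly is the $\Delta(G)=2$ case versus $\Delta(G)>2$: the formula $\frac{\Delta(\Delta-1)^t-\Delta}{\Delta-2}$ is the closed form of the geometric sum $\Delta\sum_{j=0}^{t-1}(\Delta-1)^j$ only when $\Delta\ne 2$, and its limiting value as $\Delta\to 2$ is $2t$, which is why the two cases appear; I would simply evaluate the geometric sum separately in each case.
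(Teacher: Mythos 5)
Your proof is correct and is essentially the paper's argument: both rest on the same geometric-growth count (the root of each propagation contributes at most $\Delta$ new vertices at the first level and each subsequent vertex at most $\Delta-1$ children, since one neighbor is its forcer), summed as a geometric series with the $\Delta=2$ case handled separately. The paper organizes the count globally per time step via $|S^{(1)}|\le|S|\Delta$ and $|S^{(t)}|\le|S^{(t-1)}|(\Delta-1)$, while you organize it per forcing tree by depth; your version is a slightly more careful bookkeeping of the same idea, and your worry about a vertex forcing at several different time steps is correctly resolved by counting children across all steps against the degree bound.
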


\bpf 
%Let $S$ be a PSD zero forcing set for $G$. 
We start with $|S|$ vertices colored blue. To determine the maximum possible number of blue vertices after $t$ time steps, we assume a vertex that turns blue at time $t-1$ forces all its neighbors at time $t$.  Of course, forcing will not generally proceed in this manner, but this count produces a valid upper bound on the number of blue vertices after $t$ time steps.   At the first time step, the positive semidefinite color change rule allows at most $|S|  \Delta(G)$ additional vertices to become blue, so $|S^{(1)}| \leq |S| \Delta(G)$. 
%Let $S^{(t)}$ be the number of vertices that turn blue during time step $t$. 
For  $2 \leq t \leq \ptp(G;S)$, we have $|S^{(t)}|\le |S^{(t-1)}|(\Delta(G) - 1)$. Thus,  $|S^{(t)}| \leq |S^{(1)}|(\Delta(G) - 1)^{t-1} \leq |S|\Delta(G)(\Delta(G) - 1)^{t-1}$ for  $2 \leq t \leq \ptp(G;S)$. Since $S$ is a positive semidefinite zero forcing set, 
\bea
n &=& |S| + |S^{(1)}| + \displaystyle \sum_{t=2}^{\ptp(G;S)} |S^{(t)}| \\
&\leq& |S| +  |S|\Delta(G)  + \displaystyle\sum_{t=2}^{\ptp(G;S)} |S|\Delta(G)(\Delta(G) - 1)^{t-1}\\[.2mm] 
%\eea
%\bea
 &=& |S| + \displaystyle \sum_{t=1}^{\ptp(G;S)}|S|\Delta(G)(\Delta(G) -1)^{t-1}\\ 
%&=& \displaystyle  |S| + \sum_{t=0}^{\ptp(G;S) - 1}|S|\Delta(G)(\Delta(G) -1)^{t} \\[1em]
&=& |S| \cdot \left[1 + \displaystyle \Delta(G) \sum_{t=0}^{\ptp(G;S) - 1}(\Delta(G) - 1)^t \right]\\[.2mm] 
&=&  \left\{ \begin{array}{ll}|S|(1 + 2 \ptp(G;S))  & \text{if } \Delta(G) = 2\\[.2mm]
|S|\left(1 + \frac{\Delta(G)\left((\Delta(G) - 1)^{\ptp(G;S)} - 1\right)}{(\Delta(G) - 1)-1}\right) & \text{if }\Delta(G) > 2
\end{array}\right. .  \qedhere %\\[1 em]
%&=& \left\{ \begin{array}{ll}|S|(1 + 2 \ptp(G;S))  & \text{if } \Delta(G) = 2\\[1 em]|S|\left(1 + \frac{\Delta(G)(\Delta(G) - 1)^{\ptp(G;S)} - \Delta(G)}{\Delta(G) - 2}\right) & \text{if }\Delta(G) > 2\end{array}\right. .
\eea
\epf

\begin{prop}\label{d2bound}
Let $\Delta(G) = 2$.  Then \bea\throtplus(G) \geq \left \lceil \sqrt{2n}-\frac{1}{2} \right \rceil.\eea
\end{prop}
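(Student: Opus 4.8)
The plan is to mimic the optimization argument from Butler and Young's lower bound for standard throttling, but using the constraint from Lemma \ref{constraint} specialized to the case $\Delta(G)=2$. By Lemma \ref{constraint}, any PSD zero forcing set $S$ satisfies $n \le |S|(1 + 2\ptp(G;S))$, so if we write $s = |S|$ and $p = \ptp(G;S)$, then $\throtplus(G;S) = s + p$ is bounded below by the minimum of $s + p$ subject to $s(1+2p) \ge n$ with $s, p$ positive reals.

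First I would treat the continuous relaxation: minimize $f(s,p) = s+p$ subject to $s(2p+1) \ge n$. On the boundary curve $s = n/(2p+1)$, substitute to get $g(p) = n/(2p+1) + p$, differentiate, and find the critical point; setting $g'(p) = -2n/(2p+1)^2 + 1 = 0$ gives $(2p+1)^2 = 2n$, i.e. $2p+1 = \sqrt{2n}$, hence $p = (\sqrt{2n}-1)/2$ and $s = n/\sqrt{2n} = \sqrt{n/2} = \sqrt{2n}/2$. Then $s + p = \sqrt{2n}/2 + (\sqrt{2n}-1)/2 = \sqrt{2n} - \tfrac12$. Since $\throtplus(G)$ is an integer (it is $|S| + \ptp(G;S)$ for some $S$, both integers), we conclude $\throtplus(G) \ge \lceil \sqrt{2n} - \tfrac12 \rceil$.

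The main technical point to be careful about is justifying that the continuous minimum really is a valid lower bound for the integer-constrained problem. One clean way: for the actual optimal $S$, both $|S| \ge 1$ and $\ptp(G;S) \ge 0$, and the constraint $|S|(1 + 2\ptp(G;S)) \ge n$ holds; the function $(s,p) \mapsto s+p$ over the region $\{(s,p) : s > 0,\ p \ge 0,\ s(1+2p) \ge n\}$ attains its infimum at the critical point computed above (one should check it is indeed a minimum, e.g. via the second derivative $g''(p) = 8n/(2p+1)^3 > 0$, so $g$ is convex and the critical point is the global minimum on $p \ge 0$, assuming $n$ is large enough that the critical $p$ is nonnegative, which holds for $n \ge 1$). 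Hence $s + p \ge \sqrt{2n} - \tfrac12$ for the integer values too, and then apply the ceiling since $\throtplus(G) \in \ZZ$.

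I do not expect any serious obstacle here — the argument is a routine Lagrange-multiplier / single-variable calculus computation followed by an integrality rounding step; the only thing requiring a moment's attention is handling the ceiling correctly (one wants $\lceil \sqrt{2n} - \tfrac12 \rceil$ rather than $\lceil \sqrt{2n} \rceil - 1$ or similar) and confirming the edge behavior for very small $n$, which can be checked directly if needed.
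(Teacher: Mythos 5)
Your argument is correct and is essentially the paper's own proof: both use the $\Delta=2$ case of Lemma~\ref{constraint}, relax to a continuous one-variable optimization along the boundary $s(1+2p)=n$, find the critical point yielding the minimum value $\sqrt{2n}-\tfrac12$, and then pass to the integer bound via the ceiling. The only cosmetic difference is that you eliminate $s$ and optimize over $p$, whereas the paper fixes $s$ and optimizes $g(s)=s+\tfrac12\left(\tfrac ns-1\right)$; the conclusion and the structure of the argument are the same.
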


\bpf
For  a positive semidefinite zero forcing set $S$, let $s=|S|$ and $p=\ptp(G;S)$.  We want to minimize the value of $s + p$ subject to $n \leq s(1+2p)$, by Lemma \ref{constraint}. If we allow $s$ and $p$ to be nonnegative real numbers, then for a fixed $s$ we have $p \geq \frac 1 2 \left(\frac n s -1\right)$. Thus, the minimum value of $s+p$  is 
$s+\frac 1 2 \left(\frac n s -1\right)$, achieved by using $p(s):= \frac 1 2 \left(\frac n s -1\right)$ as the value for $p.$  For each $s \geq 0$ define $g(s) = s+\frac 1 2 \left(\frac n s -1\right)$. Then
$g'(s) = 1 - \frac {n} {2s^2}.$
Thus, $g'(s) = 0$ and $s\ge 0$ imply 
$s=\sqrt{\frac{n}{2}},$
%\[ g'(s) = 0  \Leftrightarrow s^2=\frac{bn}{2a}\Leftrightarrow s = \pm\sqrt{\frac{bn}{2a}}.\] where $s\geq 0$ requires the positive square root.
and therefore,
\[s+p = \sqrt{\frac{n}{2}}+ \frac 1 2 \left(n\sqrt{\frac{2}{n}} -1\right)=\sqrt{2n}-\frac 1 2.\] 
Since $\sqrt{2n}-\frac 1 2$ is the minimum value of $s+p$ for $0 \leq s,p \in \mathbb{R}$ and $n \leq s(1+2p)$, adding the constraint that $s$ and $p$ are integers with $p \geq 0$ and $s \geq 1$ still gives the bound $g(s) \geq  \sqrt{2n}-\frac 1 2 $, and thus, $g(s) \geq  \lc \sqrt{2n}-\frac 1 2 \rc$.
\epf 
The bound in Proposition \ref{d2bound} is tight. This is easy to verify for some small order paths and cycles and is proved more generally in Theorems \ref{thmThrP} and \ref{thmThrC} for all paths and for cycles of order at least four.

\begin{thm}\label{thmDeltaGreater2}
Let $G$ be a  graph of order $ n$ with $\Delta(G) \geq 3$. Then
\bea 
\throtplus(G) \geq \left\lceil 1 + \log_{(\Delta(G) - 1)}\left(\frac{(\Delta(G)-2)n + 2}{\Delta(G)}\right)\right\rceil.
\eea 
Furthermore, if $\Z_+(G) = s_0$, then
\bea 
\throtplus(G) \geq \left\lceil s_0 + \log_{(\Delta(G) - 1)}\left(\frac{(\Delta(G)-2)n + 2s_0}{\Delta(G)s_0}\right)\right\rceil.
\eea 
\end{thm}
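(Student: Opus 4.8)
The plan is to start from the $\Delta(G)>2$ case of Lemma~\ref{constraint} and convert it into a lower bound on $\throtplus(G;S)=|S|+\ptp(G;S)$ for an arbitrary PSD zero forcing set $S$. Write $\Delta=\Delta(G)$, $s=|S|$, and $p=\ptp(G;S)$. Using $1+\frac{\Delta(\Delta-1)^{p}-\Delta}{\Delta-2}=\frac{\Delta(\Delta-1)^{p}-2}{\Delta-2}$, that inequality simplifies to
\[
(\Delta-2)n+2s\ \le\ s\,\Delta\,(\Delta-1)^{\,p},
\]
and since $\Delta-1\ge 2$, so that $\log_{(\Delta-1)}$ is increasing, this yields
\[
p\ \ge\ \log_{(\Delta-1)}\left(\frac{(\Delta-2)n+2s}{\Delta s}\right),
\]
hence $\throtplus(G;S)\ge g(s)$, where $g(s):=s+\log_{(\Delta-1)}\!\left(\frac{(\Delta-2)n+2s}{\Delta s}\right)$. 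To finish I would show that, restricted to integers $s\ge s_0$, the function $g$ is minimized at $s_0$: taking $s_0=1$ (legitimate because on a graph with $\Delta\ge3$ every PSD zero forcing set is nonempty, so $|S|\ge1$) gives the first bound, and taking $s_0=\Z_+(G)$ gives the second, since every PSD zero forcing set satisfies $|S|\ge\Z_+(G)$.

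For the minimization step I would \emph{not} differentiate $g$: on the real line $g$ need not be increasing on $[s_0,\infty)$ — for example, with $\Delta=3$ and $n$ large it dips below $g(1)$ on the interval $(1,2)$ — so the integrality of $|S|$ is genuinely needed. Instead, fix a positive integer $s_0\le s$ and weaken the bound on $p$ by replacing $2s$ by $2s_0$ in the numerator:
\[
p\ \ge\ \log_{(\Delta-1)}\left(\frac{(\Delta-2)n+2s_0}{\Delta s}\right)
=\log_{(\Delta-1)}\left(\frac{(\Delta-2)n+2s_0}{\Delta s_0}\right)-\log_{(\Delta-1)}\left(\frac{s}{s_0}\right).
\]
It then suffices to show $s-\log_{(\Delta-1)}(s/s_0)\ge s_0$, i.e., with $k:=s-s_0\ge0$, that $(\Delta-1)^{k}\ge 1+k/s_0$. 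Since $\Delta-1\ge2$ and $s_0\ge1$, this follows from the elementary inequality $2^{k}\ge k+1$ for all integers $k\ge0$ (a one-line induction), because then $(\Delta-1)^{k}\ge 2^{k}\ge k+1\ge 1+k/s_0$. Therefore $\throtplus(G;S)=s+p\ge s_0+\log_{(\Delta-1)}\!\left(\frac{(\Delta-2)n+2s_0}{\Delta s_0}\right)$.

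Finally, $\throtplus(G;S)=|S|+\ptp(G;S)$ is a nonnegative integer, so the real lower bound just obtained can be rounded up: $\throtplus(G;S)\ge\lc s_0+\log_{(\Delta-1)}\!\left(\frac{(\Delta-2)n+2s_0}{\Delta s_0}\right)\rc$. Minimizing over all PSD zero forcing sets $S$ and substituting $s_0=1$, respectively $s_0=\Z_+(G)$, gives the two displayed inequalities. The only delicate point in the argument is the monotonicity claim, and it is delicate precisely because it is false for real $s$ and must exploit that $|S|$ is an integer; everything else is bookkeeping once Lemma~\ref{constraint} has packaged the geometric sum.
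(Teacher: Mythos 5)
Your proposal is correct, and it reaches the bound by a genuinely different route at the one nontrivial step. Both you and the paper begin identically: rearrange the $\Delta>2$ case of Lemma~\ref{constraint} into $p\ge p(s):=\log_{(\Delta-1)}\bigl(\frac{(\Delta-2)n+2s}{\Delta s}\bigr)$ and then minimize $s+p(s)$ over admissible $s$. The paper does the minimization by calculus: it computes $\frac{d}{ds}(s+p(s))$, shows it is positive for all $s\ge 1$ when $\Delta\ge 4$ but only for $s\ge 2$ when $\Delta=3$, and then patches the $\Delta=3$ case by explicitly comparing $1+p(1)$ with $2+p(2)$. You instead avoid differentiation entirely: you weaken the constraint by replacing $2s$ with $2s_0$ in the numerator, peel off $\log_{(\Delta-1)}(s/s_0)$, and reduce everything to $(\Delta-1)^{\,s-s_0}\ge 1+(s-s_0)/s_0$, which follows from $2^k\ge k+1$ for integers $k\ge 0$. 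Your observation that the real-variable function $s+p(s)$ genuinely dips below its value at $s=1$ on the interval $(1,2)$ when $\Delta=3$ and $n$ is large is accurate, and it is exactly the phenomenon that forces the paper into its $\Delta=3$ case split; your argument makes the reliance on integrality of $|S|$ explicit rather than implicit. The trade-off: the paper's calculus computation is reused almost verbatim in the weighted setting of Theorem~\ref{thmDeltaGreater2wt} (where the derivative condition becomes a hypothesis on $\omega$), so it earns its keep later, whereas your argument is shorter, uniform in $\Delta\ge 3$ and in $s_0$, and more self-evidently airtight for this theorem in isolation.
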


\bpf 
Let $s$, $p$, and $\Delta$ denote $|S|$, $\ptp(G;S)$, and $\Delta(G)$ respectively. By Lemma \ref{constraint},

%stress

\begin{eqnarray}
n \leq s\left(1 + \frac{\Delta(\Delta - 1)^p - \Delta}{\Delta - 2}\right)  &\Leftrightarrow& \frac{n}{s} - 1 \leq \frac{\Delta((\Delta - 1)^p - 1)}{\Delta - 2}\nonumber \\[.2mm] 
% &\Leftrightarrow& (\Delta - 2)\left(\frac{n}{s} - 1\right) \leq \Delta(\Delta - 1)^p - \Delta \nonumber\\[.2mm]
% &\Leftrightarrow& \frac{(\Delta - 2)}{\Delta} \left(\frac{n}{s} - 1\right) \leq (\Delta - 1)^p - 1 \nonumber \\[.2mm]
&\Leftrightarrow& \frac{(\Delta - 2)}{\Delta} \left(\frac{n}{s} - 1\right) + 1 \leq (\Delta - 1)^p \nonumber \\[.2mm]
&\Leftrightarrow& \log_{(\Delta -1)}\left(  \frac{(\Delta - 2)}{\Delta} \left(\frac{n}{s} - 1\right) + 1  \right)  \leq p \nonumber \\[.2mm]
%&\Leftrightarrow& \frac{\ln\left(  \frac{(\Delta - 2)}{\Delta} \left(\frac{n}{s} - 1\right) + 1  \right)}{\ln(\Delta - 1)} \leq p \nonumber \\[.2mm]
%&\Leftrightarrow& \frac{\ln\left(  \frac{(\Delta - 2)(n-s) + \Delta s}{\Delta s}\right)}{\ln(\Delta - 1)} \leq p \nonumber \\[.2mm]
&\Leftrightarrow& \frac{\ln\left(  \frac{(\Delta - 2)n + 2s}{\Delta s}\right)}{\ln(\Delta - 1)} \leq p. \label{pconstraint}
\end{eqnarray}

Define $\displaystyle p(s) =\frac{\ln\left(  \frac{(\Delta - 2)n + 2s}{\Delta s}\right)}{\ln(\Delta - 1)}$. For fixed $s \geq 1$, the minimum  of $s + p$ subject to \eqref{pconstraint} is $s + p(s)$. Note that 
\begin{eqnarray} 
\frac{d}{ds}(s + p(s)) &=& \frac{d}{ds}\left[s + \frac{\ln\left(  \frac{(\Delta - 2)n + 2s}{\Delta s}\right)}{\ln(\Delta - 1)} \right] \nonumber \\[.2mm]
&=& 1 + \frac{1}{\ln(\Delta - 1)}  \frac{d}{ds} \left[ \ln \left(\frac{(\Delta - 2)n}{\Delta} \cdot \frac{1}{s} + \frac{2}{\Delta} \right) \right]  \nonumber   \\[.2mm]
&=& 1 + \frac{1}{\ln(\Delta - 1)}\left(\frac{-(\Delta - 2)n}{\Delta s^2}\right)\left( \frac{\Delta s}{(\Delta - 2)n + 2s} \right) \nonumber \\[.2mm]
&=& 1 - \frac{1}{\ln(\Delta - 1)}\left( \frac{(\Delta -2)n}{s(\Delta - 2)n + 2s^2} \right).  \label{dertiv}
\end{eqnarray}  
Since $s \geq 1$, we can observe that $\frac{d}{ds}(s + p(s)) >0$ when $\Delta \geq 4$ by \eqref{dertiv}. 
%when $\Delta > e + 1$. 
So if $\Delta \geq 4$, then $s + p(s)$ is an increasing function, and the minimum occurs at the lowest possible value of  $s$.  

Now suppose $\Delta = 3$ and $s \geq 2$.
Then \eqref{dertiv} becomes 
\beq 
\frac{d}{ds}(s+p(s))= 1 - \frac{1}{\ln 2} \left( \frac{n}{sn+2s^2} \right). \nonumber
\eeq
For all $n \geq 1$,
\bea 
\frac{n}{n+4} < \frac{4}{3} &\Rightarrow& \frac{n}{2n + 8} < \frac{2}{3} < \ln 2\\[.2mm]
&\Rightarrow& \frac{n}{sn + 2s^2} < \ln 2 \\[.2mm]
&\Rightarrow& \frac{1}{\ln 2}\left(\frac{n}{sn + 2s^2}\right) < 1 \\[.2mm]
&\Rightarrow& 0 < 1 -  \frac{1}{\ln 2}\left(\frac{n}{sn + 2s^2}\right)\\[.2mm]
&\Rightarrow& 0 < \frac{d}{ds}(s + p(s)).
\eea 
So when $\Delta = 3$, $s + p(s)$ is increasing for all $s \geq 2$. Thus, $\underset{s\geq 1}{\text{min}} \{s+p(s)\}$ occurs at $s=1$ or $s=2$. Note that
\bea 
\text{min}\{1 + p(1), 2 + p(2) \} &=& \text{min}\left\{1 + \log_2 \left(\frac{n + 2}{3}\right),2 + \log_2 \left(\frac{n + 4}{6}\right)  \right\}\\[.2mm]
&=& \text{min}\left\{\log_2(2) + \log_2 \left(\frac{n + 2}{3}\right),\log_2(4) + \log_2 \left(\frac{n + 4}{6}\right)  \right\}\\[.2mm]
&=& \text{min}\left\{\log_2 \left(\frac{2n + 4}{3}\right),\log_2 \left(\frac{4n + 16}{6}\right)  \right\}\\[.2mm]
%&=& \text{min}\left\{\log_2 \left(\frac{2n + 4}{3}\right),\log_2 \left(\frac{2n + 8}{3}\right)  \right\}\\[.2mm]
&=& \log_2 \left(\frac{2n + 4}{3}\right) = 1 + p(1).
\eea 
This means that for $\Delta \geq 3$, the minimum value of $s + p(s)$ occurs at the minimum value of $s$.
Thus, the minimum value of $s+p$ subject to \eqref{pconstraint} is 
\bea 
1 + p(1) = 1 + \log_{(\Delta(G) - 1)}\left(\frac{(\Delta(G)-2)n + 2}{\Delta(G)}\right)\!.
\eea 
%when $\Delta(G) \geq 3$. 

Observe that $s_0=\Z_+(G) \leq |S|$ for any positive semidefinite zero forcing set $S$. So when this value is known, the minimum value 
of $s+p$ subject to $\eqref{pconstraint}$ and $s \ge  s_0$ occurs when $s = s_0$.
\epf

We show that the bound in Theorem \ref{thmDeltaGreater2} is tight by constructing an infinite family of trees that attain the bound in Proposition \ref{TDHBound}. 
The lower bound for $\throtplus(G)$ in Theorem  \ref{thmDeltaGreater2} is attained only by choosing  a minimum zero forcing set.  However, that does not imply that $\throtplus(G)$ can be attained  by choosing a minimum zero forcing set, as seen in Example \ref{ex:thm25notbest}.  First we give a lower bound on positive definite propagation time.

\begin{rem}\label{LBdistcomp}
As noted in \cite{PSDpropTime}, for any $S \subseteq V(G)$, $\pt_+(G;S) \geq \dist (S\to \overline S)$. Thus 
\[\throtplus (G) \geq \min \{|S| + \dist (S\to \overline S ): S \text{ is a PSD zero forcing set} \}.\] 
\end{rem}

\begin{ex}\label{ex:thm25notbest}  Consider the graph $P_{10}\cp P_2$ shown in Figure \ref{fig:thm25notbest} with the set $S_0$ of four blue vertices.  Observe that $\Z_+(P_{10}\cp P_2)=2$ and for any positive semidefinite zero forcing set $S$ of cardinality two, $\dist (S\to \overline S)\ge 5$.  Thus, $\ptp(P_{10}\cp P_2;S)\ge 5$ and $\throtplus(P_{10}\cp P_2;S)\ge 7$ for such $S$.  However, $\ptp(P_{10}\cp P_2;S_0)=2$ and  $\throtplus(P_{10}\cp P_2;S_0)=6$. 

\begin{figure}[h!]%\vspace{-5pt}
 \begin{center}
 \scalebox{.6}{\includegraphics{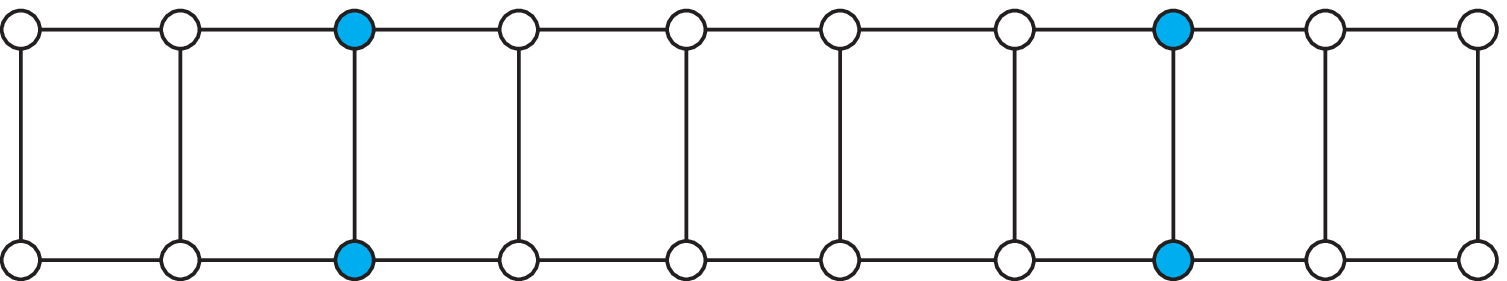}} 

\caption{The graph $P_{10}\cp P_2$ with blue vertices in set $S_0$ that realizes $\throtplus(P_{10}\cp P_2)=6$ \label{fig:thm25notbest}
}\vspace{-20pt}
\end{center}
\end{figure}
\end{ex}

Next we give some easy upper bounds. 

\begin{prop}\label{indepBnd}
Suppose $G$ is a connected graph on $n$ vertices. % with independence number $\alpha(G)$, then
Then \[\thp{G}\leq n -\alpha(G) +1. \]
\end{prop}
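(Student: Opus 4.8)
The plan is to exhibit one explicit PSD zero forcing set of the desired size whose propagation time is at most $1$, and then read off the bound from the definition of $\throtplus(G)$ as a minimum over all PSD zero forcing sets. Let $W$ be a maximum independent set of $G$, so that $|W| = \alpha(G)$, and set $S = V(G)\setminus W = \overline W$, which has $|S| = n - \alpha(G)$.

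First I would verify that $S$ is a PSD zero forcing set and that $\ptp(G;S)\le 1$. Since $W$ is independent, no two of its vertices are adjacent, so every connected component of $G - S = G[W]$ is a single vertex; in the language of the PSD color change rule, the white components $W_1,\dots,W_k$ are exactly the singletons $\{w\}$ with $w\in W$. Assuming $n\ge 2$, connectedness of $G$ forces every $w\in W$ to have a neighbor in $G$, and that neighbor must lie in $S$ because $W$ is independent. For such a neighbor $v\in S$, the only white vertex of $G[\{w\}\cup S]$ is $w$ itself, so $w$ is the unique white neighbor of $v$ in $G[\{w\}\cup S]$, and hence $v\to w$. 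Applying this simultaneously to every $w\in W$ shows that all of $W$ becomes blue at time step $1$, so $S$ is a PSD zero forcing set with $\ptp(G;S)\le 1$.

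The bound then follows at once: $\throtplus(G)\le \throtplus(G;S) = |S| + \ptp(G;S)\le (n-\alpha(G)) + 1$. The only degenerate case is $n = 1$, where $\overline W = \emptyset$ is not a zero forcing set; here $G = K_1$, so $\Delta(G)=0$ and $\throtplus(G)=n=1 = n-\alpha(G)+1$ by the earlier observation, and the claim still holds.

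I do not expect a genuine obstacle in this argument. The two points that need care are checking that the PSD color change rule really does apply to each singleton component in a single step — which it does, precisely because $S$ contains all vertices outside the independent set $W$ so every white vertex sits alone in its component next to an all-blue $S$ — and separately disposing of the trivial $n=1$ case where the complement of a maximum independent set is empty.
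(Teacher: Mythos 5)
Your proof is correct and follows essentially the same route as the paper: take $S$ to be the complement of a maximum independent set, note that every component of $G-S$ is a singleton so $\ptp(G;S)\le 1$, and conclude. Your additional care with the $n=1$ case and the explicit verification of the PSD color change rule are fine but do not change the argument.
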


\begin{proof}
%Let $G$ be a connected graph on $n$ vertices
Let $A\subseteq V(G)$ be a set of independent vertices with $|A|=\alpha(G)$. Let $S=\OL A$. Then each component of $G[A]=G-S$ is an isolated vertex, so $\ptpm{G}{S}=1$. Thus, \[\thp{G}\leq |S| + \ptpm{G}{S} = (n-\alpha(G))+1. \qedhere \]
\end{proof}
The family $\G$ (see Definition  \ref{def:FamilyG}) of graphs $G$ that have $\alpha(G)=2$ and  $\throtplus(G)=|V(G)|-1$ shows that the bound in Proposition \ref{indepBnd} is tight.

\begin{prop}\label{noinduce4hi} Let $G$ be a graph of order $n$ and $H$ be an induced subgraph of $G$ of  order $k$.  Then $\throtplus(G)\leq n-k+\throtplus(H)$.
\end{prop}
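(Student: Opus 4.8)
The plan is to lift an optimal PSD forcing strategy for $H$ to one for $G$ by adjoining all vertices outside $H$ to the initial blue set. Concretely, I would choose a PSD zero forcing set $S_H$ of $H$ with $\throtplus(H)=|S_H|+\ptp(H;S_H)$ and set $S:=S_H\cup(V(G)\setminus V(H))$, so that $|S|=|S_H|+(n-k)$. It then suffices to show that $S$ is a PSD zero forcing set of $G$ with $\ptp(G;S)\le\ptp(H;S_H)$, since then $\throtplus(G)\le\throtplus(G;S)=|S|+\ptp(G;S)\le (n-k)+|S_H|+\ptp(H;S_H)=(n-k)+\throtplus(H)$.

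To bound $\ptp(G;S)$, I would run the ``mimicking'' timing in $G$: at each step perform exactly the forces that the all-forces PSD process on $H$ (started from $S_H$) performs. Writing $S^H_t$ for the set of vertices blue in the $H$-process after $t$ steps, the corresponding blue set in $G$ is $B_t:=S^H_t\cup(V(G)\setminus V(H))$. Granting the standard fact that the all-forces propagation time is at most the length of any valid force timing, it is enough to verify that, for each $t$, every force of step $t$ of the $H$-process is a legal force in $G$ relative to the blue set $B_{t-1}$; then $B_t$ is blue after step $t$ in $G$, and since $S^H_{\ptp(H;S_H)}=V(H)$ we get $B_{\ptp(H;S_H)}=V(G)$, so $S$ is a PSD zero forcing set and $\ptp(G;S)\le\ptp(H;S_H)$.

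The one substantive point is the legality of these forces, and it hinges on $H$ being an \emph{induced} subgraph. At the start of step $t$ the white vertices relative to $B_{t-1}$ are exactly $V(H)\setminus S^H_{t-1}$, all lying in $V(H)$; since $H=G[V(H)]$, any path through white vertices in $G$ lies in $H$, so the components of $G-B_{t-1}$ coincide with the components $W_1,\dots,W_r$ of $H-S^H_{t-1}$. If $v$ forces $w\in W_i$ in $H$ at step $t$, then $w$ is the unique white neighbor of $v$ in $H[W_i\cup S^H_{t-1}]$, that is, $N_H(v)\cap W_i=\{w\}$; because $W_i\subseteq V(H)$ and $H$ is induced, $N_G(v)\cap W_i=N_H(v)\cap W_i=\{w\}$, so $w$ is also the unique white neighbor of $v$ in $G[W_i\cup B_{t-1}]$ and $v\to w$ is legal in $G$. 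All forces within a single step are evaluated against the same blue set $B_{t-1}$, so the whole step transfers at once.

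I expect the main obstacle to be only a matter of care rather than difficulty: namely, recording cleanly that under the all-forces convention $\ptp(G;S)$ is at most the length of any valid force timing (so that exhibiting the mimicking timing genuinely bounds $\ptp(G;S)$), and being precise that the forces of a given step of the $H$-process remain simultaneously legal in $G$. Once the induced-subgraph observation fixes that the component structure seen inside $V(H)$ is identical in $G$ and in $H$, the rest is bookkeeping.
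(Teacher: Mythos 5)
Your proposal is correct and uses exactly the construction in the paper: take an optimal set $S_H$ for $H$ and adjoin all of $V(G)\setminus V(H)$ to form the initial blue set for $G$. The paper simply asserts $\throtplus(G;S')\leq n-k+\throtplus(H)$ without detail, whereas you carefully verify (using that $H$ is induced) that the $H$-forces remain legal in $G$; this is a fuller write-up of the same argument, not a different one.
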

\bpf  Choose a set $S\subseteq V(H)$ such that $\throtplus(H;S)=\throtplus(H)$.  Define $S'=(V(G)\setminus V(H))\cup S$. Then  $\throtplus(G;S')\leq n-k+\throtplus(H)$.
\epf

In \cite[Theorem 1]{BY13throttling}, Butler and Young established an upper bound on standard throttling number in terms of the standard zero forcing number $\Z(G)=k$ and the number of vertices $n$: $\throt(G)\leq (2 k +1) \lc \sqrt{n} \rc +k$. Since $\throtplus(G)\leq\throt(G)$, this gives an upper bound for $\throtplus(G)$.  This bound could be improved slightly for paths and cycles ($\Delta(G)=2$), but we determine the positive semidefinite throttling numbers of these graphs in the next section, so we do not pursue modification of this upper bound further.

%%%%%%%%%%%%%%%%%%%%%%%%%%%%%%%%%%%%%%%%%%%
\section{Paths and cycles}\label{pathsAndCycles} 

In this section we provide  constructions to show that the bound in Proposition \ref{d2bound} for $\Delta=2$ is tight and is attained by  paths and cycles.  We begin with a preliminary lemma.

\begin{lem} 
Let $n \geq 1$. Define $k$ to be the largest even natural number such that $\frac{k^2}{2} \leq n$, and $r=n-\frac{k^2}{2}$. Then \[ \left \lceil \sqrt{2n}-\frac{1}{2} \right \rceil = \left\{\begin{array}{ll} k & \text{if } 0 \leq r < \frac{k}{2} + 1\\[1em]
k+1 & \text{if } \frac{k}{2} + 1 \leq r < \frac{3}{2}k + 2 \\[1em] k+2 & \text{if } \frac{3}{2}k + 2 \leq r < 2k + 2 \end{array}\right.
\]

\label{lemDelta2}
\end{lem}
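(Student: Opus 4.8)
The plan is to reduce the whole statement to elementary arithmetic with the integer $r$. First I would record three basic facts. Since $k$ is even, $\tfrac{k^2}{2}=2\left(\tfrac k2\right)^2\in\ZZ$, so $r=n-\tfrac{k^2}{2}$ is a nonnegative integer and $2n=k^2+2r$; the maximality of $k$ says $\tfrac{(k+2)^2}{2}>n$, i.e.\ $n<\tfrac{k^2}{2}+2k+2$, so $r<2k+2$. Consequently $k^2\le 2n<(k+2)^2$, hence $k\le\sqrt{2n}<k+2$, so $\sqrt{2n}-\tfrac12\in\left[k-\tfrac12,\,k+\tfrac32\right)$ and therefore $\left\lceil\sqrt{2n}-\tfrac12\right\rceil\in\{k,k+1,k+2\}$. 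Moreover the three intervals $\left[0,\tfrac k2+1\right)$, $\left[\tfrac k2+1,\tfrac32 k+2\right)$, $\left[\tfrac32 k+2,\,2k+2\right)$ partition $[0,2k+2)$, so it suffices to pin down exactly which values of $r$ produce each of the three possible values of the ceiling.

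I would then handle the three values one at a time. For $m\in\{k,k+1,k+2\}$, the equation $\left\lceil\sqrt{2n}-\tfrac12\right\rceil=m$ is equivalent to $m-\tfrac12<\sqrt{2n}\le m+\tfrac12$; using the crude bounds $k\le\sqrt{2n}<k+2$ this reduces in the extreme cases to a single inequality (for $m=k$, to $\sqrt{2n}\le k+\tfrac12$, since $\sqrt{2n}\ge k>k-\tfrac12$ automatically; for $m=k+2$, to $\sqrt{2n}>k+\tfrac32$, since $\sqrt{2n}<k+2<k+\tfrac52$ automatically), and the case $m=k+1$ is then the leftover. Squaring (everything in sight is nonnegative) and substituting $2n=k^2+2r$ turns $\sqrt{2n}\le k+\tfrac12$ into $2r\le k+\tfrac14$ and $\sqrt{2n}>k+\tfrac32$ into $2r>3k+\tfrac94$. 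Because $r\in\ZZ$ while $\tfrac k2$ and $\tfrac32 k$ are integers (this is exactly where the parity of $k$ enters), $2r\le k+\tfrac14$ rounds to $r\le\tfrac k2$, equivalently $0\le r<\tfrac k2+1$, and $2r>3k+\tfrac94$ rounds to $r\ge\tfrac32 k+2$, equivalently (using $r<2k+2$) $\tfrac32 k+2\le r<2k+2$. By elimination $\left\lceil\sqrt{2n}-\tfrac12\right\rceil=k+1$ exactly when $\tfrac k2<r<\tfrac32 k+2$, i.e.\ $\tfrac k2+1\le r<\tfrac32 k+2$. This is precisely the claimed trichotomy.

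The argument is entirely routine, so I do not foresee a genuine obstacle; the only points requiring care are the half-integer bookkeeping (so that the thresholds come out as $\tfrac k2+1$, $\tfrac32 k+2$, $2k+2$ rather than something slightly off) and justifying the two rounding steps, which rely on $r\in\ZZ$ and on $k$ being even. It is also worth noting that for $m=k+2$ the upper cut-off on $r$ coming from $\left\lceil\cdot\right\rceil\le k+2$ (namely $\sqrt{2n}\le k+\tfrac52$, i.e.\ $r\le\tfrac52 k+\tfrac{25}{8}$) is weaker than the bound $r<2k+2$ already in hand, so no additional restriction surfaces there; and that the case $n=1$ forces $k=0$, so "natural number" in the statement must be read to include $0$.
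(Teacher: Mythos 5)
Your proof is correct and follows essentially the same route as the paper's: square the inequality $\sqrt{k^2+2r}-\tfrac12 \lessgtr m$ and use the integrality of $r$ together with the parity of $k$ to round the resulting thresholds to $\tfrac k2+1$ and $\tfrac32 k+2$, with the maximality of $k$ supplying $r<2k+2$. The only (cosmetic) difference is that you bracket the ceiling in $\{k,k+1,k+2\}$ up front and settle the middle case by elimination, whereas the paper computes the three cutoffs successively and invokes maximality of $k$ at the end.
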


\bpf 
For every $n \geq 1$, note that $\left \lceil \sqrt{2n} - \frac{1}{2} \right \rceil = \left \lceil \sqrt{2(\frac{k^2}{2}+r)} - \frac{1}{2} \right \rceil= \left\lceil \sqrt{k^2+2r}-\frac{1}{2}\right \rceil$. For $r=0$, $\left \lceil \sqrt{2n} - \frac{1}{2} \right \rceil = \left \lceil k - 1/2 \right \rceil= k$. 

Observe that
\bea
 \left \lceil \sqrt{k^2 + 2r} -\frac{1}{2} \right \rceil > k   
 &\Leftrightarrow&  \sqrt{k^2 + 2r} -\frac{1}{2} > k \\
 %&\Leftrightarrow& \sqrt{k^2 + 2r} > k + \frac{1}{2} \\
 &\Leftrightarrow& k^2 + 2r > k^2 + k + \frac{1}{4} \\
 &\Leftrightarrow& r > \frac{k}{2} + \frac{1}{8}\\
 &\Leftrightarrow& r \geq \frac{k}{2} + 1 
\eea
since $k$ is even and $r$ is an integer. This implies that $\left \lceil \sqrt{k^2 + 2r} -\frac{1}{2} \right \rceil > k$ if and only if $r\geq \frac{k}{2} + 1$. Therefore,  $\left \lceil \sqrt{k^2 + 2r} -\frac{1}{2} \right \rceil= k$ for $0\leq r< \frac{k}{2}+1$.

%Next consider
%\bea
%&& \left \lceil \sqrt{k^2 + 2r} -\frac{1}{2} \right \rceil > k+1 \\
%&\Leftrightarrow& \sqrt{k^2 + 2r} -\frac{1}{2} > k+1 \\
%&\Leftrightarrow& \sqrt{k^2 + 2r} > k + \frac{3}{2} \\
%&\Leftrightarrow& k^2 + 2r > k^2 + 3k + \frac{9}{4} \\
%&\Leftrightarrow& r > \frac{3k}{2} + \frac{9}{8} \\
%&\Leftrightarrow& r \geq \frac{3k}{2} + 2
%\eea
%since $k$ is even and $r$ is an integer. This implies that 
By a similar argument $\left \lceil \sqrt{k^2 + 2r} -\frac{1}{2} \right \rceil > k+1$ if and only if $r\geq \frac{3k}{2} + 2$. Therefore,  $\left \lceil \sqrt{k^2 + 2r} -\frac{1}{2} \right \rceil= k+1$ for $\frac{k}{2}+1\leq r< \frac{3k}{2}+2$. 

%\par Finally
%\bea
% \left \lceil \sqrt{k^2 + 2r} -\frac{1}{2} \right \rceil > k+2 &\Leftrightarrow&\\
% \sqrt{k^2 + 2r} -\frac{1}{2} > k+2 &\Leftrightarrow&\\
% \sqrt{k^2 + 2r} > k + \frac{5}{2} &\Leftrightarrow&\\
% k^2 + 2r > k^2 + 5k + \frac{25}{4} &\Leftrightarrow&\\
% r > \frac{5k}{2} + \frac{25}{8} &\Leftrightarrow&\\
% r \geq \frac{5k}{2} + 4&&
%\eea
%since $k$ is even and $r$ is an integer. This implies that 
Finally, another similar argument shows that $\left \lceil \sqrt{k^2 + 2r} -\frac{1}{2} \right \rceil > k+2$ if and only if $r\geq \frac{5k}{2} + 4$.
Therefore,  $\left \lceil \sqrt{k^2 + 2r} -\frac{1}{2} \right \rceil= k+2$ for $\frac{3k}{2}+2\leq r< \frac{5k}{2}+4$. However, notice that if $r \geq 2k+2$, then $n=\frac{k^2}{2}+r\geq\frac{k^2}{2}+\frac{4k}{2} + \frac{4}{2}=\frac{(k+2)^2}{2}$. 
This implies that we did not pick the largest even natural number $k$ such that $\frac{k^2}{2}\le n$. %Therefore, $r<2k+2$. 
Thus,  $\left \lceil \sqrt{k^2 + 2r} -\frac{1}{2} \right \rceil = k+2$ for $\frac{3k}{2}+2\leq r< 2k+2$. \epf

\begin{thm} [PSD throttling for paths] \label{thmThrP}
Let $n\geq 1$. Then \[\throtplus (P_n)= \left \lceil \sqrt{2n}-\frac{1}{2} \right \rceil.
\]
\end{thm}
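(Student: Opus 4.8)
The plan is to prove the matching upper bound $\throtplus(P_n) \le \lceil \sqrt{2n} - \tfrac12 \rceil$ by exhibiting, for each $n$, a PSD zero forcing set $S$ of $P_n$ with $\throtplus(P_n;S) = \lceil \sqrt{2n} - \tfrac12 \rceil$; combined with the lower bound $\throtplus(P_n) \ge \lceil \sqrt{2n} - \tfrac12 \rceil$ from Proposition \ref{d2bound} (since $\Delta(P_n) \le 2$, and the degenerate small cases $n \in \{1,2\}$ can be checked by hand), this yields equality. The key observation is that on a path, a single blue vertex $v$ that is interior forces outward in \emph{both} directions simultaneously under the PSD rule, so a blue vertex with $a$ white vertices to its left and $b$ to its right clears them in $\max\{a,b\}$ steps; thus $s$ well-placed blue vertices can cover a path of length roughly $2sp$ in $p$ steps.

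The construction I would use: given the target value $m = \lceil \sqrt{2n} - \tfrac12 \rceil$, I would first invoke Lemma \ref{lemDelta2} to write $n = \tfrac{k^2}{2} + r$ with $k$ the largest even number with $\tfrac{k^2}{2} \le n$, so that $m \in \{k, k+1, k+2\}$ according to the three ranges for $r$. In each range I would pick $s$ and $p$ with $s + p = m$ and $s(1 + 2p) \ge n$ — e.g. in the simplest regime take $s = p = k/2$, giving $s(1+2p) = \tfrac{k}{2}(k+1) = \tfrac{k^2}{2} + \tfrac{k}{2} \ge n$ when $r < \tfrac{k}{2}+1$ — and then place the $s$ blue vertices on $P_n$ so that consecutive blue vertices are at distance $2p+1$ (leaving $p$ white vertices between each adjacent pair and at most $p$ white vertices at each end). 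Then each white vertex is within distance $p$ of a blue vertex in its "block," every blue vertex forces both directions each step, and $\ptp(P_n;S) = p$ exactly (or $\le p$, which suffices). The slightly larger regimes $r \ge \tfrac{k}{2}+1$ just need $s$ or $p$ bumped by one, which I would handle case-by-case mirroring the three cases of Lemma \ref{lemDelta2}.

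The main obstacle is the bookkeeping of spacing: one must verify that with $s$ blue vertices and gaps of $p$ white vertices, the total number of vertices covered, namely $s + (s-1)\cdot p + (\text{end gaps}) \le 2sp + s = s(2p+1)$, can be made to equal exactly $n$ for every $n$ in the relevant range — i.e. that the ends can absorb the slack $n - s - (s-1)p \ge 0$ without exceeding $p$ white vertices on either end (which would increase the propagation time). This is where the precise inequalities from Lemma \ref{lemDelta2} ($r$ in $[\tfrac{k}{2}+1, \tfrac{3}{2}k+2)$, etc.) are used to guarantee the chosen $(s,p)$ both satisfies $s(2p+1)\ge n$ and admits a valid placement with propagation time exactly $p$. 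I expect the proof to split into the same three cases as Lemma \ref{lemDelta2}, in each case naming explicit $s,p$, describing the placement of $S$, checking $\ptp(P_n;S)\le p$ via the distance bound (Remark \ref{LBdistcomp} gives the matching lower bound $\ptp \ge \dist(S\to\overline S) = p$ if needed, though only $\le$ is required), and concluding $\throtplus(P_n;S) = s+p = m$.
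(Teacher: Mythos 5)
Your proposal is correct and follows essentially the same route as the paper: the lower bound from Proposition \ref{d2bound}, a case split driven by Lemma \ref{lemDelta2}, and an evenly spaced blue set chosen so that every vertex lies within distance $p$ of $S$, giving $\ptp(P_n;S)\le p$ and $|S|+p$ equal to the target value (the paper's $B$ with spacing $k+1$ is exactly your construction with $k=2p$). The one quibble is your parenthetical ``leaving $p$ white vertices between each adjacent pair,'' which contradicts your stated spacing of $2p+1$ and your total $s(2p+1)$: consecutive blue vertices must have $2p$ white vertices between them, with at most $p$ white vertices at each end, as your count $s+(s-1)\cdot 2p+2p=s(2p+1)$ then confirms.
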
 

\bpf 

By Proposition \ref{d2bound}, it suffices to construct a positive semidefinite zero forcing set $S$ that gives $\throtplus(P_n;S)$ equal to the value in Lemma \ref{lemDelta2}. 

Label the vertices $v_1, v_2, \ldots ,v_n$ in path order. It may help to visualize the path in a snake pattern using rows of length $k$ as in Figure \ref{PathFig}. There are at least $\frac{k}{2}$ rows and $r$ is the number of extra vertices outside the $\frac{k}{2} \times k$ rectangle. 

Let $B =  \{v_{\frac{k^2}{2} - i(k+1)}\!:\! i=0,\dots,\frac{k}{2}-1 \}$; clearly, $|B|=\frac{k}{2}$. In other words, we obtain $B$ by choosing the $(\frac{k^2}{2})^{th}$ vertex and then working backwards, choosing every $(k+1)^{th}$ vertex.  Therefore, there is a subpath of $k$ white vertices between two blue vertices in the rectangle. Thus, there is only one blue vertex in every row.
Note that the last vertex chosen is $v_{\frac{k}{2} + 1}$. If $0 \leq r < \frac{k}{2}$, then each vertex is within distance $\frac{k}{2}$ of a vertex in $B$. So  $|B| + \ptp({P_n};B) \leq \frac{k}{2} + \frac{k}{2} = k$. If $\frac{k}{2} + 1 \leq r < \frac{3}{2}k + 2$, then let $q = \text{min}(n, \frac{k^2}{2} + k + 1)$ and let $S = B \cup \{v_q\}$. Note that since $r < \frac{3}{2}k + 2$, every vertex in $P_n$ is at distance at most $\frac{k}{2}$  from a vertex in $S$. So  $|S| + \ptp(P_n;S) \leq \frac{k}{2} + 1 + \frac{k}{2} = k + 1$. If $\frac{3}{2}k + 2 \leq r < 2k + 2$, then let $S = B \cup \{v_q\} \cup \{v_n\}$. Since $r < 2k + 2$, every vertex in $P_n$ is at distance at most  $\frac{k}{2}$ from a vertex in $S$. So in this case   $|S| + \ptp(P_n;S) \leq \frac{k}{2} + 2 + \frac{k}{2} = k + 2$. 
\epf 

\vspace{-10pt}
\begin{figure}[h] 
\begin{center}
\scalebox{.8}{\includegraphics{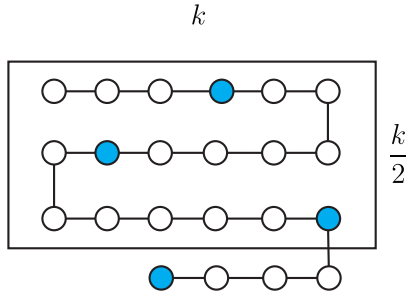}}\\
\caption{The construction for $P_{22}$ with $k=6$ and $r=4$. }\vspace{-10pt}
\label{PathFig}
\end{center}
\end{figure}

\begin{thm} [PSD throttling for cycles] \label{thmThrC}

For $n \geq 4$, $\throtplus(C_n)= \lc \sqrt{2n}-\frac{1}{2} \rc$. 

\end{thm}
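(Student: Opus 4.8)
The plan is to mirror the structure of the proof of Theorem \ref{thmThrP}. Since $\Delta(C_n)=2$, Proposition \ref{d2bound} gives the lower bound $\throtplus(C_n)\ge\lc\sqrt{2n}-\frac12\rc$, so it suffices to exhibit, for each $n\ge 4$, a positive semidefinite zero forcing set $S$ of $C_n$ with $|S|+\ptp(C_n;S)$ equal to the value computed in Lemma \ref{lemDelta2}. As there, write $k$ for the largest even natural number with $\frac{k^2}{2}\le n$ and $r=n-\frac{k^2}{2}$, so the target value is $k$, $k+1$, or $k+2$ according to the three ranges of $r$ in Lemma \ref{lemDelta2}. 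The essential difference from the path case is that on a cycle each blue vertex (or rather each forcing tree rooted at a blue vertex) can propagate in \emph{both} directions simultaneously: a single blue vertex $v$ in a cycle forces both of its neighbors at the first time step (the two components of $C_n-\{v\}$ collapse — actually $C_n-\{v\}$ is a single path $P_{n-1}$, and $v$ has a unique white neighbor in each "side" only after the set has at least two vertices), so more care is needed, but morally a blue vertex covers an arc of radius equal to the propagation time on each side.

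First I would record the precise covering statement: if $S\subseteq V(C_n)$ and every vertex of $C_n$ lies within distance $p$ (along the cycle) of some vertex of $S$, with $|S|\ge 1$ and — to handle the degenerate single-vertex case cleanly — either $|S|\ge 2$ or $n$ small, then $S$ is a PSD zero forcing set with $\ptp(C_n;S)\le p$. This follows because after deleting $S$ the graph $C_n-S$ is a disjoint union of paths (arcs), each of which has both endpoints adjacent to $S$; PSD forcing on each such arc proceeds inward from both ends, so an arc of $\ell$ white vertices is cleared in $\lceil \ell/2\rceil$ steps, and the hypothesis bounds this by $p$. (When $|S|=1$, $C_n-S=P_{n-1}$ is forced from its two endpoints in $\lceil(n-1)/2\rceil$ steps, which is the $k=2$ / very small $n$ corner; I would just check $n=4$ directly, where $\throtplus(C_4)=\lc\sqrt 8-\tfrac12\rc=3$ and $S$ of size $1$ gives propagation time $2$, total $3$ — wait, that needs checking: actually with $|S|=2$ opposite vertices, $\ptp(C_4;S)=1$, total $3$, matching.)

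Then I would place the blue vertices essentially equally spaced. Take a set $B$ of $\frac{k}{2}$ vertices spaced $k+1$ apart (consuming $\frac{k}{2}(k+1)=\frac{k^2}{2}+\frac k2$ vertices worth of arc, i.e.\ runs of $k$ white vertices between consecutive blue ones), which is possible once $n\ge \frac{k^2}{2}+\frac k2$; the leftover $r-\frac k2$ vertices (when $r\ge \frac k2$) form one longer arc. In the first range $0\le r<\frac k2+1$: if $r<\frac k2$ the leftover arc has $<k$ vertices; distribute $B$ so every arc has at most $k$ white vertices, giving $\ptp\le \lceil k/2\rceil=k/2$ and total $\le k/2+k/2=k$ (the boundary $r=k/2$ needs the strict/non-strict bookkeeping of Lemma \ref{lemDelta2}, exactly as in the path proof, and I would follow that accounting). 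In the second range add one more blue vertex splitting the long arc, getting total $\le \frac k2+1+\frac k2=k+1$; in the third range add two more, total $\le \frac k2+2+\frac k2=k+2$. In each case one verifies the maximum arc length stays $\le k$ (resp.\ the relevant bound), so the covering statement applies and gives a PSD zero forcing set meeting the Lemma \ref{lemDelta2} value.

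The main obstacle I anticipate is purely bookkeeping: getting the arc-length/parity arithmetic to line up with the three exact ranges in Lemma \ref{lemDelta2}, since on a cycle one arc of length $\ell$ costs $\lceil \ell/2\rceil$ rather than $\ell$ time steps, so the "budget" is spent differently than on a path and the thresholds $r=\frac k2+1$, $\frac{3k}{2}+2$, $2k+2$ must be re-derived in the cyclic setting (and shown to coincide with the path thresholds, which is why $\throtplus(C_n)$ and $\throtplus(P_n)$ agree for $n\ge 4$). There is also the small-$n$ boundary ($n=4,5$, and the cases where $k$ is small so $\frac{k}{2}$ rounds awkwardly, e.g.\ $k=2$) which I would simply verify by hand. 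No deeper idea is needed beyond the equal-spacing construction and the two-directional propagation observation.
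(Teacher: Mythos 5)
Your proposal is correct and follows essentially the same route as the paper: the lower bound from Proposition \ref{d2bound}, and for the upper bound an equally spaced set of blue vertices at gaps of $k+1$ (so each white arc is forced inward from both ends in at most $\frac{k}{2}$ steps), with $\frac{k}{2}$, $\frac{k}{2}+1$, or $\frac{k}{2}+2$ blue vertices according to the three ranges of $r$ in Lemma \ref{lemDelta2}. The bookkeeping you worried about does line up exactly as you computed, and your observation that $|S|\ge 2$ is needed (a single vertex is not a PSD zero forcing set of a cycle) is the same point the paper makes via $n\ge 4$.
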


\begin{proof}
For each value given in Lemma \ref{lemDelta2}, we will construct a positive semidefinite zero forcing set $S$ that gives $\throtplus(C_n;S)$ equal to the value. By Proposition \ref{d2bound}, this is sufficient. 

Label the vertices of the cycle $v_1, \ldots, v_n$ in cycle order.  
Let $S= \left\{v_{1 + i(k+1)}\!:\! i=0, \ldots, \lf\frac{n-1}{k+1}\rf \right\}$ where $k$ is the largest even integer such that $n \geq \frac{k^2}{2}$. So $S$ is obtained by choosing $v_1$ and then every $k+1$ vertex around the cycle. The cycle is cut into paths because  $n\ge 4$ implies  $|S|=\lf\frac{n-1}{k+1}\rf+1\ge 2$. Every vertex is within a distance of $\frac{k}{2}$ of a vertex in $S$,  so $\ptp(C_n;S)= \frac{k}{2}$.

%We will now determine the size of $S$. 
Let us visualize the cycle snaking in a rectangle that is $k+1$ vertices long as in Figure \ref{fig:Cycle}. 
So there is exactly one colored vertex in every row, alternating between %being in 
the leftmost and rightmost columns. 
Thus,  determining the number of vertices in $S$  is the same as determining the number of rows needed to enclose all of our vertices in this snake pattern. 

Suppose our rectangle has $ \frac{k}{2}$ rows.  Then it encloses up to $\frac{k}{2}(k+1)=\frac{k^2}{2}+\frac{k}{2}$ vertices. If $0 \leq r \leq \frac{k}{2}$, then there are $\frac{k}{2}$ rows. Therefore, $|S|+\ptp(C_n;S) \leq \frac{k}{2}+\frac{k}{2}=k$. 

Now suppose our rectangle has $\frac{k}{2}+1$ rows.  Then it encloses up to $(\frac{k}{2}+1)(k+1)=\frac{k^2}{2}+\frac{3k}{2}+1$ vertices. So, if $\frac{k}{2}+1 \leq r \leq \frac{3k}{2}+1$, then there are $\frac{k}{2}+1$ rows. Thus, $|S|+\ptp(C_n;S) \leq \frac{k}{2}+1+\frac{k}{2}=k+1$.

Finally, suppose our rectangle has $\frac{k}{2}+2$ rows.  Then  it encloses up to $(\frac{k}{2}+2)(k+1)=\frac{k^2}{2}+\frac{5k}{2}+2$ vertices. So, if $\frac{3k}{2}+2 \leq r < 2k+2$, then there are $\frac{k}{2}+2$ rows. Thus, $|S|+p(C_n;S) \leq \frac{k}{2}+2+\frac{k}{2}=k+2$.
%For each case, we have constructed a positive semidefinite zero forcing set that realizes the lower bound found in Lemma \ref{lemDelta2}, completing the proof. 
\end{proof}

\vspace{-6pt}
\begin{figure}[h]
\begin{center}
\scalebox{.8}{\includegraphics{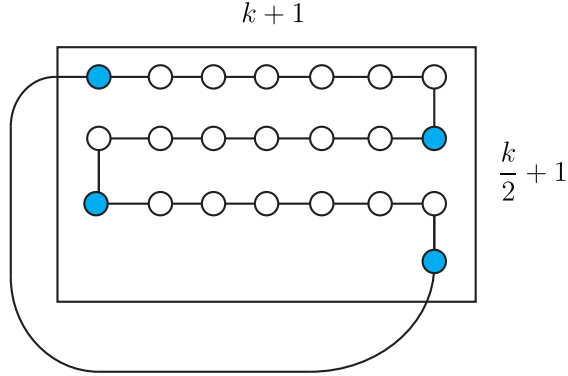}}\\
\caption{The construction for $C_{22}$ with $k=6$ and $r=4$. }\vspace{-10pt}
\label{fig:Cycle}\vspace{-10pt}
\end{center}
\end{figure}

Observe that $C_3=K_3$ and $\throtplus(C_3)=3>2=\lc\sqrt{2n}-\frac 1 2\rc$, so this case is not covered by Theorem \ref{thmThrC}.
%%%%%%%%%%%%%%%%%%%%%%%%%%%%%%%%

%%%%%%%%%%%%%%%%%%%%%%%%%%%%%%%%%%%%%%%%%%%
\section{Trees}\label{trees} 
%%%%%%%%%%%%%%%%%%%%%%%%%%%%%%%%

In this section we explore the behavior of positive semidefinite zero forcing and positive semidefinite throttling on trees. In particular, we demonstrate that positive semidefinite throttling is subtree monotone for trees and exhibit a family of trees that shows that the bound in  Theorem \ref{thmDeltaGreater2} is tight.

\begin{lem} \label{noLeaf}
Let $T$ be a tree on $n\geq 3$ vertices. Then there exists a set $S\subseteq V(T)$  that  contains no leaves  such that
$\thp{T}=\throtplus(T;S).  $
\end{lem}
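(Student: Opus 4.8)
The plan is to start with any optimal PSD zero forcing set $S$ realizing $\throtplus(T;S) = \throtplus(T)$, and to repeatedly replace leaves in $S$ by non-leaves without increasing $\throtplus(T;S)$. Suppose $\ell \in S$ is a leaf with unique neighbor $u$. The natural move is to consider $S' = (S \setminus \{\ell\}) \cup \{u\}$ if $u \notin S$, or simply $S' = S \setminus \{\ell\}$ if $u \in S$ (after arguing $S \setminus \{\ell\}$ is still a PSD zero forcing set in that case). In the first case $|S'| \le |S|$, so it suffices to show $\ptp(T;S') \le \ptp(T;S)$; in the second case $|S'| = |S| - 1$, so we can afford the propagation time to increase by at most $1$.

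First I would handle the case $u \in S$: here I claim $S \setminus \{\ell\}$ is still a PSD zero forcing set of $T$. The idea is that in the very first time step, $u$ (which is blue) sees $\ell$ in its own component of $T - (S\setminus\{\ell\})$; since $T$ is a tree, removing the blue set disconnects things so that whatever component contains $\ell$ has $\ell$ as... more carefully: $\ell$ is a leaf, so in $G[W_i \cup (S\setminus\{\ell\})]$ the vertex $u$ has $\ell$ as a pendant, and $u$ can force $\ell$ as soon as $\ell$ is $u$'s only white neighbor in that induced subgraph — which may not be immediate, but one shows $\ell$ gets forced without ever being needed to force anything else (a leaf in a tree forces nothing), so deleting it from $S$ costs at most one extra time step and removes one vertex, a net non-increase. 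This already reduces to the case $u \notin S$.

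For the main case $u \notin S$, set $S' = (S\setminus\{\ell\})\cup\{u\}$, so $|S'| = |S|$, and I must show $\ptp(T;S') \le \ptp(T;S)$. The key observation is a monotonicity/shift argument: the forcing process from $S'$ dominates that from $S$ in the sense that $S'_t \supseteq S_t$ for all $t$. To see this, note $S'_0 \ni u$, and since $u$'s only role relative to $\ell$ was to receive the force, giving $u$ to the blue set for free can only accelerate forcing; more precisely, I would prove by induction on $t$ that any force valid at time $t$ from the $S$-process is still valid (possibly earlier) in the $S'$-process, using that the components of $T - S'$ refine those of $T - S$ only by possibly splitting off $\ell$, which being a leaf affects nothing else. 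Hence every vertex is blue by time $\ptp(T;S)$ in the $S'$-process, giving $\ptp(T;S') \le \ptp(T;S)$ and thus $\throtplus(T;S') \le \throtplus(T;S) = \throtplus(T)$.

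Finally, I would note this replacement strictly decreases the number of leaves in the blue set (it removes leaf $\ell$ and adds non-leaf $u$, since $n \ge 3$ guarantees $u$ is not a leaf — $u$ has degree $\ge 2$ as it is adjacent to $\ell$ and the tree has $\ge 3$ vertices so $u$ connects to the rest), and it never creates new leaves in $S$. So iterating terminates in finitely many steps at a leaf-free optimal set $S$, as desired. \emph{The main obstacle} I anticipate is the careful bookkeeping in the $S' = (S\setminus\{\ell\})\cup\{u\}$ case: one must verify that promoting $u$ to the initial blue set does not inadvertently merge or alter components of $T - S$ in a way that invalidates later forces in the original list — the tree structure (so that $S'$-components are obtained from $S$-components only by isolating the leaf $\ell$) is exactly what makes this go through, and getting the induction statement phrased so that it is actually provable is the delicate point.
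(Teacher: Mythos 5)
Your proposal is correct and follows essentially the same route as the paper's proof: the same two-case leaf-replacement ($S\setminus\{\ell\}$ when the neighbor $u$ is already blue, costing at most one extra time step against one fewer vertex; $(S\setminus\{\ell\})\cup\{u\}$ otherwise, with the leaf forced at the first step and the process otherwise only accelerated), iterated until no leaves remain. The paper states the monotonicity step more tersely than you do, but the argument is the same.
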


\begin{proof}
Let $S'$ be a set of vertices  such that $\thp{T}=\left|S' \right| + \ptpm{T}{S'}$. Suppose $v\in S'$ is a leaf. Then $N(v)=\left\{u\right\}$ for some $u\in V(T)$. 
If $u\in S'$, then $S=S'\setminus\{v\}$ must also realize $\thp{T}$, as it has one fewer vertex and $\ptpm{T}{S}\leq \ptpm{T}{S'}+1$.
Suppose $u\not\in S'$.  Define $S=(S'\setminus\{v\})\cup\{u\}$ and note that $\ptpm{T}{S'}\geq 1$.  Since $\left|S\right| = \left|S'\right|$ and $v$ is colored blue in the first time step, $S$ must also realize $\thp{T}$.  Repeat this leaf removal process as needed.
\end{proof}

\begin{lem}\label{trim}
Suppose $T$ is a tree of order $n\geq 3$, and $v\in V(T)$ is a leaf. Then
\[\thp{T-v}\leq\thp{T}.  \]
\end{lem}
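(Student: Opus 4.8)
The plan is to observe that on a tree the PSD forcing process is just breadth-first search, and then to note that deleting a leaf changes no relevant distance. First I would record the following sub-observation: if $T$ is a tree and $S$ a nonempty set of vertices, then running the PSD color change rule from $S$ gives $S_t = \{x \in V(T) : \dist(S,x) \le t\}$ for every $t$. The point is that in a tree, a blue vertex $a$ is adjacent to at most one vertex of each connected component of $T$ minus the current blue set (two such neighbours would create a cycle), so $a$ forces every white neighbour it has at each step; hence every white vertex at distance $t$ from $S$ turns blue at step $t$. In particular every nonempty $S$ is a PSD zero forcing set of $T$ and $\ptp(T;S) = \dist(S \to \overline S)$, so $\throtplus(T;S) = |S| + \dist(S \to \overline S)$.

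Next I would pick, using Lemma \ref{noLeaf} (applicable since $n \ge 3$), a set $S \subseteq V(T)$ with $\throtplus(T;S) = \throtplus(T)$ that contains no leaf; in particular $v \notin S$. Since $T$ is a tree and $v$ is a leaf, $T - v$ is again a connected tree, and $S$ is a nonempty subset of $V(T-v)$, so by the sub-observation $S$ is a PSD zero forcing set of $T - v$ with $\ptp(T-v;S) = \dist\big(S \to \overline S \cap V(T-v)\big)$ computed in $T-v$.

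Finally I would compare the two distances. For any $x \in V(T-v)$, a shortest $S$-to-$x$ path in $T$ cannot use the leaf $v$ as an internal vertex, and it does not start or end at $v$ since $v \notin S$ and $x \neq v$; hence the $S$-to-$x$ distance in $T-v$ equals that in $T$, which is at most $\dist(S \to \overline S)$ in $T$. Taking the maximum over $x \in V(T-v)$ gives $\ptp(T-v;S) \le \ptp(T;S)$, and therefore
\[
\throtplus(T-v) \le |S| + \ptp(T-v;S) \le |S| + \ptp(T;S) = \throtplus(T).
\]
The only step needing care is the sub-observation that PSD forcing on a tree is breadth-first search; everything after that is routine. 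An alternative, if one prefers not to isolate that characterization, is a direct induction on $t$ showing that the blue set after step $t$ in $T-v$ contains the blue set after step $t$ in $T$ with $v$ deleted, using that the forced leaf $v$ never performs a force (its unique neighbour is blue before $v$ is).
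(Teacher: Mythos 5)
Your proof is correct and follows essentially the same route as the paper's: invoke Lemma \ref{noLeaf} to obtain a leaf-free optimal set $S$, note $S\subseteq V(T-v)$, and conclude via $\ptpm{T-v}{S}\leq \ptpm{T}{S}$. The only difference is that you justify that last inequality explicitly (through the correct observation that PSD forcing on a tree is breadth-first search, so propagation time is a distance that deleting the unused leaf $v$ cannot increase), whereas the paper simply asserts it.
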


\begin{proof}
By Lemma \ref{noLeaf}, there is a set $S\subseteq V(T)$ that realizes $\thp{T}$ and contains no leaves. Then $S\subseteq V(T-v)$, and $\ptpm{T-v}{S}\leq \ptpm{T}{S}$. Thus, $\thp{T-v}\leq\thp{T}. $
\end{proof}

\begin{thm}\label{subtree}
If $T$ is a tree with subtree $T'$, then
\[\thp{T'}\leq\thp{T}.  \]
That is, positive semidefinite throttling is subtree monotone for trees.
\end{thm}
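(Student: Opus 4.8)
The plan is to deduce Theorem~\ref{subtree} from Lemma~\ref{trim} by a straightforward induction on the number of vertices that must be deleted from $T$ to obtain $T'$. The point is that every subtree of a tree can be reached from the whole tree by successively removing leaves, so the single-leaf statement in Lemma~\ref{trim} iterates up to an arbitrary subtree.

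First I would reduce to the leaf-deletion statement. Let $T'$ be a subtree of $T$, and set $m = |V(T)| - |V(T')| \geq 0$. If $m = 0$ there is nothing to prove, so assume $m \geq 1$. The key combinatorial fact I would invoke (and briefly justify) is that there exists a vertex $v \in V(T) \setminus V(T')$ that is a leaf of $T$: indeed, consider the forest $F = T - V(T')$; pick any vertex $w$ in $F$, walk away from $T'$ along a longest path in $T$ starting at $w$ and not re-entering $T'$, and its far endpoint is a leaf of $T$ lying outside $V(T')$. (Alternatively: root $T$ at any vertex of $T'$; a deepest vertex of $T$ outside $V(T')$ is a leaf of $T$.) Actually the cleanest phrasing: since $T'$ is connected and $T$ is a tree, $T$ has a leaf $v$ not in $V(T')$ — take a leaf of $T$ at maximum distance from $V(T')$; if that leaf were in $V(T')$ then every vertex of $T$ would be in $V(T')$, contradicting $m \geq 1$.

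Having found such a $v$, I would apply Lemma~\ref{trim} to conclude $\thp{T - v} \leq \thp{T}$. Now $T - v$ is again a tree (removing a leaf from a tree yields a tree), it still contains $T'$ as a subtree, and it has $m - 1$ fewer vertices to delete. There is a mild edge case to note: Lemma~\ref{trim} requires $|V(T)| \geq 3$. If at some stage $|V(T - \cdots)| \leq 2$, then since $T'$ is a nonempty subtree and the ambient tree has at most $2$ vertices, $T'$ is $K_1$ or $K_2$ or already equals the ambient tree; the inequality $\thp{K_1} = 1 \leq \thp{K_2} = 2$ and $\thp{K_1} \leq \thp{P_3}$, $\thp{K_2} \leq \thp{P_3}$ are immediate (or follow from the earlier propositions), so the base cases are trivial. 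I would dispatch these in one sentence and then run the induction: by the inductive hypothesis $\thp{T'} \leq \thp{T - v}$, and combining with Lemma~\ref{trim} gives $\thp{T'} \leq \thp{T - v} \leq \thp{T}$, completing the induction.

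The only genuine content beyond bookkeeping is the claim that a subtree can always be peeled off one leaf at a time from the outside — i.e., that $V(T) \setminus V(T')$ always contains a leaf of $T$ whenever it is nonempty. This is where I would be slightly careful, since it is false for the analogous statement about induced subgraphs of general graphs; it works here precisely because $T$ is a tree and $T'$ is connected. I expect this to be the main (though still minor) obstacle: stating it correctly and giving the one-line distance/rooting argument. Everything else is a clean induction invoking Lemma~\ref{trim}.
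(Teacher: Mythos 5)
Your proof is correct and follows essentially the same route as the paper: repeatedly peel off leaves of $T$ lying outside $T'$ and apply Lemma~\ref{trim} at each step. In fact you supply more detail than the paper does, which simply asserts that any subtree can be reached by repeated leaf removal; your justification that $V(T)\setminus V(T')$ always contains a leaf of $T$ (and your handling of the small-order base cases) fills in exactly the step the paper leaves implicit.
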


\begin{proof}
If $|V(T')|=1$, $\throtplus(T')=1\leq \throtplus(T)$. Suppose $|V(T')|\geq 2$ and $|V(T)|\geq 3$. By Lemma \ref{trim}, removing a leaf from $T$ cannot increase the positive semidefinite throttling number. Since any subtree $T'\leq T$ can be attained by repeated removal of leaves from $T$, $\thp{T'}\leq \thp{T}$.
\end{proof}

%\begin{rem}  
Although positive semidefinite throttling is subtree monotone for trees, it is not forest monotone (disconnected subgraph monotone) for trees.  For example, consider the path $P_{22}$.  As proved in Theorem \ref{thmThrP} and  illustrated in Figure \ref{PathFig}, $\throtplus(P_{22})=7$.  But the induced subgraph $H$ consisting of every other vertex has $\throtplus(H)=11$ because $H$ consists of 11 isolated vertices. 
%\end{rem}
Positive semidefinite throttling is also not connected induced subgraph monotone for graphs that are not trees as seen in the next example.\vspace{-5pt}

\begin{figure}[h!]
\begin{center}
\scalebox{.6}{\includegraphics{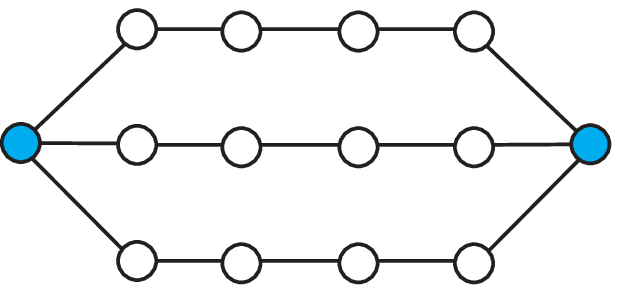}} \hspace{30pt}
\scalebox{.6}{\includegraphics{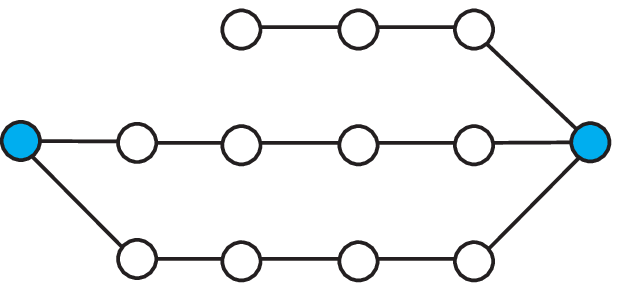}}\\
$G$ \hspace{1.8 in}$G'$
\caption{Graph $G$ with $\throtplus(G)=4$ and subgraph $G'$ with $\throtplus(G')=5$.}
\label{fig:SubMono}\vspace{-15pt}
\end{center}
\end{figure}

\begin{ex}
Let $G$ and $G'$ be the graphs in Figure \ref{fig:SubMono}. The set $\hat S$ of blue vertices has $\pt_+(G; \hat S)=2$, so $\throtplus(G) \leq 4$. Since $G$ is not a tree, $Z_+(G) \geq 2$. Since $\dist(S\to \overline S) \geq 2$ for all minimum positive semidefinite zero forcing sets $S$, it follows that $\throtplus(G) \geq 4$. 

The set $\hat S'$ of blue vertices of $G'$ in Figure \ref{fig:SubMono} has $\pt_+(G'; \hat S')=3$ so $\throtplus (G') \leq 5$. Again, since $G'$ is not a tree, it follows that $Z_+(G) \geq 2$. For all positive semidefinite zero forcing sets $S$, $|S|=2$ implies $\dist(S\to  \overline S) \geq 3$, and  $|S|=3$ implies  $\dist(S\to \overline{S}) \geq 2$.  Thus, $\throtplus(G') \geq 5$. 
\end{ex}

\begin{cor}\label{treeBound1}
If $T$ is a tree with diameter $d$, then
\[\ceil{\sqrt{2(d+1)}-\frac{1}{2}}\leq\thp{T} \leq \ceil{\frac{d}{2} }+1.\]
\end{cor}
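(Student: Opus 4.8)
The plan is to bracket $\thp{T}$ using two facts already available in the excerpt: the subtree-monotonicity of PSD throttling (Theorem \ref{subtree}) on the lower side, and a direct center-based construction together with Proposition \ref{d2bound} / path results on the upper side. For the lower bound, observe that a tree of diameter $d$ contains an induced (equivalently, sub-) path on $d+1$ vertices, namely a diametral path $P_{d+1}$. By Theorem \ref{subtree}, $\thp{P_{d+1}}\leq \thp{T}$, and by Theorem \ref{thmThrP} we have $\thp{P_{d+1}}=\ceil{\sqrt{2(d+1)}-\frac12}$. This gives the left-hand inequality immediately.

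For the upper bound, I would exhibit a PSD zero forcing set $S$ with $|S|+\ptp(T;S)\leq \ceil{d/2}+1$. The natural choice is to take $S$ to be a single center vertex of $T$ (or the set of one or two center vertices if $d$ is odd, but a single center vertex suffices): if $u$ is a center vertex then $\ecc(u)=\ceil{d/2}$ for a tree, so every vertex of $T$ lies within distance $\ceil{d/2}$ of $u$. One then needs $\ptp(T;\{u\})\leq \ceil{d/2}$; since $\{u\}$ is a PSD zero forcing set of a tree (in a tree, PSD forcing from one vertex floods outward — each newly-blue vertex can force all its still-white neighbors because removing the blue set disconnects the tree into the subtrees hanging off $u$), the propagation time is exactly the maximum distance from $u$, i.e. $\ecc(u)=\ceil{d/2}$. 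Hence $\thp{T}\leq |S|+\ptp(T;\{u\}) = 1 + \ceil{d/2}$.

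The main obstacle is making the upper-bound argument airtight: one must confirm that a single center vertex is genuinely a PSD zero forcing set for an arbitrary tree and that its propagation time equals its eccentricity. This follows from the structure of PSD forcing on trees — with $S=\{u\}$, the components of $T-u$ are the subtrees rooted at the neighbors of $u$, and within each such subtree the unique blue vertex at "radius $j$" has exactly one white neighbor at radius $j+1$ in each relevant induced subgraph, so forcing proceeds level by level and reaches everything at radius $\ecc(u)$ in exactly $\ecc(u)$ steps. Alternatively, one can sidestep a detailed forcing analysis by invoking Remark \ref{LBdistcomp} in reverse spirit via a known identity $\ptp(T;\{u\})=\dist(\{u\}\to \overline{\{u\}})=\ecc(u)$ for trees, or simply cite the standard fact (from \cite{PSDpropTime} or \cite{smallparam}) that PSD propagation time from a single vertex in a tree equals its eccentricity. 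Everything else — the equality $\ecc(u)=\ceil{d/2}$ for a center vertex of a tree, and the reduction of the lower bound to $P_{d+1}$ — is routine.
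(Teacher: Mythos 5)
Your proposal is correct and follows essentially the same route as the paper: the lower bound comes from a diametral subpath $P_{d+1}$ via subtree monotonicity (Theorem \ref{subtree}) together with the path bound, and the upper bound comes from a single center vertex achieving propagation time $\ceil{d/2}$. The only difference is that you prove the level-by-level flooding fact $\ptp(T;\{u\})=\ecc(u)$ directly, whereas the paper simply cites Warnberg's result from \cite{PSDpropTime} that $\ptp(T)=\ceil{d/2}$ for trees.
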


\begin{proof}
Since $T$ has diameter $d$, it contains a path of length $d$. Thus, $P_{d+1}$ is a subtree of $T$, and the lower bound follows from Theorem \ref{subtree} and Proposition \ref{d2bound}.
In \cite{PSDpropTime}, Warnberg shows that for any tree, $\ptp(T)=\ceil{\frac{d}{2}}$, which establishes the upper bound. 
\end{proof}

A \emph{full binary tree of height $h$} is a tree with {\em root} vertex $r\in V(T)$ such that
\begin{enumerate}
\item $\deg(r)=2$,
\item For all $ v\in V(T)$, $\dist (v,r) \leq h$,
\item For all $ v\in V(T)$ with $v\neq r$, $\displaystyle{ \operatorname{deg}(v)=\begin{cases} 3 & \dist(v, r) <h \\ 1 &\dist (v,r) =h. \end{cases}  }$
\end{enumerate}
A vertex $u$ is called a \emph{parent} of vertex $v$ if $v\sim u$ and $\dist (u, r)=\dist(v, r)-1$. In this case, $v$ can be referred to as a \emph{child} vertex of $u$. Further, $v$ is a \emph{sibling} of $w$ if $w$ and $v$ share a parent vertex.

\begin{prop}\label{binary}
If $T$ is a full binary tree of height $h$, then $\throtplus(T)=h+1$.
\end{prop}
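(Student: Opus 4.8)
The plan is to prove the two inequalities $\throtplus(T)\le h+1$ and $\throtplus(T)\ge h+1$ separately. For the upper bound, I would exhibit an explicit positive semidefinite zero forcing set. A natural candidate is $S=\{r\}$ together with... actually the cleanest choice: since a full binary tree of height $h$ has diameter $d=2h$, Corollary \ref{treeBound1} already gives $\throtplus(T)\le\ceil{d/2}+1=h+1$. So the upper bound is immediate and I would just cite Corollary \ref{treeBound1} (equivalently, Warnberg's result $\ptp(T)=\ceil{d/2}=h$ realized by $S=\{r\}$, which is a minimum PSD zero forcing set since $\Z_+(T)=1$ for any tree).

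The real content is the lower bound $\throtplus(T)\ge h+1$. Here I would take an arbitrary PSD zero forcing set $S$ with $|S|=s$ and argue that $s+\ptp(T;S)\ge h+1$. The key obstruction I expect is controlling the propagation time from below in terms of $|S|$: Remark \ref{LBdistcomp} gives $\ptp(T;S)\ge\dist(S\to\overline S)$, so it suffices to show $s+\dist(S\to\overline S)\ge h+1$. The plan is a structural/counting argument on the tree: if $S$ is small, then some leaf (or some vertex at depth $h$) is far from all of $S$. More precisely, I would argue that removing the vertices of $S$ from $T$ breaks $T$ into at most $s\cdot\Delta$-ish pieces, but better to reason directly via root-to-leaf paths: there are $2^h$ leaves, and I want to show that for any set $S$ of size $s$, some leaf $\ell$ has $\dist(S,\ell)\ge h+1-s$.

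The cleanest route is probably induction on $h$. Base case $h=0$ (a single vertex, or $h=1$ a path $P_3$) is direct: $\Z_+$ forces $|S|\ge1$ and if $|S|=1$ then $\ptp\ge h$, giving $\ge h+1$; checking small cases against the formula. For the inductive step, let $r$ be the root with children $c_1,c_2$ rooting subtrees $T_1,T_2$, each a full binary tree of height $h-1$. Given $S$ with $\throtplus(T;S)=s+p$, intersect with the two halves: if $S$ misses one subtree $T_i$ entirely (other than possibly through $r,c_i$), then propagation into $T_i$ must travel from near the root, forcing $p\ge h$, done. Otherwise $S$ meets both halves; then $S\cap(V(T_i)\cup\{r\})$ acts like (or dominates) a PSD zero forcing set on $T_i$ after identifying $r$ with $c_i$, and one can invoke the inductive hypothesis on each $T_i$ to get $|S\cap T_i|+p\ge h$, hence $|S|+p = |S\cap T_1|+|S\cap T_2| + |S\cap\{r\}| + p \ge$ (roughly) $(h - p) + (h-p) + p \ge h+1$ once $p\le h-1$; and if $p\ge h$ we are trivially done. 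The bookkeeping with the shared root vertex $r$ and with the fact that PSD forcing sees all components of $T-S$ simultaneously is the delicate part.

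The main obstacle I anticipate is making the "split into two subtrees" argument rigorous for PSD zero forcing: one must verify that the restriction of a PSD forcing process on $T$ to a subtree $T_i$ (with an added blue neighbor at the root, if $r\in S$ or if $r$ gets colored) is itself a valid PSD forcing process on $T_i$, so that $\ptp(T_i; S\cap V(T_i)\cup\{\text{boundary}\})\le p$ and $|S\cap V(T_i)|+\text{(that propagation time)}\ge \throtplus(T_i)= h$. Because PSD forcing acts independently on the components of $T-S$ and subtrees of a tree behave well, this should go through, but the edge cases — when $r\notin S$, when $c_i\notin S$, when $S$ is entirely on one side — each need a sentence. Once those are handled, summing the two lower bounds and adding back the possible contribution of $r$ yields $s+p\ge h+1$, completing the proof.
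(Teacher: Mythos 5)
Your upper bound is fine and is exactly the paper's (Corollary \ref{treeBound1} with $d=2h$, realized by $S=\{r\}$). The lower bound, however, has a genuine gap at its quantitative core. You invoke the inductive hypothesis to claim $|S\cap V(T_i)|+p\ge h$ for each subtree $T_i$ of height $h-1$. But the inductive hypothesis says $\throtplus(T_i)\ge h$, and to apply it you must exhibit a positive semidefinite zero forcing set of $T_i$ whose propagation time in $T_i$ is at most $p$; the set $S\cap V(T_i)$ alone does not qualify, since forcing may enter $T_i$ through $c_i$ from the root side and the propagation time of $S\cap V(T_i)$ inside $T_i$ can far exceed $p$ (e.g.\ $S=\{r,\ell\}$ with $\ell$ a leaf of $T_1$). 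The correct restriction is $(S\cap V(T_i))\cup\{c_i\}$, which does have propagation time at most $p$ in $T_i$, but it costs one extra vertex: the inductive hypothesis then only yields $|S\cap V(T_i)|+1+p\ge h$, i.e.\ $|S\cap V(T_i)|\ge h-1-p$. Summing over the two subtrees gives $|S|+p\ge 2(h-1-p)+p=2h-2-p$, which reaches $h+1$ only when $p\le h-3$; combined with the trivial case $p\ge h$, the values $p\in\{h-2,h-1\}$ remain uncovered --- and $p=h-1$ is precisely where the optimum is attained (by $S=\{c_1,c_2\}$). Closing the gap requires separate arguments for these two values (for instance, that every single vertex has eccentricity at least $h$, and an analysis of which two-element sets have eccentricity $h-1$), or a strengthened inductive hypothesis that records the equality cases.

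For comparison, the paper avoids the root decomposition entirely: it takes an optimal $S$ with $|S|\ge 2$, picks $v\in S$ of maximum depth with parent $u$ and sibling $w$, and performs local exchanges (delete $v$ if $u\in S$; replace $\{v,w\}$ by $\{u\}$ if $w\in S$; replace $v$ by $u$ otherwise) that never increase $\throtplus(T;\cdot)$, reducing to the case $|S|=1$, where only the root achieves $h+1$. Your distance-based viewpoint --- on a tree $\ptp(T;S)=\dist(S\to\overline S)$ for every nonempty $S$, since a blue vertex has at most one neighbor in any component of the white subgraph --- is a genuinely useful simplification and could support a correct proof, but the induction as written does not yet deliver the bound.
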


\begin{proof}
Let $r\in V(T)$ be the root of $T$. Note that by Corollary \ref{treeBound1}, $\throtplus(T)\leq h+1$, and that this bound is realized by choosing $S=\{r\}$. Further, for all vertices $v\neq r$, there exists $v'\in V(T)$ such that $\dist(v, v') >h$, and  $\throtplus(T,\{v\})>h+1$.

Let $S\subseteq V(T)$ be a choice of starting vertices such that $\throtplus(T;S)=\throtplus(T)$, and suppose $|S|\geq 2$.    We show there is a set $S'$ such that $|S'|<|S|$ and $\throtplus(G,S')\le \throtplus(G,S)$, which %or  (b) $|S'|=|S|$, $\throtplus(G,S')\le \throtplus(G,S)$, and $\mac s
allows us to reduce to the previous case where $S$ is a single vertex. Let $v\in S$ be such that $\dist(v,r)\ge \dist(v',r)$ for all $v'\in S$, let $u$ be the parent vertex of $v$, and let $w$ be the sibling vertex of $v$. Consider the following cases.

\noindent \textbf{Case 1:}
If $u\in S$, define $S'=S\setminus \{v\}$.  Then $\throtplus(T;S')\leq\throtplus(T;S)$
%$\throtplus(T;S\setminus \{v\})\leq\throtplus(T;S)$, 
because you reduce the size of $S$ by one and increase propagation time by at most one.

\noindent\textbf{Case 2:}
If $u \notin S$, but $w$ in $S$, define $S'=(S \setminus \{v,w\})\cup\{u\}$.  Then $\throtplus(T;S')\leq\throtplus(T;S)$ %then $\throtplus(T;S \setminus \{v,w\}\cup\{u\})\leq \throtplus(T;S)$ 
for the same reason as in Case 1.

\noindent\textbf{Case 3:}
If $u,w\notin S$, then $\throtplus(T;(S\setminus\{v\})\cup\{u\})\leq \throtplus(T;S).$ Since $v$ was of maximum distance from the root in $S$, forcing would have to travel through the parent $u$ to $w$ (and possibly beyond). Locally, taking the parent vertex then reduces the propagation time by one. Globally, propagation time either does not change, or is reduced by one.  Applying this strategy to all vertices at maximum distance would reduce the positive semidefinite throttling time, contradicting the minimality of $\throtplus(T;S)$.  So in this process we must encounter one of Case 1 or Case 2.
\end{proof}

\begin{rem}\label{binfam}
Note that Proposition \ref{binary}, in conjunction  with Theorem \ref{subtree}, determines the throttling number for a large family of trees. Specifically, for any tree $T$ that has a full binary subtree of height $h$ and diameter $2h$, $\throtplus(T)=h+1$.  
\end{rem}

We will now construct a family of graphs for which the bound given in Theorem \ref{thmDeltaGreater2} is tight. Define the tree $T(\Delta,h)$ as follows: Every vertex in the tree has degree $\Delta$ or one, and all leaves are distance $h$ from the (unique) center of the tree. Essentially, this is a full $(\Delta-1)$-ary tree of height $h$, where the root (center) has an extra branch. See Figure \ref{fig:DeltaTree}.

\begin{figure}[ht] 
\begin{center}
\scalebox{.7}{\includegraphics{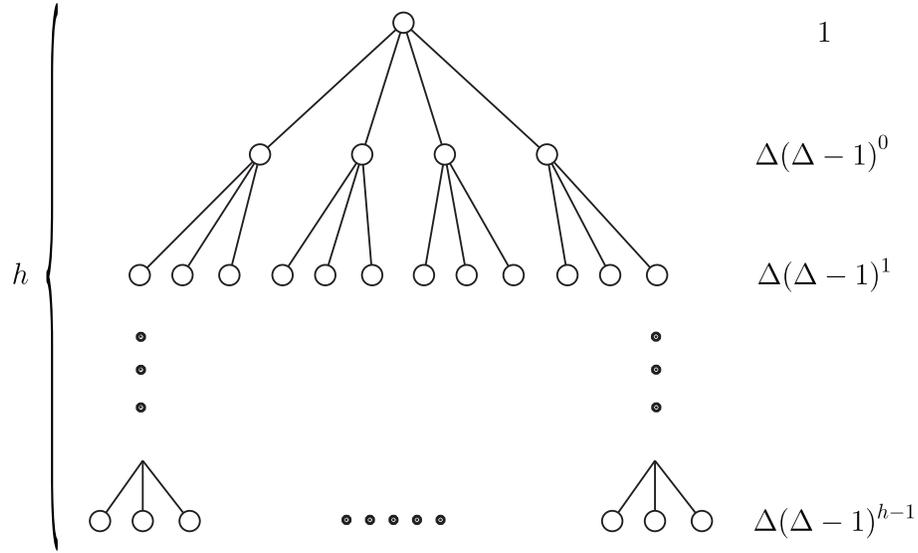}}
\caption{The tree $T(\Delta,h)$ \label{fig:DeltaTree}}\vspace{-10pt}
\end{center}
\end{figure}

\begin{prop}\label{TDHBound}
For all $\Delta\geq 3$ and $h\geq 2$, $\thp{T(\Delta,h)}=h+1$. Furthermore, the bound given in  Theorem \ref{thmDeltaGreater2}
is tight for $T(\Delta,h)$.
\end{prop}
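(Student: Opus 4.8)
The plan is to pin down the order $n$ of $T(\Delta,h)$ and then squeeze $\thp{T(\Delta,h)}$ between an easy upper bound of $h+1$ and the lower bound of Theorem \ref{thmDeltaGreater2}, which for this particular value of $n$ evaluates to exactly $h+1$. This gives the equality $\thp{T(\Delta,h)}=h+1$ and, since $h+1$ is the value of the right-hand side of that bound, it simultaneously shows the bound of Theorem \ref{thmDeltaGreater2} is attained.

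First I would count vertices. Rooting $T(\Delta,h)$ at its center $r$, the root has $\Delta$ children and each child is the root of a full $(\Delta-1)$-ary tree of height $h-1$, so summing the geometric series level by level gives $n = 1 + \Delta\sum_{i=0}^{h-1}(\Delta-1)^i = 1 + \frac{\Delta\left((\Delta-1)^h-1\right)}{\Delta-2}$. The only algebraic fact I need is the rearrangement $(\Delta-2)n+2 = \Delta(\Delta-1)^h$, equivalently $\frac{(\Delta-2)n+2}{\Delta}=(\Delta-1)^h$; this short computation is the only genuine calculation in the argument, and I expect it to be the one place where care is needed (keeping track of the $\Delta-2$ in the denominator).

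For the upper bound, I would observe that $\diam{T(\Delta,h)}=2h$: the center $r$ has eccentricity $h$, so every two vertices are at distance at most $h+h=2h$, and two leaves lying in different branches at $r$ realize distance $2h$. Corollary \ref{treeBound1} then gives $\thp{T(\Delta,h)}\le \lceil \frac{2h}{2}\rceil+1 = h+1$ (equivalently, $S=\{r\}$ works, since $\ptp(T(\Delta,h);\{r\})=\ecc(r)=h$). For the lower bound, since $\Delta(T(\Delta,h))=\Delta\ge 3$, Theorem \ref{thmDeltaGreater2} applies and, using the identity above, $\thp{T(\Delta,h)} \ge \left\lceil 1+\log_{\Delta-1}\!\left(\frac{(\Delta-2)n+2}{\Delta}\right)\right\rceil = \left\lceil 1+\log_{\Delta-1}\!\left((\Delta-1)^h\right)\right\rceil = \lceil 1+h\rceil = h+1$. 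Combining the two bounds yields $\thp{T(\Delta,h)}=h+1$, and since this common value equals the right-hand side of the bound in Theorem \ref{thmDeltaGreater2}, that bound is tight for $T(\Delta,h)$. Everything beyond the vertex count follows directly from results already established, so there is no substantial obstacle.
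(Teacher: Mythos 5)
Your proof is correct, and it takes a genuinely different route from the paper's for the equality $\thp{T(\Delta,h)}=h+1$. The paper obtains the lower bound $\thp{T(\Delta,h)}\ge h+1$ by observing that $T(\Delta,h)$ contains a full binary tree of height $h$ as a subtree and has diameter $2h$, then invoking Proposition \ref{binary} together with subtree monotonicity (Theorem \ref{subtree}, packaged as Remark \ref{binfam}); only afterward does it compute $n$ and check that the bound of Theorem \ref{thmDeltaGreater2} evaluates to $h+1$, which establishes tightness. You instead use Theorem \ref{thmDeltaGreater2} itself as the lower bound: since $(\Delta-2)n+2=\Delta(\Delta-1)^h$, the bound evaluates to exactly $h+1$, and the matching upper bound comes from Corollary \ref{treeBound1} (equivalently $S=\{r\}$ with $\ptp(T(\Delta,h);\{r\})=\ecc(r)=h$). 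This is not circular --- exhibiting a set that meets a valid lower bound is precisely how one shows the bound is attained --- and it is arguably more economical, since it bypasses the case analysis in Proposition \ref{binary} entirely. What the paper's route buys is an independent determination of $\thp{T(\Delta,h)}$ that does not lean on the bound whose sharpness is at issue, and it reuses machinery (full binary trees, subtree monotonicity) already developed for Remark \ref{binfam}; what your route buys is a shorter, self-contained verification in which the single computation $\frac{(\Delta-2)n+2}{\Delta}=(\Delta-1)^h$ does all the work. Your vertex count and that algebraic identity both check out.
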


\begin{proof}
Note that $T(\Delta,h)$ has a full binary tree of height $h$ as a subtree, and has diameter $2h$. As per Remark \ref{binfam}, $\throtplus(T(\Delta,h))=h+1$. Furthermore, 
\[n=|V(T(\Delta,h))|=1+\Delta\sum_{k=0}^{h-1}(\Delta-1)^k=\frac{\Delta(\Delta-1)^h-2}{\Delta-2}.  \]
Thus, Theorem \ref{thmDeltaGreater2} gives the bound
\[\throtplus(T(\Delta,h))\geq 1+ \log_{\Delta-1}\left(\frac{\left( \Delta-2\right)\left(\frac{\Delta(\Delta-1)^h-2}{\Delta-2}\right)+2}{\Delta} \right)=h+1. \qedhere  \]
\end{proof}\vspace{-10pt}

%%%%%%%%%%%%%%%%%%%%%%%%%%%%%%%%%

%%%%%%%%%%%%%%%%%%%%%%%%%%%%%%%%%%%%%%%%%%%
\section{Extreme positive semidefinite throttling}\label{extreme} 
%%%%%%%%%%%%%%%%%%%%%%%%%%%%%%%%
In this section, we investigate graphs with extreme positive semidefinite throttling numbers. Specifically, we classify graphs with $\throtplus(G)\in \{1, 2, 3\}$ and connected graphs with $\throtplus(G)\in \{n-1,n\}$.
\subsection{Low positive semidefinite throttling number}
%In this section we classify graphs with $\throtplus(G;S)\in \{1, 2, 3\}$. 

\begin{obs}
$\throtplus(G)=1$ if and only if $G$ is a single vertex.
\end{obs}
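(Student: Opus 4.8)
The plan is to prove the biconditional by checking each direction against the definitions of $\throtplus$, $|S|$, and $\ptp(G;S)$. For the reverse direction, suppose $G$ is a single vertex $v$. Then the only nonempty subset of $V(G)$ is $S=\{v\}$, which is trivially a PSD zero forcing set (it already contains every vertex), so $\ptp(G;S)=0$ and hence $\throtplus(G;S)=|S|+\ptp(G;S)=1+0=1$. Since this is the only candidate set, $\throtplus(G)=1$.

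For the forward direction, suppose $\throtplus(G)=1$. Then there is a PSD zero forcing set $S$ with $|S|+\ptp(G;S)=1$. Because $S$ is a PSD zero forcing set of a (finite) graph it is nonempty, so $|S|\ge 1$; combined with $\ptp(G;S)\ge 0$ this forces $|S|=1$ and $\ptp(G;S)=0$. But $\ptp(G;S)=0$ means no forces are needed, i.e.\ $S$ already equals $V(G)$; since $|S|=1$, this gives $|V(G)|=1$, so $G$ is a single vertex.

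I expect no real obstacle here — this is an immediate unwinding of definitions, the only subtlety being to note that a PSD zero forcing set is nonempty (so $|S|\ge 1$) and that propagation time $0$ exactly characterizes the case $S=V(G)$. I would write it as a short two-direction argument, essentially as above.
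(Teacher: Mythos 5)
Your proof is correct and is exactly the definitional unwinding the paper has in mind; the paper states this as an \texttt{obs} with no written proof, so there is nothing to diverge from. Your one point of care --- that a PSD zero forcing set of a nonempty graph must itself be nonempty, and that $\ptp(G;S)=0$ forces $S=V(G)$ --- is the right thing to note.
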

\begin{prop} \label{th+2}
For a graph $G$ of order at least $2$, $\throtplus (G) = 2$ if and only if $G=K_{1,n-1}$ or $G=2K_1$. 
\end{prop}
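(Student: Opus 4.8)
The plan is to prove both directions of the biconditional, using the characterization of propagation times for small values plus the lower bound from Lemma~\ref{constraint}. For the easy direction, suppose $G = K_{1,n-1}$ with center $c$; then $S = \{c\}$ is a PSD zero forcing set (the complement is an independent set, each component an isolated vertex), so $\ptp(G;S) = 1$ and $\throtplus(G;S) = 2$, giving $\throtplus(G) \le 2$; since $|V(G)| \ge 2$ forces $\throtplus(G) \ge 2$, equality holds. For $G = 2K_1$, the only PSD zero forcing set is $V(G)$ by the observation that isolated vertices must be in any forcing set, so $\throtplus(G) = |V(G)| + 0 = 2$.

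For the forward direction, suppose $\throtplus(G) = 2$, so there is a PSD zero forcing set $S$ with $|S| + \ptp(G;S) = 2$. Since $|S| \ge 1$ and $\ptp(G;S) \ge 0$, the only possibilities are $(|S|, \ptp(G;S)) = (2,0)$ or $(1,1)$. I would handle each case:

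\textbf{Case $|S| = 2$, $\ptp(G;S) = 0$:} Then $S = V(G)$, so $n = 2$. A graph on two vertices is either $K_2 = K_{1,1}$ or $2K_1$, both of which appear in the conclusion. (One should check $\throtplus(K_2) = 2$: indeed $S = \{$one vertex$\}$ forces the other in one step, so this is consistent.)

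\textbf{Case $|S| = 1$, $\ptp(G;S) = 1$:} Write $S = \{v\}$. Since the whole graph is colored in one step, every vertex of $G$ other than $v$ turns blue at time $1$. By the PSD color change rule applied to the single blue vertex $v$: the components $W_1, \dots, W_k$ of $G - v$ must each be forced, and $v$ can force at most one vertex in each $G[W_i \cup \{v\}]$ in one step — namely $w_i$ when $w_i$ is the unique white neighbor of $v$ in that induced subgraph. For all of $G - v$ to be blue after one step, each $W_i$ must therefore consist of a single vertex $w_i$ adjacent to $v$. Hence $G - v$ is an independent set and $v$ is adjacent to every vertex of it, i.e., $G = K_{1,n-1}$ (including the degenerate cases $n = 2$, where $K_{1,1} = K_2$, and $n=1$, excluded by hypothesis). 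This completes the forward direction, and combining with the easy direction finishes the proof. The main point to get right is the careful argument in the last case that $\ptp(G;\{v\}) = 1$ forces each component of $G - v$ to be a single vertex adjacent to $v$; this is where the distinctive feature of the PSD rule (forcing into each component separately, but still only one vertex per component per step when starting from a single blue vertex) does the work.

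\textbf{Remark on scope.} One subtlety: the statement says "$G = K_{1,n-1}$ or $G = 2K_1$", and $K_{1,1} = K_2 = P_2$ is a star, so the two families overlap only in spirit; $2K_1$ is genuinely separate since it is disconnected with no edges. I would make sure the case analysis above cleanly recovers exactly these, noting that $2K_1$ arises only in the $(|S|,\ptp) = (2,0)$ case and all connected examples are stars.
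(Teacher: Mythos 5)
Your proof is correct and takes essentially the same approach as the paper: verify the two examples directly, then split the converse into the cases $(|S|,\ptp_+(G;S))=(2,0)$ and $(1,1)$, observing in the latter case that each component of $G-v$ must be a single vertex adjacent to $v$. Your write-up is somewhat more detailed than the paper's (which states the key step about components more tersely), but the underlying argument is identical.
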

\bpf
If $G=2K_1$, then $Z_+(G)=2$ and $\ptp(G;V(G))=0$. If $G=K_{1,n-1}$, then choose the center vertex of $G$ to be $S$ and $\ptp(G;S)=1$.
Now suppose $\throtplus (G; S)=2$ for some $S$. Then $|S| \in \{1,2\}$. If $|S|=1$, then the propagation time is one. This means $v\notin S$ implies $ v\in N(S)$, and $v$ is its own component in $G-S$. So $G=K_{1,n-1}$. If $|S|=2$, then propagation time is zero, so $|V(G)|=2$. 
\epf

\begin{rem}
We note that Proposition \ref{th+2} is in stark contrast to $\throt(K_{1,n-1})$: For standard zero forcing, $K_{1,n-1}$ requires that at least $n-2$ of the leaves be in the zero forcing set. 
After coloring these leaves, the remaining two vertices can be forced in two steps,  another vertex can be colored blue initially to obtain  propagation time one, or all vertices can be colored blue initially. All  of these zero forcing sets result in a standard throttling number of $n$.
\end{rem}

\begin{thm} \label{th+3}
For a graph $G$, $\throtplus (G)=3$ if and only if
\begin{enumerate}[(1)]
\item\label{th3c1} $G$ is disconnected and 
\ben[a)]
\item $G$ is $3K_1$, or 
\item $G$ has two components,  each component is a copy of $K_{1,n-1}$ or $K_1$, and at least one component has order greater than one.   
\een
\item\label{th3c2} $G$ is a tree with diameter three or four, or

\item\label{th3c3} $G$  is connected and there exist $v,u \in V(G)$ such that: 
\begin{enumerate}[a)]
\item $G$ has a cycle, or $G$ is a tree with $\diam G=5$, %512 period to comma
\item $N(u) \cup N(v)=V(G)$,
\item $\deg(w) \leq 2$ for all $w \not\in  \{v,u\}$, and
\item if $w_1,w_2 \in N(u)$ or $w_1,w_2 \in N(v)$, then $w_1$ is not adjacent to $w_2$. 
\end{enumerate} 
\end{enumerate}
\end{thm}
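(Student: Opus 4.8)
The plan is to verify the ``if'' direction by exhibiting, for each graph in the list, a PSD zero forcing set $S$ with $\throtplus(G;S)=3$ and then checking $\throtplus(G)\ge 3$ via Proposition~\ref{th+2} (a graph with $\throtplus(G)\le 2$ is $K_1$, $2K_1$, or $K_{1,n-1}$, none of which appears below). For the constructions: in case~(1a) take $S=V(G)$; in case~(1b) take $S$ to be the center of each star component (a single vertex from a $K_1$ component), giving $|S|=2$ and propagation time $1$; in case~(2), a tree of diameter $3$ or $4$ has $\ptp(T)=\lceil d/2\rceil\le 2$ by the Warnberg formula quoted in Corollary~\ref{treeBound1}, and the lower bound $\ceil{\sqrt{2(d+1)}-1/2}$ there gives $\throtplus(T)\ge 3$ once $d\ge 3$; in case~(3), take $S=\{u,v\}$, note that condition (b) forces propagation to finish by time $1$ from $u,v$ unless two uncolored vertices in the same neighborhood are adjacent — which (d) forbids — so $\ptp(G;\{u,v\})=1$ and $\throtplus(G;S)\le 3$, while condition (a) (presence of a cycle, or diameter $5$) rules out $\throtplus(G)\le 2$ by Proposition~\ref{th+2} together with a quick check that $\Z_+(G)\ge 2$ in the cyclic case and $\dist(S\to\overline S)\ge\lceil 5/2\rceil\ge 3$ for any $2$-set in the diameter-$5$ tree case.

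For the harder ``only if'' direction, suppose $\throtplus(G)=3$, and fix $S$ realizing it, which we may assume (when $G$ is a tree, by Lemma~\ref{noLeaf}) contains no leaves. Then $|S|\in\{1,2,3\}$. If $|S|=3$ then $\ptp(G;S)=0$, so $n=3$; since $\throtplus(G)=3$ and not $\le 2$, Proposition~\ref{th+2} kills $K_{1,2}=P_3$ and $3K_1$ itself is the only disconnected option among $n=3$ graphs with no star/isolated structure giving $\throtplus\le 2$ — this lands us in case~(1a) (one must also rule out $K_3$, $P_3$; $K_3$ has $\throtplus=3$ via $|S|=2$, so it reappears in the $|S|=2$ analysis). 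If $|S|=2$ then $\ptp(G;S)=1$: every vertex outside $S$ must turn blue in one step, i.e.\ for the PSD rule each white component of $G-S$ is a single vertex (otherwise a second step is needed), and each such vertex has a forcing neighbor in $S$. Writing $S=\{u,v\}$, this says $N(u)\cup N(v)\supseteq\overline S$, that $\overline S$ is independent, and that each $w\in\overline S$ is the unique white neighbor (in the relevant induced subgraph) of $u$ or of $v$ — unpacking the PSD color change rule here is what yields conditions (b), (c) (a vertex $w\notin\{u,v\}$ adjacent to two others would block the force), and (d) (adjacent $w_1,w_2$ in a common neighborhood block each other). Splitting on whether $G$ is connected then produces case~(3) (connected) or forces $G$ to be two stars/isolated vertices, i.e.\ case~(1b); the diameter/cycle dichotomy in (3a) and the tree diameters in (2) are pinned down by comparing against the bounds $\ceil{\sqrt{2(d+1)}-1/2}\le\throtplus\le\ceil{d/2}+1$ from Corollary~\ref{treeBound1} and by Proposition~\ref{th+2}. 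Finally $|S|=1$ gives $\ptp(G;S)=2$, so $G$ is connected; here I would argue that $S=\{v\}$ with $\ptp(G;\{v\})=2$ means every vertex is at distance $\le 2$ from $v$ and the two-step PSD spread from a single vertex is exactly captured by the $N(u)\cup N(v)$ structure with $u$ a neighbor of $v$ doing the second-round forcing, again landing in case~(2) (if $T$ a tree) or case~(3).

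The main obstacle will be the $|S|=2$, $\ptp=1$ bookkeeping: translating ``all of $G-S$ gets forced in a single PSD step from $\{u,v\}$'' into the clean combinatorial conditions (b)--(d) requires carefully tracking which white component each outside vertex lies in and ruling out the ways a single step can fail (a white component of size $\ge 2$, or a would-be forcer having two white neighbors in its component). A secondary subtlety is the boundary between cases~(2) and~(3) for trees — a diameter-$5$ tree realizing $\throtplus=3$ must be shown to satisfy the degree and neighborhood conditions of~(3), and conversely trees of diameter $3,4$ must be shown \emph{not} to need the extra structure — and making sure every small graph ($K_3$, $P_3$, $P_4$, $P_5$, small stars) is correctly sorted into exactly one case, with no overlaps left implicit. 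I would organize the write-up as: lower-bound lemmas first (mostly citing Proposition~\ref{th+2} and Corollary~\ref{treeBound1}), then the ``if'' constructions case by case, then the ``only if'' split by $|S|$ with the $|S|=2$ case carrying the real content.
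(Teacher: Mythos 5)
There is a genuine error in the core of your ``only if'' argument. In the $|S|=2$, $\ptp(G;S)=1$ case you assert that ``each white component of $G-S$ is a single vertex (otherwise a second step is needed), and each such vertex has a forcing neighbor in $S$,'' and you conclude that $\overline S$ is independent. This is false for positive semidefinite forcing: a component of $G-S$ that is a copy of $P_2$, say $w_1\sim w_2$ with $w_1$ adjacent to $u$ but not $v$ and $w_2$ adjacent to $v$ but not $u$, is entirely forced at the first time step, since $w_1$ is the unique white neighbor of $u$ in $G[\{w_1,w_2\}\cup S]$ and symmetrically for $v$. The theorem's condition (d) is calibrated exactly to permit this: it forbids adjacent pairs only \emph{within} $N(u)$ or \emph{within} $N(v)$, not across the two neighborhoods. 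Your stronger requirement that $\overline S$ be independent would exclude, for example, $C_5$ (take $S=\{u,v\}$ a nonadjacent pair; $G-S$ has a $P_2$ component, yet $\throtplus(C_5)=3$ by Theorem~\ref{thmThrC} and $C_5$ satisfies case (3)). So as written your ``only if'' direction derives a characterization that is strictly narrower than the one being proved. The correct bookkeeping, as in the paper, is that every component of $G-S$ is an isolated vertex or a $P_2$ with one end adjacent to $u$ only and the other to $v$ only; from this one reads off $\deg(w)\le 2$ and condition (d). The same oversight infects your ``if'' direction for case (3), where you must also show that (c) and (d) together rule out components of $G-S$ of order at least three before concluding $\ptp(G;\{u,v\})=1$.

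Two smaller points. First, your $|S|=1$ branch cannot land in case (3): $|S|=1$ forces $\Z_+(G)=1$, hence $G$ is a tree, and $\ptp(G;\{v\})=2$ forces $\diam G\le 4$, so this branch lies entirely in case (2); the paper instead reaches $|S|=2$ for all non-trees and all diameter-$5$ trees by noting $\Z_+(G)\ge 2$ for graphs with a cycle and that no single vertex of a diameter-$5$ tree is within distance $2$ of every other vertex. Second, apart from the error above, your overall architecture (constructions plus Proposition~\ref{th+2} for the forward direction; case split on $|S|$ with Corollary~\ref{treeBound1} controlling tree diameters for the converse) matches the paper's.
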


\begin{proof}
First, we show that every graph $G$ satisfying one of conditions \eqref{th3c1} -- \eqref{th3c3} has $\throtplus(G)=3$.   Clearly $\throtplus(3K_1)=3$, and the two-component graphs in \eqref{th3c1} have positive semidefinite propagation time equal to one. 
Let $G$ be a tree of diameter three or four and $v$ be a center vertex, %512 combin sentences
so every vertex in the tree is a distance at most two away from $v$. % (and at least one vertex is at distance two from $v$). 
Thus, $\pt_+(G;\{v\})\le 2$ and $\throtplus(G) \leq 3$. Since $G$ has diameter three or four, $G \neq K_{1,n-1}$. Therefore,  $\throtplus(G)  \neq 2$ by Proposition \ref{th+2}. Thus, $\throtplus(G)  = 3$. 

Suppose $G$ is a graph satisfying \eqref{th3c3} % such that $G$ has a cycle, there exists $v,u \in V(G)$ such that $N(u) \cup N(v)=V(G)$, for all $w\in V(G)\setminus \{v,u\}$ $\deg(w) \leq 2$, if $w_1,w_2 \in N(u)$, then $w_1 \not \sim w_2$, and if $w_1,w_2 \in N(v)$, then $w_1 \not \sim w_2$ .
and let $S$ be the set of the designated vertices  $v$ and $u$. The induced subgraph  $G-S$ has isolated vertices and copies of $P_2$ as components. In each $P_2$ component, one end is adjacent to $u$ and the other  is adjacent to $v$, because $N(u) \cup N(v)=V(G)$. Thus, $\ptp(G;S)=1$ and $\throtplus(G;S) \leq 3$. 
Since $G$ has a cycle or  $\diam G=5$,  $\throtplus(G;S) \neq 2$  by Proposition \ref{th+2}, so $\throtplus(G; S)  = 3$.

 Now suppose $G$ is a graph such that $\throtplus(G)=3$. Let  $n$ denote the order of $G$, $d=\diam G$, and $S\subseteq V(G)$ such that $\throtplus(G;S)=3$.  If $G$ is disconnected and has three components, then $\Z_+(G)= 3$, $\ptp(G)=0$, and $G=3K_1$.  %512 and -> comma
 If $G$ has two components, then $\Z_+(G)= 2=|S|$ and $\ptp(G)=1$, so each component is $K_1$ or $K_{1,n-1}$ (with at least one $K_{1,n-1}$), or $|S|=3$; the latter was already discussed.  %512 revised
 Thus  $G$ is disconnected implies $G$ is of the form  \eqref{th3c1}.  So we assume $G$ is connected.  If $|S|=3$, then $G=K_3$, which is of the form \eqref{th3c3}.  So we assume $n\ge 4$ and $|S|\le 2$.
   If $G$ is a tree, then  $3\le \diam G\le 5$ by %512 removed: Proposition \ref{th+2} and 
   Corollary \ref{treeBound1}.  The cases of diameter three and four are covered by \eqref{th3c2}.  
   
 Now we have the following information: $G$ is connected,  $n\ge 4$,  $|S|\le 2$, and  $G$ is not a tree of  diameter less than five.   This implies $|S|= 2$, because $\Z_+(G) \ge 2$ for any graph that is not a tree, and if $G$ is a tree with $d=5$, then any one vertex is at a distance at least three from some vertex. 
% Suppose first that $|S|=3$. In this case, $\ptp(G;S)=0$, so $|V(G)|=3$.  Thus, $G$ is one of: three isolated vertices, a path on two vertices and an isolated vertex, or a complete graph on three vertices ($P_3$ is excluded since $\throtplus(P_3)=2$). The first two are of the form \eqref{th3c1} and the third is of the form \eqref{th3c3}. 
 % If $G$ is disconnected, then $G$ has two components $G_1$ and $G_2$, $\ptp(G_i;S)\le 1$ for $i=1,2$, at least one has order greater than one, and $G_i=K+{1,n_i-1}$ or $G_i=K_1$ for $i=1,2$.
 Let $S=\{v,u\}$.  Then all other vertices are adjacent to at least one of $v,u$ (and there is a vertex $w\ne u,v$). Every component of  the subgraph $G-S$ is an isolated vertex or a copy of $P_2$ with one end  adjacent to $u$ and the other adjacent to $v$, since both are forced in the first time step. Therefore,  $\deg(w) \leq 2$ for all $w \in \OL S$. 
  If $w_1, w_2$ are both adjacent to $v$, then they cannot be adjacent in order to have propagation time equal to one; the case $w_1, w_2$  both adjacent to $u$ is similar. Thus, $G$ is of the form \eqref{th3c3}. 
%\noindent \textbf{Case  $|S|=3$:} 
\end{proof}

\subsection{High positive semidefinite throttling number}

In this section, we classify the families of connected graphs of order $n$ with positive semidefinite throttling number equal to $n$ or $n-1$.

\begin{prop}\label{th+n}
Let $G$ be a connected graph. Then
$\throtplus(G)=n$ if and only if $G=K_n$.
\end{prop}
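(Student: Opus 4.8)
The plan is to prove both directions of the equivalence $\throtplus(G)=n \iff G=K_n$ for a connected graph $G$ on $n$ vertices. One direction is immediate: if $G=K_n$, then for any proper subset $S\subsetneq V(G)$ the complement $\OL S$ is nonempty and every vertex of $\OL S$ is adjacent to every vertex of $S$ and to every other vertex of $\OL S$; in the PSD zero forcing process with start set $S$, $G-S$ is a single clique (one component), so any $v\in S$ can force only if $|\OL S|=1$, i.e. $|S|=n-1$. Hence the only PSD zero forcing sets are $V(G)$ (giving $\throtplus=n$) and any $(n-1)$-subset (also giving $(n-1)+1=n$), so $\throtplus(K_n)=n$.

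For the forward direction I would argue by contrapositive: assume $G$ is connected and $G\neq K_n$, and exhibit a PSD zero forcing set $S$ with $\throtplus(G;S)\le n-1$. The natural candidate is to delete from $V(G)$ a suitable small set of vertices whose removal leaves components that are easy to force. Since $G\neq K_n$, there exist two nonadjacent vertices $x,y$; I would pick $S = V(G)\setminus\{x,y\}$, so $|S|=n-2$. Then $G-S$ consists of the two isolated vertices $x$ and $y$ (they are nonadjacent, so they lie in different components of $G-S$, each a single vertex). Because $G$ is connected with $n\ge 2$, each of $x,y$ has a neighbor in $S$, and since $x$ is the unique white vertex in its component $\{x\}$, any blue neighbor of $x$ forces $x$ in one step; likewise for $y$. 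Hence $\ptp(G;S)=1$ and $\throtplus(G;S)=(n-2)+1=n-1<n$. This shows $\throtplus(G)\le n-1$, completing the contrapositive. (The edge cases $n=1$ and $n=2$ should be checked separately: $K_1$ has $\throtplus=1=n$, and the only connected graph on $2$ vertices is $K_2$, consistent with the statement.)

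The main point requiring care — though it is not really an obstacle — is verifying that $S=V(G)\setminus\{x,y\}$ is genuinely a PSD zero forcing set and that its propagation time is exactly $1$, which hinges on $x$ and $y$ being in separate components of $G-S$; this is exactly where the hypothesis $x\not\sim y$ is used, and where connectivity guarantees each has a blue neighbor. An alternative, essentially equivalent, route is to invoke Proposition \ref{indepBnd}: since $G\neq K_n$, we have $\alpha(G)\ge 2$, so $\throtplus(G)\le n-\alpha(G)+1\le n-1$; combined with the trivial observation $\throtplus(G)\le n$ always and the computation $\throtplus(K_n)=n$, this finishes the proof more quickly. I would likely present the short argument via Proposition \ref{indepBnd} for the forward direction and the direct clique computation for the reverse direction.
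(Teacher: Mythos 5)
Your proof is correct and essentially matches the paper's: the paper also handles the forward direction by invoking Proposition \ref{indepBnd} (which forces $\alpha(G)=1$ when $\throtplus(G)=n$) and the reverse direction by noting every PSD zero forcing set of $K_n$ has at least $n-1$ vertices. Your explicit construction $S=V(G)\setminus\{x,y\}$ is just the proof of Proposition \ref{indepBnd} specialized to an independent set of size two, so there is no substantive difference.
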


\begin{proof}
Assume $\throtplus(G)=n$. Then by Proposition \ref{indepBnd}, $\alpha(G)=1$ and $G$ is complete. 
Any positive semidefinite zero forcing set of $K_n$ contains at least $n-1$ vertices, so $\throtplus(K_n)=n$.\end{proof}

\begin{rem}
Since $n=\throtplus(G)\le \throt(K_n)\le n$,  $\throt(K_n)=n$ (this is also easy to see directly). %Since  $\throt(K_n)=n$  with the last vertex colored after the first propagation or with all vertices blue initially  and $\pt(K_n,S)=0$.  
However, there are other graphs $G$ of order $n$, such as $K_{1,n-1}$, that have  $\throt(G)=n$.  %Because the removal of any subset of vertices of $K_n$ does not disconnect the graph, only standard zero forcing can be used, so $\throtplus(K_n)=\throt(K_n)$.\\
\end{rem}

In order to classify the connected graphs of order $n$ with positive semidefinite throttling number equal to $n-1$, we define a family $\G$ of graphs.

\begin{defn}\label{def:FamilyG}The set $\mathcal{G}$ consists of all graphs $G$ such that 
 $\alpha(G)=2$ and
 $G$ does not have an induced $C_5$, house, or double diamond  subgraph (see  Figure \ref{fig:highthrotforbid}).\vspace{-10pt}
\begin{figure}[ht] 
\begin{center}
\scalebox{.5}{\includegraphics{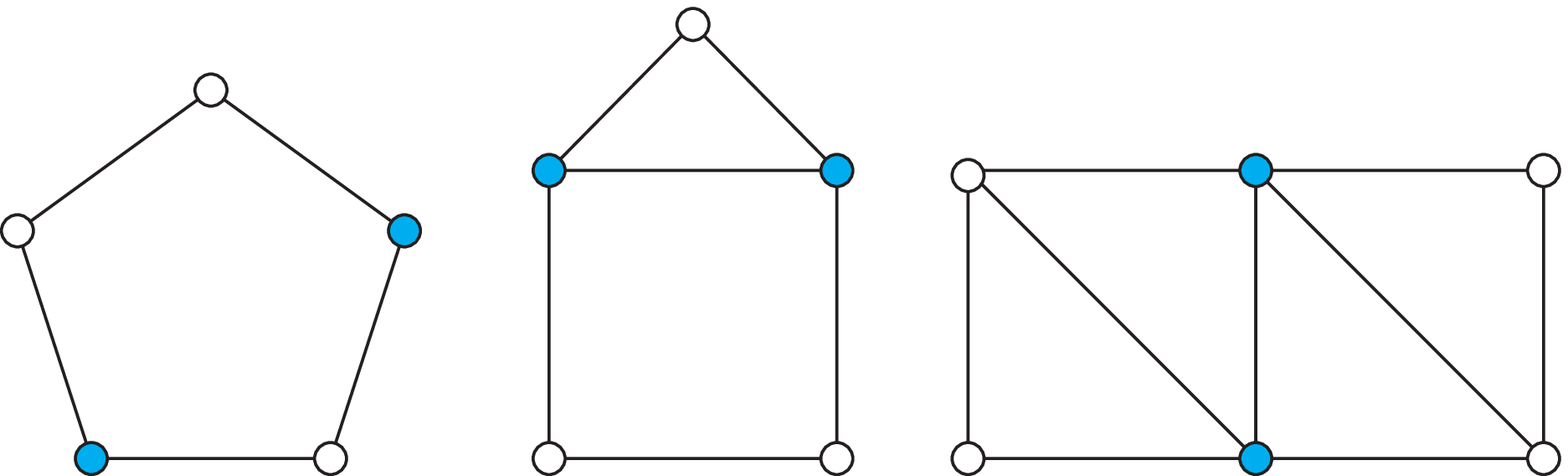}}\\
$\null\qquad C_5\ \ \qquad\qquad$ \ house$\qquad\ \ \ $ double diamond
\caption{Graphs forbidden as induced subgraphs of $\G$.\label{fig:highthrotforbid}}\vspace{-15pt}
\end{center}
\end{figure}
\end{defn}

The next proposition shows that every connected graph of order $n$ with positive semidefinite throttling number equal to $n-1$ is in $\mathcal G$.

\begin{prop}\label{th+n-1Suf}
Let $G$ be a connected graph on $n$ vertices. If $G\notin\mathcal{G}$,
then $\throtplus(G)\neq n-1$.
\end{prop}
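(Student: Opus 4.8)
The plan is to prove the contrapositive: if $G$ is a connected graph on $n$ vertices with $\throtplus(G) = n-1$, then $G \in \mathcal{G}$, i.e., $\alpha(G) = 2$ and $G$ has no induced $C_5$, house, or double diamond. First I would handle the independence number. By Proposition \ref{indepBnd}, $\throtplus(G) \le n - \alpha(G) + 1$, so $\throtplus(G) = n-1$ forces $\alpha(G) \le 2$. If $\alpha(G) = 1$ then $G = K_n$ and $\throtplus(G) = n$ by Proposition \ref{th+n}, a contradiction; hence $\alpha(G) = 2$. This is the easy half.

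The main work is showing that each of the three forbidden graphs, if present as an induced subgraph, would force $\throtplus(G) \le n-2$. The tool is Proposition \ref{noinduce4hi}: if $H$ is an induced subgraph of $G$ of order $k$, then $\throtplus(G) \le n - k + \throtplus(H)$. So it suffices to check that each forbidden graph $H$ has $\throtplus(H) \le |V(H)| - 2$: for $C_5$ this means $\throtplus(C_5) \le 3$; for the house (order $5$) this means $\throtplus(\text{house}) \le 3$; for the double diamond (order $6$, judging from the figure) this means $\throtplus(\text{double diamond}) \le 4$. For $C_5$ I can invoke Theorem \ref{thmThrC}, which gives $\throtplus(C_5) = \lceil \sqrt{10} - \tfrac12 \rceil = 3$. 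For the house and the double diamond I would exhibit an explicit small positive semidefinite zero forcing set together with a bound on its propagation time — e.g. for the house (a $C_5$ with one chord, forming a triangle glued to a path), pick the two degree-$3$ vertices or a suitable pair so that $G - S$ splits into isolated vertices and $P_2$'s, giving propagation time $1$ and $\throtplus \le 2 + 1 = 3$; for the double diamond, pick an analogous set of size $2$ realizing propagation time $\le 2$, giving $\throtplus \le 4$. These are routine case checks on fixed small graphs.

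Putting it together: if $G \notin \mathcal{G}$, then either $\alpha(G) \ne 2$, in which case the independence-number argument above shows $\throtplus(G) \in \{n, \text{something} < n-1\}$ (more precisely $\alpha(G) = 1 \Rightarrow \throtplus(G) = n$, and $\alpha(G) \ge 3 \Rightarrow \throtplus(G) \le n - 2$ by Proposition \ref{indepBnd}), so $\throtplus(G) \ne n-1$; or $G$ contains one of the three forbidden induced subgraphs $H$, and then $\throtplus(G) \le n - |V(H)| + \throtplus(H) \le n - 2$ by the computations above, so again $\throtplus(G) \ne n-1$.

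The main obstacle is simply the bookkeeping for the house and double diamond: I need to be sure the chosen starting set is genuinely a positive semidefinite zero forcing set and that the claimed propagation-time bound is correct, which requires tracking the connected components of $G - S$ and verifying the PSD color change rule step by step. There is also a minor subtlety in that $\throtplus(H) \le |V(H)| - 2$ must be a strict enough inequality — equality $\throtplus(G) = n - k + \throtplus(H)$ need not hold, but the inequality from Proposition \ref{noinduce4hi} is all that is needed, so this is not actually an issue. I should also double-check that $\alpha(G) \ge 3$ indeed yields $\throtplus(G) \le n-2$ and not merely $\le n-2$ with possible equality to $n-1$ — but $n - \alpha(G) + 1 \le n - 2$ when $\alpha(G) \ge 3$, so this is immediate.
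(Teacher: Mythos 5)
Your proposal is correct and follows essentially the same route as the paper: rule out $\alpha(G)\ne 2$ via Propositions \ref{indepBnd} and \ref{th+n}, then use Proposition \ref{noinduce4hi} after exhibiting, for each forbidden induced subgraph $H$, a positive semidefinite zero forcing set attaining $\throtplus(H)\le |V(H)|-2$ (the paper simply displays these sets as the blue vertices in Figure \ref{fig:highthrotforbid}). Your explicit choices check out: the two degree-three vertices of the house give propagation time one, and the two degree-four vertices on the shared edge of the double diamond give propagation time two.
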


\begin{proof}
First, we show $\alpha(G)\ne 2$ implies $\throtplus(G)\neq n-1$. If $\alpha(G)=1$, then $G$ is complete, so  $\throtplus(G)=n$ by Proposition \ref{th+n}.
If $\alpha(G)>2$, then $\throtplus(G)\leq n-\alpha(G)+1\leq n-2$ by Proposition \ref{indepBnd}. 

Positive semidefinite zero forcing sets $S$ that attain $\throtplus(H;S)= |V(H)|-2$ where $H$ is  one of $C_5$, the house, or the double diamond are shown as blue vertices in Figure \ref{fig:highthrotforbid}.  Then by Proposition   \ref{noinduce4hi},  $\throtplus(G)\leq n-|V(H)| +(|V(H)|-2)= n-2$. %Assume that $G$ has an induced $C_5$ given by $v_1v_2v_3v_4v_5$. Then the set $\overline{\{v_1,v_3,v_5\}}$ will force all of $G$ in one time step. Therefore, $\throtplus(G)\leq n-2$.
%
%Similarly, assume that $G$ has an induced house subgraph where the $C_5$ is given by $v_1v_2v_3v_4v_5$ and $v_2$ is adjacent to $v_4$. Then $\overline{\{v_1,v_3\,v_5\}}$ will force all of $G$ in one time step. Therefore, $\throtplus(G)\leq n-2$.
%
%Finally, assume that there exists an induced double diamond subgraph where the vertices on the left are $l_1,l_2$, the vertices in the middle are $m_1,m_2$, and the vertices on the right are $r_1,r_2$. Color all vertices other than $l_1,l_2,r_1,r_2$ blue. This forcing set will force the graph in two steps, so  $\throtplus(G)\leq n-2$.  \qedhere
\end{proof}

The next three lemmas will be used together to show that any connected order $n$ graph in $\mathcal G$ has positive semidefinite throttling number equal to $n-1$. %In particular, each lemma highlights the nature in which a white set of vertices in a graph $G\in \mathcal G$ can be forced.

\begin{lem}\label{KUforcetime2}
Suppose $G\in \mathcal{G}$ and $W$ is a set of white vertices such that $G[W]$ consists of two disjoint cliques, at least  one of which is trivial.  Then %it will take more than $|W|-2$ time steps to color all the vertices in $W.$ That is, 
$\ptpm{G}{\overline W}\ge |W|-1$.  
\end{lem}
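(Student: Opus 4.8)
\textbf{Proof plan for Lemma \ref{KUforcetime2}.}

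The plan is to track the positive semidefinite forcing process starting from $S = \overline W$ and argue that in each time step, at most one new vertex of $W$ can be colored blue, which forces the propagation time to be at least $|W| - 1$ (the last step may color the final one or two vertices, so $|W|-1$ rather than $|W|$). Write $W = C \du \{z\}$, where $C$ is a clique and $z$ is the isolated vertex of $G[W]$ (the trivial clique). The key structural input is $\alpha(G) = 2$: since $\{z\} \cup \{c\}$ would be independent for any $c \in C$ not adjacent to $z$, and $C$ is itself a clique, the independence number bound is automatic on $W$; the real force of $\alpha(G)=2$ will be used to control how vertices \emph{outside} $W$ see the components of $G - \overline W$, i.e., to limit how the PSD color change rule can fire.

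First I would set up the bookkeeping: at the start, $G - S = G[W]$ has connected components $C$ (if $C$ is nonempty, it is a single component since it is a clique) and $\{z\}$. A vertex $v \in \overline W$ can force into the component $C$ only if $v$ has exactly one white neighbor in $G[C \cup S]$ — but since $C$ is a clique, once \emph{two} vertices of $C$ are white, no vertex outside can force into $C$ at all; a vertex already in $C$ cannot force a sibling in $C$ for the same reason. So the only way $C$ gets colored is one vertex at a time, each colored by some outside vertex that happens to have a unique white $C$-neighbor. The hard part, and the step I expect to be the main obstacle, is ruling out the scenario where at a single time step one vertex of $C$ \emph{and} the vertex $z$ are colored simultaneously by two different blue vertices — if that were allowed repeatedly we would only get $\ptp \ge \lceil |W|/2 \rceil$ or so, which is too weak. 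This is exactly where the forbidden induced subgraphs ($C_5$, house, double diamond) enter: I would show that if two forces fire simultaneously into the two components in a way that lets the process finish quickly, the witnessing vertices together with their neighbors in $W$ produce one of these forbidden configurations, contradiction.

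Concretely, I would argue as follows. Consider the first time step $t$ at which $z$ becomes blue; say $u \in S_{t-1}$ forces $z$, so $u$'s only white neighbor in its component-augmented subgraph is $z$ — but $\{z\}$ is its own component, so this just says $u \sim z$ and $u$ was blue by step $t-1$. Meanwhile, by time $t$ the clique $C$ must have lost all but at most one of its white vertices to forces, each of which happened at a distinct earlier time step (since a clique admits at most one vertex colored per step, from outside, as argued above), so at least $|C| - 1$ time steps were consumed before $C$ finishes, and $z$ finishes no earlier than... here is where I need care. I would instead count directly: let $t_1 < t_2 < \cdots < t_m$ be the time steps at which vertices of $C$ get colored; since a clique is colored at most one vertex per step, $m = |C|$, so the last vertex of $C$ is colored no earlier than step $|C|$ unless some step colors a $C$-vertex concurrently with $z$. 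If $z$ is colored at some step that \emph{also} colors a vertex of $C$, I extract a forbidden subgraph; otherwise all of $|C| + 1 = |W|$ colorings happen at distinct steps except possibly the very last, giving $\ptp \ge |W| - 1$. The subgraph extraction is the crux: with $u \sim z$, $u$ blue, and $w \in C$ forced at the same step by some $u' \neq u$, I look at $u, u', w, z$ and a couple of vertices of $C$ already colored, use $\alpha(G) = 2$ to force enough adjacencies among $u, u'$ and $C$, and identify a $C_5$, house, or double diamond. I expect this case analysis to split on whether $u \sim u'$, whether $u \in C$'s closed neighborhood, and the size of $C$, mirroring the three forbidden graphs. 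That adjacency/case bookkeeping, powered by $\alpha(G) = 2$ and the three excluded induced subgraphs, is the real content; the rest is routine step-counting.
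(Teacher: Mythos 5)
There is a genuine gap, and it sits exactly at the step you wave through. You assert that because $C$ is a clique, once two vertices of $C$ are white no vertex outside can force into $C$, and hence that $C$ is colored at most one vertex per time step ``for free.'' That is false: the positive semidefinite rule only requires the forcing vertex to have a unique white neighbor \emph{of its own} in the relevant component, so two blue vertices $u'_1,u'_2$, each adjacent to exactly one white vertex of $C$'s component, can force two vertices of $C$ simultaneously. Ruling this out is precisely where $\alpha(G)=2$ and the forbidden subgraphs must be used, and it is the entire content of the paper's proof: if $u'_i\to u_i$ ($i=1,2$) happen at the same step with $u_1,u_2$ in the same white component, then $u'_iu_j\notin E(G)$ for $i\ne j$; the trivial-clique vertex $z$ satisfies $u'_iz\in E(G)$ by $\alpha(G)=2$ (otherwise $\{u'_1,u_2,z\}$ or $\{u'_2,u_1,z\}$ is independent); and then $u_1,u'_1,z,u'_2,u_2$ induce a $C_5$ or a house according as $u'_1u'_2$ is a non-edge or an edge. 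Since $W\setminus\{z\}$ has $|W|-1$ vertices, each now requiring its own time step, $\ptpm{G}{\overline W}\ge |W|-1$ follows immediately; the vertex $z$ never enters the count.

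By contrast, the step you identify as the crux --- forbidding a simultaneous force into $C$ and into $\{z\}$ --- is both unnecessary and unprovable. Unnecessary, because the count above ignores when $z$ is forced. Unprovable, because such simultaneity genuinely occurs for graphs in $\mathcal{G}$: take $V(G)=\{c_1,c_2,z,s_1,s_2\}$ with edges $c_1c_2$, $s_1c_1$, $s_1z$, $s_1s_2$, $s_2c_1$, $s_2c_2$, $s_2z$. One checks $\alpha(G)=2$, and since $G$ has $7$ edges on $5$ vertices it contains no induced $C_5$ ($5$ edges), house ($6$ edges), or double diamond ($6$ vertices), so $G\in\mathcal{G}$; yet with $\overline W=\{s_1,s_2\}$ and $W=\{c_1,c_2\}\cup\{z\}$, at the first time step $s_1$ forces both $c_1$ (its unique white neighbor in the component $\{c_1,c_2\}$) and $z$. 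So no forbidden configuration can be extracted from that scenario, and a proof organized around it cannot close. Redirect the forbidden-subgraph argument to two simultaneous forces into $W\setminus\{z\}$, as above, and the lemma follows.
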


\begin{proof} Let $k=|W|$, $S={\overline W}$,  
%let $W\setminus \{v\}$ be the set of vertices of the nontrivial clique induced by $W$, 
and  $v$ be the vertex of  a trivial clique of $G[W]$. 
For the sake of contradiction, assume that $W':=W\setminus \{v\}$ is forced in less than $k-1$ time steps by $S$. This implies that two vertices of $W'$ are forced simultaneously; that is,  there exist vertices $u_1,u_2\in W'$ such that $u_1,u_2\in S^{(t)}$.   Therefore, there exist $u'_1, u'_2\in S_{t-1}$ such that $u'_i\to u_i$ at time $t$.
Thus, $u'_iu_j\not\in E(G)$ for $i\ne j$. 
Since $\alpha(G)=2$, we also have that $u'_iv\in E(G)$ for $i=1,2$. There are two cases: either $u'_1u'_2\in E(G)$, or $u'_1u'_2\notin E(G)$. The first case implies that there exists an induced house subgraph, and the second case implies there exists an induced $C_5$ (with $u_1,u'_1,v,u'_2,u_2$ clockwise around  the 5-cycle from lower left in Figure \ref{fig:highthrotforbid} for both graphs). Either case is a contradiction, and therefore at least $k-1$ steps are required to force $W'.$  \end{proof}

\begin{lem}\label{KUforcetime3}
Suppose $G\in \mathcal{G}$ and $W$ is a set of white vertices such that $W$ induces two nontrivial disjoint cliques.  Then $\overline W$ will not force $W.$
\end{lem}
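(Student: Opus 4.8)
The plan is to argue by contradiction, in the spirit of the proof of Lemma~\ref{KUforcetime2}. Suppose $S := \overline W$ forces $W$, and write $W = A \cup B$, where $A$ and $B$ are the two cliques, $|A|, |B| \ge 2$. Let $a_1$ be the first vertex of $A$ to turn blue, let $v$ be a vertex that forces it at time $t_A$, and symmetrically let $b_1$ be the first vertex of $B$ to turn blue, forced by $w$ at time $t_B$. Since $a_1$ is adjacent to $v$ it cannot lie in $B$, and since all of $A$ is still white at time $t_A-1$, the vertex $v$ is not in $A$ either; hence $v\in S$, and likewise $w\in S$. The key initial observation is that at time $t_A-1$ the whole clique $A$ is white, so it is precisely the component of $G-S_{t_A-1}$ containing $a_1$, and the positive semidefinite color change rule forces $N(v)\cap A=\{a_1\}$; by the same reasoning $N(w)\cap B=\{b_1\}$.

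Next I would use $\alpha(G)=2$ to enlarge these neighborhoods. Fix $a'\in A\setminus\{a_1\}$; then $v\not\sim a'$, and since $a'$ has no neighbor in $B$, for every $b\in B$ the set $\{a',b,v\}$ can be independent unless $v\sim b$ — so $v$ is adjacent to all of $B$. The symmetric argument (using some $b'\in B\setminus\{b_1\}$, which satisfies $w\not\sim b'$) shows $w$ is adjacent to all of $A$. In particular $v\ne w$, since otherwise $v\sim b'$ would contradict $N(v)\cap B=\{b_1\}$.

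The argument then splits on whether $v\sim w$. If $v\not\sim w$, the five distinct vertices $a_1,a',b_1,v,w$ induce the $5$-cycle $a'-a_1-v-b_1-w-a'$ together with the single chord $a_1w$ — the pairs $a'v$, $a'b_1$, $vw$ being non-edges — i.e.\ an induced house. If instead $v\sim w$, the six distinct vertices $a_1,a',b_1,b',v,w$ induce two diamonds sharing the edge $vw$: the diamond on $\{a_1,a',v,w\}$ is missing $a'v$, the diamond on $\{b_1,b',v,w\}$ is missing $b'w$, and $a_1,a',b_1,b'$ are pairwise non-adjacent except for the clique edges $a_1a'$ and $b_1b'$; this is an induced double diamond. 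Either outcome contradicts $G\in\mathcal{G}$, so $\overline W$ cannot force $W$.

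The step requiring the most care is checking that the vertex sets in the last paragraph induce \emph{exactly} the house, respectively the double diamond — that is, accounting for every non-edge among those vertices (the $A$–$B$ non-edges, $N(v)\cap A=\{a_1\}$, $N(w)\cap B=\{b_1\}$, and the case hypothesis on $vw$) — and anticipating that the dichotomy $v\sim w$ versus $v\not\sim w$ is precisely what selects the two relevant forbidden configurations of Definition~\ref{def:FamilyG}; the $v\sim w$ branch is the reason the double diamond (and not only $C_5$ and the house) must be excluded.
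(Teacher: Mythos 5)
Your proof is correct and follows essentially the same route as the paper's: locate the blue vertices that perform the first force into each clique, use $\alpha(G)=2$ to show each such forcer is adjacent to all of the opposite clique, and then split on whether the two forcers are adjacent to exhibit an induced double diamond or house, respectively. Your version is in fact slightly more careful than the paper's in tracking that the two first forces into $A$ and $B$ need not occur at the same time step and in verifying that the forcers lie in $\overline W$, but the configurations you extract are identical to those in the paper.
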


\begin{proof} For the sake of contradiction, assume that $\overline W$ forces $W.$ Let $W_1$ and $W_2$ denote the vertices of the two cliques induced by $W.$ Let $x,y\in W_1$ and $w,z\in W_2$. Since $\overline W$ forces $W,$ there exists $x'$ such that $x'$ will force $x$ in the first time step. This implies that $x' y\notin E(G)$, and so $x'z, x'w\in E(G)$ because $\alpha(G)=2$.  Similarly, there exists $z'\in \overline W$ such that $z'$ will force $z$ in the first time step and $z'w\notin E(G)$, $z' x,z'y\in E(G)$. Notice that $x'x z'z$ is a $C_4$ subgraph. Now there are two cases: either $z'x'\in E(G)$, or $x' z'\notin E(G)$.  In the first case, $G[\{x,x',w,y,z',z\}]$ is  a double diamond with $x,x',w$ in the top row left to right and $y,z',z$ in the bottom row as shown  in Figure \ref{fig:highthrotforbid}. In the second case, $G[\{x', x,y,z,z'\}]$ is a house  with $x', x,y,z,z'$ clockwise around  the 5-cycle from lower left in Figure \ref{fig:highthrotforbid}. In both cases $G\notin \mathcal G$. 
\end{proof}

\begin{lem}\label{KUforcetimeGen}
Suppose $G\in \mathcal{G}$ and $W$ is a set of white vertices such that $|W|\geq 3$ and $G[W]$ is connected. Then %it will take more than $|W|-2$ time-steps to fill $W$. That is, 
$\ptpm{G}{\overline W}\ge |W|-1$. 
\end{lem}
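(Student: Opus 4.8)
The plan is to show that whenever $G \in \mathcal{G}$, $G[W]$ is connected, and $|W| \ge 3$, forcing $W$ from $\overline{W}$ takes at least $|W|-1$ steps. The strategy is to rule out two simultaneous forces at any time step, since this immediately yields the bound: the target set $W$ has $|W|$ vertices, and if at most one of them can turn blue per step, then $|W|$ steps... wait, actually we want $|W|-1$, so I need to be slightly more careful — I expect the argument to show that if two vertices are ever forced simultaneously then $G$ contains one of the forbidden induced subgraphs, hence at least $|W|$ forces happen one-at-a-time except possibly... Let me instead follow the cleaner route: suppose for contradiction that $\ptpm{G}{\overline W} \le |W|-2$. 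Then somewhere two vertices $u_1, u_2 \in W$ are forced at the same time step $t$, say by $u_1' \to u_1$ and $u_2' \to u_2$ with $u_1', u_2' \in S_{t-1}$.

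First I would record the structural consequences of a simultaneous force, exactly as in the proofs of Lemmas~\ref{KUforcetime2} and \ref{KUforcetime3}: since $u_i'$ forces $u_i$ in the component of the white subgraph containing $u_i$, we get $u_1' u_2 \notin E(G)$ and $u_2' u_1 \notin E(G)$ (the forced vertex is the unique white neighbor, so the other forcing vertex cannot be adjacent to it, at least after reducing to the relevant component). Then I would invoke $\alpha(G) = 2$: the set $\{u_1, u_2\}$ is not independent unless $u_1 u_2 \in E(G)$, and more importantly any independent pair among $\{u_1, u_2, u_1', u_2'\}$ forces adjacencies. The key new difficulty compared to Lemmas~\ref{KUforcetime2} and \ref{KUforcetime3} is that here $G[W]$ is merely connected, not two disjoint cliques, so I cannot directly produce an induced $C_5$, house, or double diamond on five or six vertices.

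The main obstacle, and the heart of the argument, will be handling the connectivity of $G[W]$: I expect to need to walk along a path in $G[W]$ from $u_1$ to $u_2$ and find a short "obstruction" — either the forces $u_1 \to u_2$-type configuration already sits inside an induced forbidden subgraph, or I can use an independent set argument together with $\alpha(G)=2$ to locate three mutually relevant vertices. Concretely, I would take a shortest $u_1$–$u_2$ path in $G[W]$; if it has length $2$ via some $w \in W$, then $w$ is white and adjacent to both $u_1$ and $u_2$, and analyzing adjacencies of $u_1', u_2', w$ should produce a house or $C_5$ (much as in Lemma~\ref{KUforcetime2}, with $w$ playing the role of the trivial-clique vertex $v$); if the path is longer, I would argue that at the time step $t$ the white component containing $u_1$ and $u_2$ forces consistency conditions that either contradict $u_1'$ being a valid forcing vertex (it would have a second white neighbor) or again yield a forbidden subgraph. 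A cleaner alternative, which I would try first, is to reduce to the earlier lemmas: since $\alpha(G)=2$, $W$ itself has independence number at most $2$, so $G[W]$ is a join-like structure; in fact any connected graph with $\alpha \le 2$ on $\ge 3$ vertices contains either two nontrivial cliques covering a large portion or a dominating edge, letting me quote Lemma~\ref{KUforcetime2} or Lemma~\ref{KUforcetime3} after passing to a suitable white subset. The bookkeeping to make "two simultaneous forces $\Rightarrow$ at most $|W|-2$ steps is impossible" into "$\ge |W|-1$ steps" is routine: one simultaneous force saves one step, and a single saved step already contradicts $\le |W|-2$ if we also observe that the first force cannot start before time $1$ and each subsequent step adds at least one vertex of $W$.

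Thus the proof outline is: (1) assume $\ptpm{G}{\overline W} \le |W|-2$, extract a time step with two simultaneous forces $u_1'\to u_1$, $u_2'\to u_2$; (2) derive the non-edges $u_1'u_2, u_2'u_1 \notin E(G)$ from the color change rule restricted to the relevant white component; (3) using $\alpha(G)=2$ and a shortest path between $u_1,u_2$ in the connected graph $G[W]$, produce an induced $C_5$, house, or double diamond — splitting into the cases $u_1 u_2 \in E(G)$ or $\notin E(G)$, and $u_1' u_2' \in E(G)$ or $\notin E(G)$, mirroring Lemmas~\ref{KUforcetime2} and \ref{KUforcetime3} — contradicting $G \in \mathcal{G}$; (4) conclude. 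I anticipate step (3), specifically ruling out long shortest paths in $G[W]$, is where the real work lies, and I would handle it by noting a connected graph with independence number $2$ has diameter at most... in fact diameter at most $3$, and combine that with the forcing constraints to keep the forbidden-subgraph search finite.
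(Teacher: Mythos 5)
Your opening ingredient (a simultaneous force plus $\alpha(G)=2$ yields a forbidden induced subgraph) is the right one, but the proof as outlined has a genuine gap: the blanket claim that two simultaneous forces at \emph{any} time step produce an induced $C_5$, house, or double diamond is false. If only two white vertices $u_1\sim u_2$ remain at the final step, they can be forced simultaneously by $u_1'$ and $u_2'$, and the only structure this yields on $\{u_1,u_2,u_1',u_2'\}$ is an induced $P_4$ or $C_4$ --- neither forbidden, and both realizable in $\mathcal G$ (e.g.\ $C_4$ itself). Every forbidden-subgraph argument in Lemmas \ref{KUforcetime2} and \ref{KUforcetime3} needs a \emph{third} white vertex to complete the five- or six-vertex configuration. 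So ``no step ever has two forces'' is not provable, and note also that your bookkeeping points the wrong way: $\ptpm{G}{\overline W}\le |W|-2$ forces \emph{two} units of excess, so a single double force does not by itself give the contradiction. A second unresolved problem is that at a later time step the remaining white set need not, a priori, be connected, so the non-edges $u_1'u_2,\,u_2'u_1\notin E(G)$ are only available componentwise --- exactly the difficulty you flag but never dispose of. Your fallback of quoting Lemmas \ref{KUforcetime2} and \ref{KUforcetime3} does not close this either, since those lemmas require the white set to induce a disjoint union of cliques, which a connected graph with independence number $2$ need not be.

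The paper's proof supplies the two pieces your outline is missing. First, it analyzes only the \emph{first} time step, where connectedness of $G[W]$ guarantees each forcing vertex $u_i'$ has no white neighbor besides $u_i$ at all; choosing any third white vertex $u_3$ and applying $\alpha(G)=2$ then pins down enough adjacencies among $u_1,u_2,u_3,u_1',u_2'$ to exhibit a $C_5$ or a house, so exactly one vertex $u_1$ is forced at step one. Second, it shows the white set \emph{stays} connected: if $W\setminus\{u_1\}$ split into two components containing $u_2$ and $u_3$, then $\{u_1',u_2,u_3\}$ would be independent, contradicting $\alpha(G)=2$. With connectivity preserved, induction on $|W|$ with base case $|W|=3$ finishes the proof, and the base case quietly absorbs the legitimate final double force. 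To salvage your approach you would need to prove precisely these two facts (no double force while at least three connected white vertices remain, and preservation of connectivity), at which point you have reconstructed the paper's induction.
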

%{\red [Proof is reorganized and in addition has math edits flagged in {\blu blue}]}
\begin{proof}  Let $W=\{u_1,\dots,u_{k}\}$ with $k\ge 3$. 
First we show that  it is not possible to have two vertices forced at the first time step.  For the sake of contradiction, assume first that $u_1,u_2$ are simultaneously forced in the first time step. Then there exist blue vertices $u'_1,u'_2$ such that $u'_iu_j\in E(G)$ if and only if $i=j$. 
Notice that $\{u_3,u'_1\}$, $\{u_1,u'_2\}$, $\{u_2,u'_1\}$ form independent sets.  Since $\alpha(G)=2$, $u'_1u'_2, u_1u_3,u_2u_3\in E(G)$. Now there are two cases, either $u_1u_2\not\in E(G)$ or $u_1u_2\in E(G)$. In the first case, we have an induced $C_5$, and in the second case we have an induced house subgraph (with $u'_1,u_1,u_3,u_2,u'_2$ clockwise around  the 5-cycle from lower left in Figure \ref{fig:highthrotforbid}). Both cases lead  to a contradiction. 

Now the proof proceeds by induction on $k=|W|$.  The fact that two forces cannot be performed in the first time step implies that for $k=3$ we have $\ptpm{G}{\overline W}\ge 2=|W|-1$.
We assume that  $\ptpm{G}{\overline W}\ge |W|-1$ for $|W|\leq k-1$, and show $\ptpm{G}{\overline W}\ge |W|-1$ for $|W|= k$. Exactly one vertex  is forced in the first time step; without loss of generality assume the vertex forced first is $u_1$. Let  $u'_1$ be the blue vertex  that forces $u_1$.  In order to apply the induction hypothesis, we must show that $W\setminus \{u_1\}$ induces a connected subgraph. For the sake of contradiction, suppose that $W\setminus\{u_1\}$ induces at least two components. This implies that there exist vertices $u_2,u_3\in W\setminus\{u_1\}$, each in a different component. Notice that $u'_1$ is not adjacent to $u_2,u_3$ by positive semidefinite zero forcing rules. Therefore, $\{u'_1,u_2,u_3\}$ is an independent set. This is a contradiction. Therefore, $W\setminus\{u_1\}$ induces a connected component of white vertices. By the induction hypothesis, it will take at least $|W|-2$ time steps to force $W\setminus\{u_1\}$. Therefore, it takes at least $|W|-1$ time steps to force $W$.
\end{proof}

\begin{thm}\label{th+n-1}
Let $G$ be a connected graph on $n$ vertices. Then $G\in\mathcal{G}$ if and only if $\throtplus(G)\geq n-1$.
\end{thm}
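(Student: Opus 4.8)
The forward implication $G\in\mathcal G\Rightarrow\throtplus(G)\ge n-1$ is where Lemmas \ref{KUforcetime2}--\ref{KUforcetimeGen} get used; the reverse implication is essentially the contrapositive of Proposition \ref{th+n-1Suf} (with Proposition \ref{th+n} accounting for the complete graphs), so the plan is to concentrate on the forward direction. Fix a connected $G\in\mathcal G$ of order $n$ and a positive semidefinite zero forcing set $S$ realizing $\throtplus(G;S)=\throtplus(G)$. Put $W=\overline S$ and $k=|W|$, so $|S|=n-k$; it suffices to show $\ptp(G;S)\ge k-1$, since then $\throtplus(G)=|S|+\ptp(G;S)\ge(n-k)+(k-1)=n-1$. (Proposition \ref{indepBnd} gives the reverse inequality $\throtplus(G)\le n-\alpha(G)+1=n-1$, so in fact equality holds, matching the surrounding discussion.)

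The small cases are immediate. If $k=0$ then $|S|=n$; if $k\in\{1,2\}$ then $\ptp(G;S)\ge 1\ge k-1$ because $W\ne\emptyset$ forces at least one time step. So assume $k\ge 3$. Since $\alpha(G)=2$, any set obtained by choosing one vertex from each component of $G[W]$ is independent in $G$, so $G[W]$ has at most two components. If $G[W]$ has two components $C_1,C_2$, then each is a clique: were $C_1$ (say) to contain nonadjacent vertices $a,b$, then for any $c\in C_2$ the set $\{a,b,c\}$ would be independent, contradicting $\alpha(G)=2$.

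Now split into the three cases that match the three lemmas. If $G[W]$ is connected, then $k\ge 3$ lets us apply Lemma \ref{KUforcetimeGen} to get $\ptp(G;S)=\ptpm{G}{\overline W}\ge k-1$. If $G[W]$ has two components and at least one is trivial, Lemma \ref{KUforcetime2} gives $\ptp(G;S)\ge k-1$. If $G[W]$ has two components, both nontrivial cliques, then Lemma \ref{KUforcetime3} asserts that $\overline W=S$ does not force $W$; but $S$ is a positive semidefinite zero forcing set, so this configuration cannot occur. Since two trivial components would force $k=2$, these cases are exhaustive for $k\ge 3$, so $\ptp(G;S)\ge k-1$ in every case, which completes the forward direction.

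All of the genuinely hard content is already packaged in Lemmas \ref{KUforcetime2}--\ref{KUforcetimeGen}, so what remains is bookkeeping: confirming that $G[W]$ has at most two components and that two components must be cliques, invoking Lemma \ref{KUforcetime3} correctly to excise the two-nontrivial-clique configuration by appealing to the hypothesis that $S$ is a forcing set, and checking that the cases $k\le 2$ and $k\ge 3$ together cover everything. I do not foresee a real obstacle beyond ensuring the case split is exhaustive and dovetails exactly with the hypotheses of the three lemmas.
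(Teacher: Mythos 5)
Your proposal is correct and takes essentially the same route as the paper's proof: the forward direction is the same case split on the components of $G[\overline S]$ (at most two components since $\alpha(G)=2$, each a clique; Lemma \ref{KUforcetime3} excludes two nontrivial cliques, Lemma \ref{KUforcetime2} handles one trivial component, Lemma \ref{KUforcetimeGen} handles the connected case with $|\overline S|\ge 3$, and $|\overline S|\le 2$ is immediate), and the converse is delegated to Proposition \ref{th+n-1Suf} exactly as the paper does. Your write-up is in fact slightly more explicit than the paper's about why the cases are exhaustive.
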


\begin{proof}
Assume  $G\in\mathcal{G}$ and $S$ is a positive semidefinite zero forcing set.   If $G-S$ is disconnected, then there are at most two components and each is a clique because $\alpha(G)=2$.  If $G-S$ is disconnected, then  Lemma \ref{KUforcetime3} implies at least one component must be trivial, and Lemma \ref{KUforcetime2} implies $\throtplus(G;S)\ge n-1$. If $G-S$ is connected and $|\OL S|\ge 3$, then Lemma \ref{KUforcetimeGen} implies $\throtplus(G;S)\ge n-1$. Clearly $\throtplus(G;S) \ge n-1$ for  $|\OL S|\le 2$.   %The observation follows from the fact that if an edge is missing in a component, then we can find an independent set of size 3.  
The converse was proved in Proposition \ref{th+n-1Suf}.  \end{proof}

%%%%%%%%%%%%%%%%%%%%%%%%%%%%%%%%%%%%%%%%%%%
\section{Weighted positive semidefinite throttling}\label{sweight} 
%%%%%%%%%%%%%%%%%%%%%%%%%%%%%%%%

In \cite{BY13throttling}, Butler and Young also considered the effect of minimizing a weighted sum  $a|S| + b\pt(G,S)$ over $S\subseteq V(G)$ for fixed $a,b>0$.  
In this section we discuss the effect of  weighting on some  of our previous results.  
We observe that minimizing $a|S|+b\ptp(G;S)$ is equivalent to considering the fixed multiple $b$ times the weighted sum  $\omega|S|+\ptp(G;S)$ for $\omega:=\frac a b >0$.  Minimizing $\omega|S|+\ptp(G;S)$ determines the same optimizing sets $S$ as minimizing $a|S|+b\ptp(G;S)$, and we minimize this version because  it is notationally more convenient.
For $\omega >0$, define \[\throtplus^{\omega}(G)=\min_{S} \{\omega |S| + \ptp(G;S) \}.\]

The proof of the next result is analogous to the proof of Proposition \ref{d2bound}, and is omitted.  

\begin{prop}\label{d2boundwt}
Suppose $G$ is a graph with $\Delta(G) = 2$.   Then %, and fix positive real numbers $a$ and $b$. %Then for all $S\subseteq V(G)$,
 \[ \throtplus^{\omega}(G)\geq \left \lceil \sqrt{2\omega n}-\frac{1}{2} \right \rceil.\]
\end{prop}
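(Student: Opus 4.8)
The plan is to transcribe the proof of Proposition~\ref{d2bound} essentially word for word, replacing the objective $|S|+\ptp(G;S)$ by the weighted objective $\omega|S|+\ptp(G;S)$; the only ingredient needed is the $\Delta(G)=2$ case of Lemma~\ref{constraint}.

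First I would fix a positive semidefinite zero forcing set $S$ and set $s=|S|$ and $p=\ptp(G;S)$. Lemma~\ref{constraint} gives $n\le s(1+2p)$, that is, $p\ge\frac12\!\left(\frac ns-1\right)$. Relaxing to nonnegative reals, for a fixed $s$ the smallest admissible value of $\omega s+p$ is $g_\omega(s):=\omega s+\frac12\!\left(\frac ns-1\right)$, attained by taking $p=\frac12\!\left(\frac ns-1\right)$.

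Next I would minimize $g_\omega$ over $s>0$ by single-variable calculus: $g_\omega'(s)=\omega-\frac{n}{2s^2}$ vanishes only at $s^\ast=\sqrt{n/(2\omega)}$, and $g_\omega''>0$, so $s^\ast$ is the global minimizer. Since $\omega s^\ast=\sqrt{\omega n/2}$, $\;n/s^\ast=\sqrt{2\omega n}$, and $\sqrt{\omega n/2}=\frac12\sqrt{2\omega n}$, I obtain
\[
g_\omega(s^\ast)=\sqrt{\tfrac{\omega n}{2}}+\tfrac12\sqrt{2\omega n}-\tfrac12=\sqrt{2\omega n}-\tfrac12,
\]
which is exactly the $\omega=1$ computation in Proposition~\ref{d2bound}. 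Hence $\omega|S|+\ptp(G;S)\ge\sqrt{2\omega n}-\frac12$ for every positive semidefinite zero forcing set $S$, and rounding up as in Proposition~\ref{d2bound} gives $\throtplus^{\omega}(G)\ge\left\lceil\sqrt{2\omega n}-\frac12\right\rceil$.

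There is no real obstacle here; the argument is a routine re-run of Proposition~\ref{d2bound}, and the only place to be slightly careful is the algebraic collapse of $g_\omega(s^\ast)$ to the clean form $\sqrt{2\omega n}-\frac12$. One mild subtlety worth noting is that the closing ceiling step is the analogue of the unweighted one: for integer-valued objectives the inequality ``$\ge c$'' upgrades to ``$\ge\lceil c\rceil$'', whereas for a general real weight $\omega$ the bound one genuinely establishes before rounding is $\throtplus^{\omega}(G)\ge\sqrt{2\omega n}-\frac12$.
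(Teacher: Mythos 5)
Your argument is exactly the proof the paper intends: the paper omits the proof of Proposition \ref{d2boundwt}, stating only that it is analogous to that of Proposition \ref{d2bound}, and your computation is precisely that analogue, with the algebra collapsing $g_\omega(s^\ast)$ to $\sqrt{2\omega n}-\tfrac12$ correctly. Your closing caveat about the ceiling is also well taken: for non-integer $\omega$ the objective $\omega|S|+\ptp(G;S)$ need not be an integer, so the final rounding step only literally follows when $\omega\in\ZZ$, and otherwise the bound genuinely established is $\throtplus^{\omega}(G)\geq\sqrt{2\omega n}-\tfrac12$.
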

%\bpf For  a positive semidefinite zero forcing set $S$, let $s:=|S|$ and $p:=\ptp(G;S)$.  Then we have  \[\sum_{t=0}^{p - 1} \Delta(G) \cdot (\Delta(G) - 1)^t = \sum_{t=0}^{p-1} 2 = 2p.\]  So the constraint in Lemma \ref{constraint} is equivalent to  $ n \leq s(1+2p). $ So we want to minimize the value of $as + bp$ subject to $n \leq s(1+2p)$. If we allow $s$ and $p$ to be nonnegative real numbers, then for a fixed $0 \leq s \in \R$ we have $p \geq \frac 1 2 \left(\frac n s -1\right)$. So for a fixed $0 \leq s \in \mathbb{R}$, the minimum value of $s+p$ subject to $s(1+2p)$ is 
%\[as+\frac b 2 \left(\frac n s -1\right)\!,\mbox{ achieved by using }p(s):= \frac 1 2 \left(\frac n s -1\right)\mbox{ as the value for }p.\] For each $s \geq 0$ define $g(s) := as+\frac b 2 \left(\frac n s -1\right)$. Then $g'(s) = a - \frac {bn} {2s^2}.$ Thus $g'(s) = 0$ and $s\ge 0$ imply  $s=\sqrt{\frac{bn}{2a}}.$ For $s = \sqrt{\frac{bn}{2a}}$,  \[as+bp = a\sqrt{\frac{bn}{2a}}+ \frac b 2 \left(n\sqrt{\frac{2a}{bn}} -1\right)=\sqrt{2abn}-\frac b 2.\]  Since $\sqrt{2abn}-\frac b 2$ is the minimum value of $as+bp$ for $0 \leq s,p \in \mathbb{R}$ and $n \leq s(1+2p)$, adding the constraint that $s$ and $p$ are integers with $p \geq 0$ and $s \geq 1$ still gives the bound $g(s) \geq  \sqrt{2abn}-\frac b 2 $, and thus $g(s) \geq  \lc \sqrt{2abn}-\frac b 2 \rc$. \epf 

\begin{thm}\label{thmDeltaGreater2wt}
Let $G$ be a  graph of order $n$, $\Delta:=\Delta(G)\geq 3$, and $s_0:=\Z_+(G)$.  For $\omega \ge \frac{1}{\ln(\Delta - 1)}\left( \frac{(\Delta -2)n}{s_0(\Delta - 2)n + 2s_0^2} \right)$, 
\[ \throtplus^\omega(G)\ge \lc \omega s_0 + \log_{(\Delta - 1)}\left(\frac{(\Delta-2)n + 2s_0}{\Delta s_0}\right)\rc.\]
%\[\throtplus^\omega(G) \geq \begin{cases} ? & \text{if } \omega<1 \\ \lc \omega s_0 + \log_{(\Delta(G) - 1)}\left(\frac{(\Delta(G)-2)n + 2s_0}{\Delta(G)s_0}\right)\rc. & \text{if } \omega\ge 1.  \end{cases}. \] 
The hypothesis $\omega \ge \frac{1}{\ln(\Delta - 1)}\left( \frac{(\Delta -2)n}{s_0(\Delta - 2)n + 2s_0^2} \right)$ is satisfied for  $\omega\ge 1$ and for $\omega \ge \frac{1}{s_0\ln(\Delta - 1)}$. 

%For all graphs graph $G$ with maximum degree $\Delta$ and positive semidefinite zero forcing number $s_0$ 
If  $\omega < \frac{1}{\ln(\Delta - 1)}\left( \frac{(\Delta -2)n}{s_0(\Delta - 2)n + 2s_0^2} \right)$, then
\[ \throtplus^\omega(G)
= \min\left\{\omega s+ \frac{\ln\left(  \frac{(\Delta - 2)n + 2s}{\Delta s}\right)}{\ln(\Delta - 1)}:s=s_0,s_0+1,\dots, \lc \frac{1}{\omega\ln(\Delta - 1)}\rc\right\}.\] \end{thm}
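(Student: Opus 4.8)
The plan is to mimic the proof of Theorem \ref{thmDeltaGreater2}, tracking the effect of the weight $\omega$ on where the minimum of the relaxed objective occurs. As before, write $s=|S|$, $p=\ptp(G;S)$, $\Delta=\Delta(G)$, $s_0=\Z_+(G)$. By Lemma \ref{constraint}, for $\Delta\ge 3$ we have the constraint $n\le s\bigl(1+\frac{\Delta(\Delta-1)^p-\Delta}{\Delta-2}\bigr)$, which (exactly as in the chain of equivalences leading to \eqref{pconstraint} in the proof of Theorem \ref{thmDeltaGreater2}) is equivalent to $p\ge p(s):=\frac{\ln\left(\frac{(\Delta-2)n+2s}{\Delta s}\right)}{\ln(\Delta-1)}$. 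So over real $s\ge s_0$, $p\ge 0$, the relaxed minimum of $\omega s+p$ subject to the constraint is $\min_{s\ge s_0}\bigl(\omega s+p(s)\bigr)$, and since every genuine PSD zero forcing set has $|S|\ge s_0$, we get $\throtplus^\omega(G)\ge\min_{s\ge s_0}\bigl(\omega s+p(s)\bigr)$; the integrality of $s,p$ lets us take the ceiling at the very end, just as in Propositions \ref{d2bound} and the proof of Theorem \ref{thmDeltaGreater2}.

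The heart of the argument is locating $\min_{s\ge s_0}\bigl(\omega s+p(s)\bigr)$. Differentiating, $\frac{d}{ds}\bigl(\omega s+p(s)\bigr)=\omega-\frac{1}{\ln(\Delta-1)}\left(\frac{(\Delta-2)n}{s(\Delta-2)n+2s^2}\right)$, reusing the derivative computation \eqref{dertiv} with the constant $1$ replaced by $\omega$. The function $s\mapsto\frac{(\Delta-2)n}{s(\Delta-2)n+2s^2}$ is strictly decreasing in $s>0$, so $\frac{d}{ds}\bigl(\omega s+p(s)\bigr)$ is increasing in $s$; hence $\omega s+p(s)$ is convex on $s\ge s_0$, and its minimum over $s\ge s_0$ is either at $s=s_0$ (if the derivative is already $\ge 0$ there) or at the unique critical point. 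The derivative at $s=s_0$ is nonnegative precisely when $\omega\ge\frac{1}{\ln(\Delta-1)}\left(\frac{(\Delta-2)n}{s_0(\Delta-2)n+2s_0^2}\right)$, which gives the first case: the minimum is $\omega s_0+p(s_0)=\omega s_0+\log_{(\Delta-1)}\left(\frac{(\Delta-2)n+2s_0}{\Delta s_0}\right)$, and taking the ceiling gives the stated bound. For the parenthetical remarks: when $\omega\ge 1$ the bound follows because $\frac{(\Delta-2)n}{s_0(\Delta-2)n+2s_0^2}<\frac{1}{s_0}\le 1\le\ln(\Delta-1)\cdot\omega$ when $\Delta\ge 3$ — wait, $\ln(\Delta-1)$ can be small, so more carefully: $\frac{(\Delta-2)n}{s_0(\Delta-2)n+2s_0^2}=\frac{1}{s_0+\frac{2s_0^2}{(\Delta-2)n}}\le\frac{1}{s_0}$, so it suffices to check $\omega\ge\frac{1}{s_0\ln(\Delta-1)}$, which also covers $\omega\ge 1$ once one notes $s_0\ge 1$ and handles $\Delta=3$ (where $\ln 2<1$) by using instead the sharper bound $\frac{1}{s_0+2s_0^2/n}$ together with $n\ge$ (order lower bound forced by $\Delta\ge 3$) — I would insert whatever elementary inequality makes $\omega\ge 1$ work cleanly, likely just citing that $n$ is large enough relative to $s_0$, or simply stating both sufficient conditions and verifying $\omega\ge 1\Rightarrow\omega\ge\frac{1}{s_0\ln(\Delta-1)}$ fails for $\Delta=3,s_0=1$ and therefore the $\omega\ge 1$ claim needs its own one-line check via $n/(n+2)<\ln 2\cdot$ something analogous to the $\Delta=3$ computation in Theorem \ref{thmDeltaGreater2}.

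In the complementary case $\omega<\frac{1}{\ln(\Delta-1)}\left(\frac{(\Delta-2)n}{s_0(\Delta-2)n+2s_0^2}\right)$, the derivative at $s_0$ is negative, so by convexity the real minimum is at the critical point $s^*$ where $\omega=\frac{1}{\ln(\Delta-1)}\cdot\frac{(\Delta-2)n}{s(\Delta-2)n+2s^2}$; solving, $s(\Delta-2)n+2s^2=\frac{(\Delta-2)n}{\omega\ln(\Delta-1)}$, so $s^*<\frac{1}{\omega\ln(\Delta-1)}$ (dropping the positive $2s^2$ term from the left side). Thus the integer minimizer lies in $\{s_0,s_0+1,\dots,\lceil\frac{1}{\omega\ln(\Delta-1)}\rceil\}$: below $s_0$ is disallowed, and beyond $\lceil\frac{1}{\omega\ln(\Delta-1)}\rceil\ge s^*$ the function $\omega s+p(s)$ is increasing. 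Here I claim \emph{equality} rather than an inequality; this requires that for each integer $s$ in that range one can actually realize a PSD zero forcing set of size $s$ with propagation time meeting $p(s)$ — but in fact the claimed identity is just asserting that $\throtplus^\omega(G)$ equals the minimum over that finite set of the \emph{relaxed} values $\omega s+p(s)$, which is false in general as an identity; so I would either weaken it to $\ge$, or — matching the paper's evident intent — interpret it as: the optimal integer $s$ (the size of a throttling-optimal PSD forcing set) lies in that finite range, hence $\throtplus^\omega(G)$ is computed by checking those finitely many values of $s$ against the true attainable propagation times. The main obstacle is precisely this last point: justifying that the finite search over $s\in\{s_0,\dots,\lceil 1/(\omega\ln(\Delta-1))\rceil\}$ suffices, i.e. that no optimal $S$ has $|S|$ outside this range — which follows from the convexity/monotonicity argument above applied to the \emph{lower bound} $\omega|S|+p(|S|)\le\omega|S|+\ptp(G;S)$, since once $\omega|S|+p(|S|)$ exceeds the best value found in the range, $S$ cannot be optimal. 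I would write that carefully, and leave the routine algebra of the equivalences and the derivative (already done in the proof of Theorem \ref{thmDeltaGreater2}) as "by the same computation as in the proof of Theorem \ref{thmDeltaGreater2}."
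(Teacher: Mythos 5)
Your proposal takes essentially the same route as the paper's proof: invoke Lemma \ref{constraint} to get $p\ge p(s)=\log_{(\Delta-1)}\!\left(\frac{(\Delta-2)n+2s}{\Delta s}\right)$, differentiate $\omega s+p(s)$ exactly as in \eqref{dertiv} with the leading $1$ replaced by $\omega$, and split on the sign of the derivative at $s=s_0$; the first case and the two sufficient conditions on $\omega$ are handled the same way. Where you go beyond the paper is in the second case, and there your extra care is warranted: the paper's entire treatment is the sentence ``it suffices to minimize over all $s$ between $s_0$ and $\lceil\frac{1}{s_0\ln(\Delta-1)}\rceil$,'' which does not even match the range $\lceil\frac{1}{\omega\ln(\Delta-1)}\rceil$ appearing in the statement, whereas your convexity argument and the bound $s^*<\frac{1}{\omega\ln(\Delta-1)}$ on the critical point (obtained by dropping the $2s^2$ term) is the derivation consistent with the statement. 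Both reservations you raise are genuine. The displayed ``$=$'' in the second case can only be correct if read as a lower bound together with a reduction to a finite search, since the relaxed values $\omega s+p(s)$ need not be attained by actual PSD zero forcing sets; weakening to $\ge$, as you suggest, is the honest fix. And the claim that $\omega\ge 1$ satisfies the hypothesis is not immediate for $\Delta=3$, $s_0=1$, where the threshold is $\frac{1}{\ln 2}\cdot\frac{n}{n+2}>1$ once $n\ge 5$; the paper's appeal to Theorem \ref{thmDeltaGreater2} glosses over the fact that there the case $\Delta=3$, $s=1$ was settled by directly comparing $1+p(1)$ with $2+p(2)$ rather than by a sign condition on the derivative at $s_0$, so a separate one-line check (or that same direct comparison) is indeed needed, as you indicate.
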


\bpf  We follow the proof of Theorem \ref{thmDeltaGreater2} and let $s$ and $p$ denote $|S|$ and $\ptp(G;S)$,  respectively. Then the constraint in Lemma \ref{constraint} gives  $\displaystyle p \ge p(s): =\log_{(\Delta - 1)}\left(\frac{(\Delta-2)n + 2s}{\Delta s}\right)$.  Taking the derivative   with respect to $s$ gives $\frac{d}{ds}(\omega s+p(s))=
\omega - \frac{1}{\ln(\Delta - 1)}\left( \frac{(\Delta -2)n}{s(\Delta - 2)n + 2s^2} \right)$.  Since $\omega \ge \frac{1}{\ln(\Delta - 1)}\left( \frac{(\Delta -2)n}{s_0(\Delta - 2)n + 2s_0^2} \right)$ implies $\frac{d}{ds}(\omega s+p(s))\ge 0$, in this case $\omega s+p(s)$ is increasing for $s\ge s_0$, and
\[\throtplus^\omega(G)\ge \lc \omega s_0 + \log_{(\Delta - 1)}\left(\frac{(\Delta-2)n + 2s_0}{\Delta s_0}\right)\rc.\] %\label{DeltaBigLB}\eeq

As shown in Theorem \ref{thmDeltaGreater2}, $\omega \ge \frac{1}{\ln(\Delta - 1)}\left( \frac{(\Delta -2)n}{s_0(\Delta - 2)n + 2s_0^2} \right)$ is true for all $\omega\ge 1$.  More generally,
\[ \frac{1}{\ln(\Delta - 1)}\left( \frac{(\Delta -2)n}{s_0(\Delta - 2)n + 2s_0^2} \right) \le  \frac{1}{\ln(\Delta - 1)}\left( \frac{(\Delta -2)n}{s_0(\Delta - 2)n}\right)=\frac{1}{s_0\ln(\Delta - 1)},\] 
so $\omega \ge \frac{1}{\ln(\Delta - 1)}\left( \frac{(\Delta -2)n}{s_0(\Delta - 2)n + 2s_0^2} \right)$ %\eqref{DeltaBigLB} 
is true whenever $\omega \ge \frac{1}{s_0\ln(\Delta - 1)}$.    Finally, when $\omega < \frac{1}{s_0\ln(\Delta - 1)}$, it suffices to minimize over all $s$ between $s_0$ and $\lc\frac{1}{s_0\ln(\Delta - 1)}\rc$.  
%For the case $\omega<1$, 
%The case $\omega\ge 1$ follows from the facts that $\throtplus(G)$ is achieved by choosing a minimum positive semidefinite zero forcing set of minimum positive semidefinite propagation time (see Theorem \ref{thmDeltaGreater2}) and Remark \ref{lowwt}.  
\epf

\begin{rem}\label{lowwt} We note that $\omega<1$ favors larger positive semidefinite zero forcing sets, whereas $\omega>1$ favors smaller positive semidefinite zero forcing sets, but we cannot go below $\Z_+(G)$.  Thus, whenever $\omega \ge 1$ and the positive semidefinite throttling number is achieved by a set $S$ with $|S|=\Z_+(G)$, 
\[\throtplus^\omega(G)=\omega \Z_+(G) +\ptp(G).\]
\end{rem}

%Thus as $\omega$ increases, this favors choosing vertices that increase the branching of the forcing trees as long as $S$ continues to be a PSD zero forcing set. 

 Weighted versions of the results for paths and cycles presented in Section \ref{pathsAndCycles} can be obtained from Proposition \ref{d2boundwt} together with modified snaking.

%%%%%%%%%%%%%%%%%%%%%%%%%%%%%%%%%%%%%%%%%%%%%

%%%%%%%%%%%%%%%%%%%%%%%%%%%%%%%%%%%%%%%%%%%
\section{Throttling for standard zero forcing}\label{StndTh} 
%%%%%%%%%%%%%%%%%%%%%%%%%%%%%%%%
In \cite{BY13throttling}, the standard throttling number of a path is constructed by snaking the path  in a rectangle with height $|S|$ and width $\pt(G,S)+1$ and then minimizing throttling subject to the constraint \eqref{stdconst}.
We will show that the standard throttling number for a cycle on $n$ vertices is the same as the standard throttling number for a path on $n$ vertices except in the case $n=m^2$ and $m$ is odd, by adapting the construction for the path. %We follow the format of the results in \cite{BY13throttling} rather than that in our Theorem \ref{thmThrC}.

\begin{thm}[Standard throttling for cycles] \label{thmStdThrC}
Let $C_n$ be a cycle on $n$ vertices. Define $m$ to be the largest integer such that $m^2 \leq n$ and $n= m^2 +r$. Then \[\throt(C_n)= \begin{cases}
2m-1 & \mbox{if $r = 0$ and $m$ is even} \\
2m & \mbox{if  $0 < r \leq m$ or $(r = 0$ and $m$ is odd$)$} \\
2m+1 & \text{if } m <r <  2m +1
\end{cases}. \]
\end{thm}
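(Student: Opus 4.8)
The plan is to mirror the argument Butler and Young used for paths, adapting both the lower bound (via the constraint \eqref{stdconst}) and the snaking construction. For the lower bound, I would start from the fact that for any standard zero forcing set $S$ of $C_n$ we have $|S|(\pt(C_n,S)+1)\ge n$, and minimize $|S|+\pt(C_n,S)$ subject to this. Writing $s=|S|$, $p=\pt(C_n,S)$, the continuous relaxation gives $s+p\ge 2\sqrt n-1$, and one checks that the integer minimum of $s+p$ subject to $s(p+1)\ge n$, $s,p$ positive integers, is exactly $2m-1$ if $n=m^2$, $2m$ if $m^2<n\le m^2+m$, and $2m+1$ if $m^2+m<n<(m+1)^2$; this is the same piecewise function as for paths except we must be careful about which values are \emph{attainable}. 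So the only real content beyond \cite{BY13throttling} is (i) showing that when $n=m^2$ with $m$ odd, the value $2m-1$ is \emph{not} attainable on the cycle even though the constraint permits it, and (ii) giving explicit zero forcing sets on $C_n$ realizing the claimed values in all other cases.

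For the constructions I would snake $C_n$ into a rectangle of height $s$ and width $p+1$, exactly as for $P_n$, but now account for the wrap-around edge. On a path, a zero forcing set of size $s$ with propagation time $p$ is obtained by placing blue vertices to cut the path into $s$ subpaths each forced from one end; on a cycle, one blue vertex can instead force in \emph{two} directions, so a set of size $s$ can cut $C_n$ into $s$ arcs each forced from both ends, effectively doubling efficiency in a way that sometimes lets a cycle beat the path — but here the issue is the reverse, since the path bound is already $2\sqrt n-1$. The key observation is that on $C_n$, to get $\pt=p$ with $|S|=s$, the arcs between consecutive blue vertices have length at most $2p$ (forced from both ends) except that having any arc forced from only one end is wasteful; with $s$ blue vertices arranged around the cycle this gives $n\le s\cdot 2p + s = s(2p+1)$ when every arc is two-sided — a \emph{better} constraint than \eqref{stdconst}. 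I would use whichever constraint is relevant: for the matching upper bounds I construct sets directly, and for the lower bound in the $n=m^2$, $m$ odd case I argue from a parity/covering obstruction.

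The case analysis I anticipate: if $0<r\le m$, take roughly $m$ blue vertices spaced so each arc has length $\le 2\lceil\cdot\rceil$, yielding $s+p = m + m = 2m$, matching the path value $2m$ (since $m^2<n\le m^2+m$ forces $\lceil 2\sqrt n-1\rceil = 2m$); if $m<r<2m+1$, one extra blue vertex or one extra time step gives $2m+1$. For $r=0$ and $m$ even, the path already achieves $2m-1$ and $C_n$ can only do as well or better, but the constraint forces $\ge 2m-1$, so equality; I expect the path construction restricted appropriately (or a symmetric cycle construction with $s=m$, $p=m-1$, using two-sided forcing so $s(2p+1)=m(2m-1)\ge m^2$) works. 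The hard part — and the genuinely new point — is showing $\throt(C_n)\ge 2m$ when $n=m^2$ and $m$ is odd. Here $2m-1$ would require $(s,p)$ with $s+p=2m-1$ and $s(p+1)\ge m^2$; the only candidate is $s=p+1=m$... wait, that needs $s=m, p=m-1$, $s+p=2m-1$, and $s(p+1)=m\cdot m=m^2=n$, so \eqref{stdconst} is satisfied with equality. So the obstruction must be finer: equality in $s(p+1)\ge n$ on a cycle forces every one of the $m$ forcing chains to be a maximal path of exactly $m$ vertices perfectly partitioning $V(C_n)$, and on a \emph{cycle} a partition into forcing chains must respect the cyclic structure — I would argue that on $C_n$ the reversal of chains around the cycle creates a parity conflict when $m$ is odd (each blue vertex forces in one direction, so the $m$ chains laid end-to-end around the cycle must ``close up,'' which fails for odd $m$ because the chain endpoints alternate in a way incompatible with an odd number of length-$m$ blocks). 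Making this parity argument precise is the main obstacle; I would formalize it by tracking, for each blue vertex, the direction in which it forces, and showing the induced orientation of the $m$ arcs cannot be consistent around an odd cycle.
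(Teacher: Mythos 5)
Your overall strategy is the same as the paper's: take the lower bound from the universal Butler--Young bound $\throt(G)\ge \lceil 2\sqrt n-1\rceil$ (equivalently from \eqref{stdconst}), match it with snaked constructions on the cycle, and isolate the case $n=m^2$ with $m$ odd as the one place where a finer obstruction is needed; you correctly identify that case as the genuinely new content. However, there is a real error in your constructions. Under the \emph{standard} color change rule a blue vertex forces only when it has exactly one white neighbor, so a blue vertex of $C_n$ with both neighbors white can never force, and it certainly cannot ``force in two directions.'' Consequently a set of pairwise non-adjacent blue vertices on a cycle is not a standard zero forcing set at all, and your construction for the main case $0<r\le m$ (``$m$ blue vertices spaced so each arc has length at most $2\lceil\cdot\rceil$'') fails as stated; you appear to be importing the positive semidefinite rule, under which such spacing is legitimate. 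The repair is to place the blue vertices in adjacent pairs --- equivalently, snake $P_n$ into a rectangle with an \emph{even} number of rows and take the rightmost column, which is what the paper does --- so that each pair forces outward into its two neighboring arcs and the wrap-around edge, being incident only to white vertices, costs nothing. (Also, your ``better constraint'' $n\le s(2p+1)$ is weaker than \eqref{stdconst}, not stronger, and is not needed anywhere.)

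For $n=m^2$ with $m$ odd, your parity idea is the right one but you leave it imprecise, and it closes more simply than orientation-tracking around the cycle. Equality $s(p+1)=n$ with $s=m$ and $p=m-1$ forces every blue vertex to perform a force at \emph{every} time step, in particular at time step one. On a cycle this means every vertex of $S$ has exactly one white neighbor, hence exactly one blue neighbor, so $S$ induces a perfect matching on itself --- impossible when $|S|=m$ is odd. This is exactly the paper's argument. With the adjacent-pairs fix to the constructions and this observation, your outline becomes a correct proof along essentially the same lines as the paper's.
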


\begin{proof}
%Following \cite{BY13throttling}, w
We begin by snaking a path on $n$ vertices in a rectangle with one endpoint in the upper  left and the snake layout as in Figure \ref{PathFig}, but with different dimensions and coloring; the set $S$  consists of the rightmost vertex in each row (this is the construction in \cite{BY13throttling} reversed vertically to match our figure). 
%While it is not stated explicitly in \cite{BY13throttling}, the width of the rectangle can be $m$ for $m^2< n \le  m^2+m$ and $m+1$ for $m^2+m< n \le  m^2+2m$.
%The value of $w$ is $m$ if $r=0$ and $m+1$ otherwise; $h=m$ for $.  The value of   
%\[\min(|S|+\pt(P_n,S)) = \begin{cases}2m-1 & \text{if }r = 0 \\2m & \text{if } 0 < r \leq m \\2m+1 & \text{if } m <r <  2m +1\end{cases}\] where $m$ is the largest integer such that $m^2 \leq n$ and $r$ is an integer such that $n= m^2+r$. 

%Because of the nature of the snake pattern, saying that $|S|$ is even  is equivalent to having no endpoints in $S$. Conversely, saying that $|S|$ is odd  is equivalent to having exactly one endpoint in $S$.  
If no endpoints of the path are in $S$, then adding the edge between the endpoints does not change the propagation time, and we define $S' = S$.  This occurs precisely when there are an even number of rows.  
This is sufficient to show $\throt(C_n)=\throt(P_n)$ in this case because it was shown in \cite{BY13throttling} that $P_n$ realizes the minimum possible throttling number for a graph of order $n$.

So assume $S$ contains exactly one endpoint of the path, and let $s=|S|$ and $p=\pt(P_n,S)$  (so the rectangle has $s$ rows and $p+1$ columns). 
%Since in standard zero forcing we can only force from a blue vertex if it has exactly one white neighbor, it follows that every vertex in $S$ is either an endpoint of $P_n$ or has a blue neighbor.  
Suppose $s > p+1$ (respectively, $s < p+1$).  We re-snake so that there is one more column and one less row in the rectangle (respectively, one less column and one more row).  Since the new rectangle has room for at least as many vertices, the vertices of $P_n$ fit inside the new rectangle. The new standard zero forcing set  $S'$ consists of the vertices in the right column. 
This re-snake increases (respectively, decreases) the standard propagation time by one and decreases (respectively, increases) the cardinality of the standard zero forcing set by one, so it does not change standard throttling. Now, there are no endpoints in $S'$ because $|S'|$ is even, so we can add an edge without changing the propagation time.  

Since $s$ and $p+1$ can be chosen to differ by at most one, performing an analogous re-snake in the case $s=p+1$ will still  allow the vertices of $P_n$ to fit in the rectangle unless $n=m^2$. % and $m$ is odd.   
 If we add the other endpoint of $P_{m^2}$ to $S$ to obtain $S'$, then $\throt(C_{m^2})\le \throt(C_{m^2},S')=2m=\throt(P_{m^2})+1$.  %We show it is not possible to achieve $\throt(C_{m^2})=2m-1$ for $m$ odd.  
 The only way to realize $\throt(C_{m^2},S)=2m-1$ subject to \eqref{stdconst} would be to have $|S|=m$, $\pt(G,S)=m-1$, and at each time step   every vertex that turned blue at the previous time step performs a force.  It is not possible to arrange an odd number of vertices on a cycle in such a way that every blue vertex performs a force at the first time step.
\end{proof}

%Observe that $\throt(C_n)=\lc 2\sqrt n - 1\rc=\throt(P_n)$ unless $n=m^2$ and $m$ is odd.
We will now prove some results similar to Lemma $\ref{trim}$ and Theorem $\ref{subtree}$ for regular zero forcing on trees.

\begin{lem}\label{zfTrim}
Let $T$ be a tree and $v\in V(T)$ be a leaf. Then
\[\throt(T-v)\leq \throt(T).  \]
\end{lem}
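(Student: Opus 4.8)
The statement asserts $\throt(T-v) \le \throt(T)$ for a leaf $v$ of a tree $T$ --- this is the standard zero forcing analog of Lemma \ref{trim}, and I would mirror that proof. The plan is first to establish the standard analog of Lemma \ref{noLeaf}: there is a minimum-throttling set $S$ for $T$ that contains no leaf. Given a set $S'$ realizing $\throt(T)$ with a leaf $\ell \in S'$ whose unique neighbor is $u$, if $u \in S'$ then $S' \setminus \{\ell\}$ is still a zero forcing set (deleting a leaf from a zero forcing set of a tree leaves a zero forcing set, since $\ell$ will be forced by $u$ once $u$'s other neighbors are blue), and removing $\ell$ drops $|S'|$ by one while raising $\pt$ by at most one, so throttling does not increase. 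If $u \notin S'$, replace $\ell$ by $u$: the cardinality is unchanged, $\ell$ still gets forced (now immediately, in step one, by $u$ if $u$ has no other white neighbor, or later), and one checks $\pt$ does not increase --- here I would argue that $u$ forcing is at least as ``efficient'' as $\ell$ forcing since $\ell$'s only possible action was to force $u$. Iterate to remove all leaves from the set.

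Once we have a minimum-throttling set $S$ for $T$ containing no leaves, the second step is immediate: since $v$ is a leaf, $S \subseteq V(T-v)$, and $\pt(T-v, S) \le \pt(T, S)$ because $T-v$ is an induced subgraph and removing $v$ (a pendant, and the only vertex $v$ could ever force is nothing new since $v$ is a leaf that is not in $S$) cannot slow down any force among the remaining vertices --- any forcing chain in $T$ restricted to $T-v$ is still valid, and $v$ being white and pendant never blocks a force in $T-v$. Hence $\throt(T-v) \le |S| + \pt(T-v,S) \le |S| + \pt(T,S) = \throt(T)$.

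\textbf{Main obstacle.} The delicate point is the standard-zero-forcing version of the leaf-swap argument, i.e. verifying $\pt$ does not increase when we swap a leaf $\ell \in S'$ for its neighbor $u$ in the case $u \notin S'$. Unlike PSD forcing, standard forcing is sensitive to exactly which neighbor a vertex forces, so I need to argue carefully that in an optimal force schedule $\ell$ either forces $u$ (its only neighbor) or forces nothing; replacing $\ell$ by $u$ in $S'$ means $u$ is blue from the start, $\ell$ becomes white but will be forced by $u$ at the step $\ell$ would otherwise have acted (and $u$ can still perform whatever other force it was going to do, using $\ell$ as the designated forced neighbor or reassigning), so the propagation time is unchanged. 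I would also need to handle the trivial base cases (when $T$ has at most $2$ or $3$ vertices, or when $T-v$ is a single vertex) separately, as in the proof of Theorem \ref{subtree}.
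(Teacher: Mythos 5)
There is a genuine gap: your key lemma --- a standard-zero-forcing analog of Lemma \ref{noLeaf} asserting that some throttling-optimal set of $T$ contains \emph{no} leaves --- is false, and so are the local steps you use to prove it. For the star $K_{1,n-1}$ with $n\ge 3$, every standard zero forcing set must contain at least $n-2$ leaves (two white leaves sharing the center can never both be forced, since the center is their only neighbor and can force only when it has exactly one white neighbor); the paper itself notes this in the remark following Proposition \ref{th+2}. Concretely, take $K_{1,3}$ with center $u$ and leaves $\ell_1,\ell_2,\ell_3$: the set $S'=\{\ell_1,\ell_2\}$ realizes $\throt(K_{1,3})=4$, but your swap $S''=(S'\setminus\{\ell_1\})\cup\{u\}=\{u,\ell_2\}$ is not a zero forcing set of $K_{1,3}$ at all, since $u$ now has two white neighbors and each vertex forces at most once. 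The same phenomenon breaks your claim that deleting a leaf from a zero forcing set of a tree leaves a zero forcing set. The underlying difference from the PSD case is exactly the one you flag as the ``main obstacle,'' but your proposed resolution (``$u$ can still perform whatever other force it was going to do'') cannot work, because a blue vertex with two or more white neighbors performs no force.

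The repair --- and the paper's actual route --- is to modify $S$ only with respect to the single leaf $v$ being deleted, and to evaluate the modified set as a zero forcing set of $T-v$ rather than of $T$. If $v\notin S$, keep $S$; if $v\in S$ and its neighbor $u\in S$, use $S\setminus\{v\}$; if $v\in S$ and $u\notin S$, use $(S\setminus\{v\})\cup\{u\}$. In each case one shows $\pt(T-v,S')\le\pt(T,S)$ by comparing the processes, and the obstruction above disappears because $v$ is no longer a vertex of the graph, so $u$ never acquires $v$ as an extra white neighbor. Your second step (monotonicity of propagation time under deleting a leaf not in $S$) is fine; it is the global leaf-removal preprocessing that must be abandoned.
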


\begin{proof}
Let  $S$ be a standard zero forcing set of $T$ that realizes $\throt(T)$. %Let $v\in V(T)$ be a leaf of $T$. 
If $v\notin S$, then $v$ is forced in some time step. 
Let $S'=S$. Then $\pt(T-v,S')\leq \pt(T,S)$. If $v\in S$, consider the vertex $u\in N(v)$. If $u\in S$, then $S'=S\setminus\{v\}$ is a standard zero forcing set of $T-v$ and $\pt(T-v,S')=\pt(T,S)$. If $u\notin S$, then $S'=(S\setminus\{v\})\cup\{u\}$ is a standard zero forcing set of $T-v$ and $\pt(T-v,S')\leq \pt(T,S)$. In all cases, $\left|S'\right|\leq \left|S\right|$ and $\pt(T-v,S')\leq \pt(T,S)$. Thus, $\throt(T-v)\leq \throt(T)$.
\end{proof}

The proof of the next theorem is similar to the proof of Theorem \ref{subtree}, replacing Lemma \ref{trim} by Lemma \ref{zfTrim}, and is omitted.
\begin{thm}
If $T$ is a tree with subtree $T'$, then 
\[\throt(T')\leq \throt(T).  \]
That is, standard throttling is subtree monotone for trees.
\end{thm}

%\begin{proof}
%By Lemma \ref{zfTrim}, removing a leaf from a tree cannot increase the throttling number. Repeat this process as necessary.
%\end{proof}

%%%%%%%%%%%%%%%%%%%%%%%%%%%%%%%%%%%%%%
%%%%%%%%%%%%%%%%%%%%%%%%%%%%%%%%%%%%%

\end{document}